\date{} 
\theoremstyle{plain}
\newtheorem{theorem}{Theorem}[section]
\newtheorem{definition}[theorem]{Definition}
\newtheorem{proposition}[theorem]{Proposition}
\newtheorem{lemma}[theorem]{Lemma}
\newtheorem{corollary}[theorem]{Corollary}
\newtheorem{remark}[theorem]{Remark}
\numberwithin{theorem}{section}
\numberwithin{equation}{section}
\numberwithin{figure}{section}
\let\oldtocsection=\tocsection
\let\oldtocsubsection=\tocsubsection
\let\oldtocsubsubsection=\tocsubsubsection
\renewcommand{\tocsection}[2]{\hspace{0em}\oldtocsection{#1}{#2}}
\renewcommand{\tocsubsection}[2]{\hspace{1em}\oldtocsubsection{#1}{#2}}
\renewcommand{\tocsubsubsection}[2]{\hspace{2em}\oldtocsubsubsection{#1}{#2}}
\begin{document}

\parskip=4pt

\vspace*{1cm}
\title[Global well-posedness of a  binary-ternary Boltzmann equation]
{Global well-posedness of a binary-ternary Boltzmann equation}
\author[Ioakeim Ampatzoglou]{Ioakeim Ampatzoglou}
\address{Ioakeim Ampatzoglou,  
Courant Institute of Mathematical Sciences, New York University.}
\email{ioakampa@cims.nyu.edu}

\author[Irene M. Gamba]{Irene M. Gamba}
\address{Irene M. Gamba,  
Department of Mathematics, The University of Texas at Austin.}
\email{gamba@math.utexas.edu}
\author[Nata\v{s}a Pavlovi\'{c}]{Nata\v{s}a Pavlovi\'{c}}
\address{Nata\v{s}a Pavlovi\'{c},  
Department of Mathematics, The University of Texas at Austin.}
\email{natasa@math.utexas.edu}

\author[Maja Taskovi\'{c}]{Maja Taskovi\'{c}}
\address{Maja Taskovi\'{c},  
Department of Mathematics, Emory University.}
\email{maja.taskovic@emory.edu}
\vspace*{-3cm}
\begin{abstract}
In this paper we show global well-posedness near vacuum for the binary-ternary Boltzmann equation.   The binary-ternary Boltzmann equation provides a correction term to the classical Boltzmann equation, taking into account both binary and ternary interactions of particles, and may serve as a more accurate description model for denser gases in non-equilibrium. Well-posedness of the classical Boltzmann equation and, independently, the purely ternary Boltzmann equation follow as special cases. To prove global well-posedness, we use a Kaniel-Shinbrot iteration and related work to approximate the solution of the nonlinear equation by monotone sequences of supersolutions and subsolutions. This analysis required establishing  new convolution type estimates to control the contribution of the ternary collisional operator to the model. We show that the ternary operator allows consideration of softer potentials than  the one  binary operator, consequently our solution to the ternary correction of the Boltzmann equation preserves all the properties of the binary interactions solution. These results are novel for collisional operators of monoatomic gases with either hard or soft potentials that model both binary and ternary interactions. 
\end{abstract}
\maketitle

\section{Introduction}\label{sec:intro}
We study global in time well-posedness near vacuum of the Cauchy problem for an extension of the classical Boltzmann transport equation (BTE) for monoatomic binary interactions gases that includes ternary interactions. This equation, which can be viewed as a model of a denser gas dynamics,  has been recently  introduced by two of the authors in  \cite{ours}, who rigorously derived, from finitely many particle dynamics,  the  purely ternary model for the case of hard potential interactions zone for short times. Moreover, it is seen in \cite{thesis} that the ternary collisional operator derived in \cite{ours} has the same conservation laws and entropy production properties as the classical binary operator, which justifies that the introduced ternary term  can serve as a higher order correction  to the Boltzmann equation.  Such rigorous derivation of the full binary-ternary model  is a work in progress \cite{future AP}. Let us also mention that Maxwell models with multiple particle interactions have been studied in \cite{multiple gamba 2, multiple gamba},  for the space homogeneous case via Fourier Transform methods.

In this paper, we provide the first rigorous analytical result that shows  global in time existence and uniqueness of  {\em mild solutions} near vacuum to the binary-ternary model and the purely ternary model on its own. By mild solutions we mean  that the  $x$-space dependence of the solution is evaluated along the characteristic curves given by the Hamiltonian evolution of the particle system in between collisions  (denoted by $f^{\#}$, being introduced in Subsection \ref{subsec: notation and solution}). The analytical techniques we use are inspired by the works  \cite{kaniel-shinbrot,illner-shinbrot,beto85,to86,to88,pato89} and the more recent work of \cite{alonso,alonso-gamba}. These techniques rely on finding convergent supersolutions and subsolutions to the strong form of \eqref{generalized boltzmann equation} in the associated strong topology of space-velocity Maxwellian weighted in $L^\infty$-functions.

The binary-ternary Boltzmann transport equation we focus on is given by
\begin{equation}
\begin{cases}\label{generalized boltzmann equation}
\partial_tf+v\cdot\nabla_xf=Q_2(f,f)+Q_3(f,f,f),\quad (t,x,v)\in (0,\infty)\times\mathbb{R}^d\times\mathbb{R}^d,\\
f(0)=f_0,\quad (x,v)\in\mathbb{R}^d\times\mathbb{R}^d,
\end{cases}
\end{equation}
and describes the evolution of the probability density $f$ of a dilute gas
 in non-equilibrium in $\mathbb{R}^d$, $d\geq 2$, given an initial condition $f_0:\mathbb{R}^d\times\mathbb{R}^d\to\mathbb{R}$, when both binary and ternary interactions among particles can occur.
 The operator  $Q_2(f,f)$  is the classical binary collisional operator, which expresses  binary  elastic interactions between particles, and is of quadratic order, while the operator $Q_3(f,f,f)$ is the ternary collisional operator which expresses ternary interactions among particles, and is of cubic order. For the exact forms of the operators $Q_2(f,f)$, $Q_3(f,f,f)$ used in this paper, see \eqref{binary kernel}, \eqref{ternary kernel} respectively. We should mention that the purely ternary model, rigorously derived for short times in \cite{ours}, is given by 
\begin{equation}
\begin{cases}\label{intro-ternary}
\partial_tf+v\cdot\nabla_xf=Q_3(f,f,f),\quad (t,x,v)\in (0,\infty)\times\mathbb{R}^d\times\mathbb{R}^d,\\
f(0)=f_0,\quad (x,v)\in\mathbb{R}^d\times\mathbb{R}^d,
\end{cases}
\end{equation}
We refer to \eqref{intro-ternary} as the ternary Boltzmann transport equation.

For the classical Boltzmann transport equation
\begin{equation}\label{intro-binary}
\begin{cases}
\partial_tf+v\cdot\nabla_xf=Q_2(f,f),\quad (t,x,v)\in (0,\infty)\times\mathbb{R}^d\times\mathbb{R}^d,\\
f(0)=f_0,\quad (x,v)\in\mathbb{R}^d\times\mathbb{R}^d,
\end{cases}
\end{equation}
 one way to obtain global well-posedness near vacuum is by utilizing an iterative scheme which constructs monotone sequences of supersolutions and subsolutions  that converge to the global solution of \eqref{intro-binary}. This has been carried out for the first time by Illner and Shinbrot  \cite{illner-shinbrot}, who were motivated by the work of Kaniel and Shinbrot \cite{kaniel-shinbrot}, who in turn showed local in time well-posedness for \eqref{intro-binary} following this program.  Later, this work was extended to include wider range of potentials and to relax assumptions on initial data by Bellomo and Toscani \cite{beto85} , Toscani \cite{to86, to88} and Palczewski and Toscani \cite{pato89}.   Alonso and Gamba \cite{alonso-gamba}  used Kaniel-Shinbrot iteration  to derive distributional and classical solutions to \eqref{intro-binary} for soft potentials for large initial data near two sufficiently close Maxwellians in position and velocity space, while Alonso \cite{alonso} used this technique to study the inelastic Boltzmann equation for hard spheres. Strain \cite{st10} remarks that the estimates he derives can be combined with the Kaniel-Shinbrot iteration to obtain existence of  unique mild solution for the relativistic Boltzmann equation.

Kaniel-Shinbrot iteration is also an important tool for proving non-negativity of solutions, see for example \cite{po88, gl06, chyu08}. Also, when initial data has decay in the direction of $x-v$ as opposed to $x$ and $v$ separately, Kaniel-Shinbrot iteration can be used to construct solutions with infinite energy, see for example \cite{mipe97, zhhu08, wezh14}.

Certain problems have been solved by considering modifications of the Kaniel-Shinbrot iteration. For example, Bellomo and Toscani \cite{beto87} adapted the iteration to the Boltzmann-Enskog equation. Ha, Noh and Yun \cite{hanoyu07}  and Ha and Noh \cite {hano10} also modified the iteration to prove global existence of mild solutions to the Boltzmann system for gas mixtures  in the elastic and the inelastic cases respectively.  Also, Wei and Zhang \cite{wezh06} used another modified iteration to obtain eternal solutions for the Boltzmann equation.

 The goal of this paper is to establish  global existence and uniqueness of a mild solution near vacuum  to  the binary-ternary Boltzmann equation \eqref{generalized boltzmann equation} in spaces of non-negative functions bounded by a Maxwellian.   Moreover, solution of \eqref{intro-ternary} follows as a special case. Inspired by \cite{illner-shinbrot, kaniel-shinbrot, alonso-gamba}, we devise an iterative scheme which constructs monotone sequences of supersolutions and subsolutions to \eqref{generalized boltzmann equation}. For small enough initial data, the beginning condition of the iteration holds globally in time and the two sequences can be shown to converge to the same limit, namely a function $f$ which solves equation \eqref{generalized boltzmann equation} in a mild sense. This strategy requires new ideas given the fact that ternary interactions are also taken into account in \eqref{generalized boltzmann equation}. 
 
 In particular, due to the presence of the ternary correction term,  one  needs to properly adapt the iteration, so that the corresponding supersolutions and subsolutions remain monotone and convergent. One of the main tools is stated in Lemma \ref{convolution lemma} which provides important exponentially weighted convolution estimates. This Lemma not only recovers the estimates developed in  \cite{alonso-gamba} for the binary interaction case, but also develops  a new approach  in order  to treat the ternary interaction case.  Lemma \ref{convolution lemma} is crucially used to obtain uniform in time, space-velocity $L^1$-bounds that control  the ternary gain and loss terms ($L^\infty L^1$ estimates). In fact, using Lemma \ref{convolution lemma} one first obtains asymmetric  estimates (see Lemma \ref{bound on R tilde lemma}) because of the asymmetry introduced by the ternary collisional operator which is not present in the binary case. However, to obtain convergence, it is essential to have symmetry with respect to the inputs of the gain and loss operators. We were able to achieve this symmetrization  in Proposition \ref{permutation lemma}. Finally, we also use Lemma \ref{convolution lemma} to prove  a global estimate for the time average of the gain and loss operators along the characteristics of the Hamiltonian, see Proposition \ref{bounds on operators proposition}. With this, we were able to extend the argument for  controlling  the binary time integrals of both, gain  and loss terms, (see \cite{alonso-gamba}),   to the ternary case by invoking   properties of ternary interactions and a $2d$-analog of  the time  integration  bound for a traveling Maxwellian.

With these tools in hand, for small initial data, the constructed iteration scheme is proved to converge to the unique, global in time mild solution  of \eqref{generalized boltzmann equation}. For more details see Section \ref{sec iteration} and Section \ref{sec: gwp}.
\subsection*{Organization of the paper} In Section \ref{sec: towards the statement}, we review the binary and ternary collisional operators and decompose them into gain and loss forms. We then introduce some necessary notation and state our main result (Theorem \ref{gwp theorem}). In Section \ref{sec: properties of gain and loss}, we prove the convolution estimate and derive essential  bounds for the gain and loss operators. In Section \ref{sec iteration}, we inductively construct monotone sequences of  supersolutions and subsolutions  which are shown to converge to a common limit which solves the binary-ternary Boltzmann equation \eqref{generalized boltzmann equation}, as long as a beginning condition is satisfied. Finally, in Section \ref{sec: gwp} we provide the proof of our main result (Theorem \ref{gwp theorem}).
\subsection*{Acknowledgements} I.A.  acknowledges support  from NSF grants DMS-1516228, DMS-1840314, DMS-2009549 and the Simons Collaboration on Wave Turbulence.  I.M.G.  acknowledges support from NSF grants DMS-1107465 and DMS-1715515.   N.P.  acknowledges support from NSF grants DMS-1516228, DMS-1840314 and DMS-2009549. M.T.   acknowledges support from NSF grant DMS-1107465 and an AMS-Simons Travel Grant.
\section{Towards the statement of the main result}\label{sec: towards the statement}
The goal of this section is to present the precise statement of the main result of this paper. In order to do so, we first review the collisional operators and decompose them to gain and loss form  in Subsection \ref{subsec: collisional}, introduce necessary notation and the notion of a solution in Subsection \ref{subsec: notation and solution}, and then state the main result in Subsection \ref{subsec: statement} (Theorem \ref{gwp theorem}).

\subsection{Collisional operators}\label{subsec: collisional}

\subsubsection{Binary collisional operator}
 The  binary collisional operator is given by
\begin{equation}\label{binary kernel}
Q_2(f,f)(t,x,v)=\int_{\mathbb{S}^{d-1}\times\mathbb{R}^d}B_2(u,\omega)\left(f(t,x,v')f(t,x,v_1')-f(t,x,v)f(t,x,v_1)\right)\,d\omega\,dv_1,
\end{equation} 
where
\begin{equation}\label{binary relative velocity}
u:=v_1-v,
\end{equation}
is the relative velocity of a pair of interacting particles centered at $x, x_1\in\mathbb{R}^d$, with velocities $v,v_1\in\mathbb{R}^d$ before the binary interaction    with respect to the impact direction 
\begin{equation}\label{impact direction binary}
\omega:=\frac{x_1-x}{|x-x_1|}\in\mathbb{S}^{d-1},
\end{equation}
and 
\begin{equation}\label{binary collision formulas}
\begin{aligned}
v':=v+(\omega\cdot u)\omega,\quad v_1':=v_1-(\omega\cdot u)\omega,
\end{aligned}
\end{equation}
are the outgoing velocities after the binary interaction.

 One can easily verify  the binary energy-momentum conservation system is satisfied
\begin{align}
v'+v_1'&=v+v_1,\label{binary cons of momentum}\\
|v'|^2+|v_1'|^2&=|v|^2+|v_1|^2.\label{binary conservation of energy}
\end{align}
Either \eqref{binary collision formulas} or  \eqref{binary cons of momentum}-\eqref{binary conservation of energy} imply
\begin{equation}\label{binary rel vel equation}
|u'|=|u|,\quad\text{where }u':=v_1'-v'.
\end{equation}
In addition,  equation \eqref{binary collision formulas} yields the specular reflection with respect to the impact direction $\omega$
\begin{equation}\label{skewsymmetry for binary}
\omega\cdot u'=-\omega\cdot u.
\end{equation}
In fact it is not hard to show that, given $v,v_1\in\mathbb{R}^d$, expression \eqref{binary collision formulas} gives the general solution of the system \eqref{binary cons of momentum}-\eqref{binary conservation of energy}, parametrized by $\omega\in\mathbb{S}^{d-1}$. The factor $B_2$ in the integrand of \eqref{binary kernel} is  referred as the binary interaction differential cross-section which depends on relative velocity $u$ and the impact direction $\omega$. It expresses the transition probability of binary interactions, and we assume it is of the form
\begin{equation}\label{form of B_2}
B_2(u,\omega)=|u|^{\gamma_2}b_2(\hat{u}\cdot\omega),\quad\gamma_2\in(-d+1,1],
\end{equation}
where $\hat{u}=\displaystyle\frac{u}{|u|}\in\mathbb{S}^{d-1}$ is the relative velocity direction and $b_2:[-1,1]\to [0,\infty)$ is the binary angular transition probability density.   It is worth mentioning that the case $\gamma_2\in (0,1]$ corresponds to hard potentials, the case $\gamma_2\in(-d+1,0)$ corresponds to soft potentials and the case $\gamma_2=0$ corresponds to Maxwell molecules.

We assume  that the binary angular transition probability density $b_2$ satisfies the following properties
\begin{itemize}
\item  $b_2:[-1,1]\to\mathbb{R}$ is a measurable, non-negative probability density.
\item $b_2$ is even i.e.
\begin{equation}\label{binary micro2}
b_2(-z)=b_2(z),\quad\forall z\in[-1,1], 
\end{equation}
which, due to  property from \eqref{skewsymmetry for binary}, yields the binary micro-reversibility condition  
\begin{equation}\label{binary micro}
b_2(\hat{u}'\cdot\omega)=b_2(\hat{u}\cdot\omega),\quad\forall\omega\in\mathbb{S}^{d-1},\quad\forall v,v_1\in\mathbb{R}^d,
\end{equation}
where $\hat{u}'=\displaystyle\frac{u'}{|u|}\in\mathbb{S}^{d-1}$  is the scattering direction.
In addition, relations  \eqref{binary rel vel equation}, \eqref{form of B_2} and \eqref{binary micro} yield
\begin{equation}\label{pre-post bin cross}
B_2(u',\omega)=B_2(u,\omega),\quad\forall\omega\in\mathbb{S}^{d-1},\quad\forall v,v_1\in\mathbb{R}^d.
\end{equation}

\item   The probability density is integrable on the sphere, i.e. for any fixed $\hat{u}$ we have $b_2(\hat{u}\cdot\omega) \in {L^1(\mathbb{S}^{d-1})}$, or equivalently 
 $b_2(z) (1-z^2)^{\frac{d-3}{2}} \in L^1([-1,1])$, for $z=\hat{u}\cdot\omega$,  and 
 \begin{equation}\label{Grad binary}
\|b_2\|_ {L^1(\mathbb{S}^{d-1})} \ = |\mathbb{S}^{d-2}|  \int_{-1}^1|b_2(z)|(1-z^2)^{\frac{d-3}{2}}\,dz<\infty,
\end{equation}
where $|\mathbb{S}^{d-2}| $ is the volume of the $(d-2)$-dimensional sphere.
\end{itemize}

 \begin{remark} The integrability  condition  on $b_2$   is weaker than the classical Grad cut-off assumption which assumes $b_2$ is a bounded function of $z=\hat{u}\cdot\omega$.  So our result is valid for a broader class of angular transition probability measures. 
\end{remark} 
\begin{remark} One can see that the usual hard sphere model   is a special case of the form \eqref{form of B_2} for 
$$\gamma_2=1,\quad b_2(z)=\frac{|z|}{2}.   $$
\end{remark}

\subsubsection{Ternary collisional operator} The ternary collisional operator is given by (see \cite{ours} for details)
\begin{align}
&Q_3(f,f,f)(t,x,v)\nonumber\\
&=\int_{\mathbb{S}^{2d-1}\times\mathbb{R}^{2d}}\hspace{-0.1cm}B_3( \bm{u},\bm{\omega})\left(f(t,x,v,v^*)f(t,x,v,v_1^*)f(t,x,v,v_2^*)-f(t,x,v)f(t,x,v_1)f(t,x,v_2)\right)\,d\omega_1\,d\omega_2\,dv_1\,dv_2,\label{ternary kernel}
\end{align} 
where
\begin{equation}\label{ternary relative velocity}
\bm{u}:=\begin{pmatrix}
v_1-v\\v_2-v
\end{pmatrix}\in\mathbb{R}^{2d},
\end{equation}
is the relative velocity of some colliding particles centered at $x,x_1,x_2\in\mathbb{R}^d$, with velocities $v,v_1,v_2\in\mathbb{R}^d$ before the ternary interaction with respect to the impact directions vector
\begin{equation}\label{impact vector}
\bm{\omega}=\begin{pmatrix}
\omega_1\\\omega_2
\end{pmatrix}:=\frac{1}{\sqrt{|x-x_1|^2+|x-x_2|^2}}\begin{pmatrix}
x_1-x\\
x_2-x
\end{pmatrix}\in\mathbb{S}^{2d-1},
\end{equation}
and
\begin{equation}\label{ternary collision formulas}
\begin{aligned}
v^*&=v+\frac{\omega_1\cdot(v_1-v)+\omega_2\cdot (v_2-v)}{1+\omega_1\cdot \omega_2}(\omega_1+\omega_2),\\
v_1^*&=v_1-\frac{\omega_1\cdot(v_1-v)+\omega_2\cdot (v_2-v)}{1+\omega_1\cdot\omega_2}\omega_1,\\
v_2^*&=v_2-\frac{\omega_1\cdot(v_1-v)+\omega_2\cdot(v_2-v)}{1+\omega_1\cdot\omega_2}\omega_2,
\end{aligned}
\end{equation}
are the outgoing velocities of the particles after the ternary interaction.
It can be easily seen that  if $v^*,v_1^*,v_2^*$ are given by \eqref{ternary collision formulas},  the ternary energy-momentum conservation system 
\begin{align}
v^*+v_1^*+v_2^*&=v+v_1+v_2,\label{ternary cons of momentum}\\
|v^*|^2+|v_1^*|^2+|v_2^*|^2&=|v|^2+|v_1|^2+|v_2|^2,\label{ternary conservation of energy}
\end{align}
is satisfied.
Expressions  \eqref{ternary cons of momentum}-\eqref{ternary conservation of energy} also imply the ternary velocities conservation law
\begin{equation}\label{cons of vel magitude ternary}
|v^*-v_1^*|^2+|v^*-v_2^*|^2+|v_1^*-v_2^*|^2=|v-v_1|^2+|v-v_2|^2+|v_1-v_2|^2,
\end{equation}
For the postcollisional relative velocity, we will write
\begin{equation}\label{ternary post relative velocity}
\bm{u^*}:=\begin{pmatrix}
v_1^*-v^*\\v_2^*-v^*
\end{pmatrix},
\end{equation}
and let us also define the quantities
\begin{align}
|\bm{\widetilde{u}}|&:=\sqrt{|v-v_1|^2+|v-v_2|^2+|v_1-v_2|^2}\label{ternary magnitude pre},\\
|\bm{\widetilde{u}^*}|&:=\sqrt{|v^*-v_1^*|^2+|v^*-v_2^*|^2+|v_1^*-v_2^*|^2}\label{ternary magnitude post}.
\end{align}
Then \eqref{cons of vel magitude ternary} can be written as
\begin{equation}\label{ternary velocities equality}
|\bm{\widetilde{u}}|=|\bm{\widetilde{u}^*}|,
\end{equation}
which is the ternary analog of the binary expression \eqref{binary rel vel equation}.
Defining 
\begin{equation}\label{ternary relative velocities unit}
\bm{\bar{u}}:=\frac{\bm{u}}{|\bm{\widetilde{u}}|},\quad\bm{\bar{u}^*}:=\frac{\bm{u^*}}{|\bm{\widetilde{u}}|},
\end{equation}
equality \eqref{cons of vel magitude ternary} implies 
$\bm{\bar{u}},\bm{\bar{u}^*}\in\mathbb{E}^{2d-1}$,
where 
\begin{equation}\label{ellipsoid}
\mathbb{E}^{2d-1}:=\{(\nu_1,\nu_2)\in\mathbb{R}^{2d}: |\nu_1|^2+|\nu_2|^2+|\nu_1-\nu_2|^2=1\},
\end{equation}
is a $(2d-1)$-dimensional ellipsoid. The vectors $\bm{\bar{u}},\bm{\bar{u}^*}$ are the ternary analogs of the relative velocity direction and the scattering direction of the binary interaction. Because of the assymetry of the ternary interaction they are not unit vectors, they lie on the ellipsoid $\mathbb{E}^{2d-1}$ instead. However, for convenience we will refer to $\bm{\bar{u}},\bm{\bar{u}^*}$ as relative velocity direction and scattering direction respectively.

The collisional formulas \eqref{ternary collision formulas} also imply  
\begin{equation}\label{skewsymmetry ternary}
\bm{\omega}\cdot\bm{\bar{u}^*}=-\bm{\omega}\cdot\bm{\bar{u}},
\end{equation}
which is the ternary analog to specular reflection with respect to the impact directions vector $\bm{\omega}=(\omega_1,\omega_2)\in\mathbb{S}^{2d-1}$.  Indeed,  one has
\begin{align*}
\boldsymbol{\omega} \cdot \boldsymbol{u^*}
	&= \omega_1 \cdot (v_1^* - v^* ) + \omega_2 \cdot (v_2^* - v^*) = \bm{u\cdot\omega}
		 -   \frac{2\bm{u\cdot\omega}}{1+\omega_1\cdot \omega_2} 
		 	\Big( |\omega_1|^2  + \omega_1 \cdot \omega_2 + |\omega_2|^2 \Big) =  - \boldsymbol{\omega} \cdot \boldsymbol{u},
\end{align*}
which is equivalent to \eqref{skewsymmetry ternary} due to \eqref{ternary relative velocities unit}.
 
 The term $B_3$ in the integrand of \eqref{ternary kernel},  depending on the relative velocity $\bm{u}\in\mathbb{R}^{2d}$ and the impact directions vector $\bm{\omega}=(\omega_1,\omega_2)\in\mathbb{S}^{2d-1}$, is the  ternary interaction differential cross-section, which describes the transition probability of ternary interactions. Recalling $|\bm{\widetilde{u}}|$ from \eqref{ternary magnitude pre} and $\bm{\bar{u}}\in\mathbb{E}^{2d-1}$ from \eqref{ternary relative velocities unit}, we assume  $B_3$ takes the form
\begin{equation}\label{form of B_3}
B_3(\bm{u},\bm{\omega})=|\bm{\widetilde{u}}|^{\gamma_3}b_3\left(\bm{\bar{u}}\cdot\bm{\omega},\omega_1\cdot\omega_2\right),\quad\gamma_3\in(-2d+1,1],
\end{equation}
and $b_3:[-1,1]\times[-\frac{1}{2},\frac{1}{2}]\to [0,\infty)$ is the ternary angular transition probability density.   Since $\bm{\omega}=(\omega_1,\omega_2)\in\mathbb{S}^{2d-1}$,  Cauchy-Schwartz inequality and \eqref{ternary magnitude pre} yield
\begin{equation}\label{ternary magnitude pre-2}
|\bm{\bar{u}}\cdot\bm{\omega}|\leq|\bm{\bar{u}}|\cdot|\bm{\omega}|=\frac{|\bm{u}|}{|\bm{\widetilde{u}}|}\leq 1.
\end{equation}

Moreover, for any $\bm{\omega}=(\omega_1,\omega_2)\in\mathbb{S}^{2d-1}$, Cauchy-Schwartz inequality followed by Young's inequality yield
$$|\omega_1\cdot\omega_2|\leq |\omega_1|\cdot|\omega_2|\leq\frac{|\omega_1|^2+|\omega_2|^2}{2}=\frac{1}{2}. $$
Therefore, for any $\bm{\omega}=(\omega_1,\omega_2)\in\mathbb{S}_1^{2d-1}$, the expression $b_3\left(\bm{\bar{u}}\cdot\bm{\omega}, \omega_1\cdot\omega_2 \right)$ is well defined.

 In addition, we assume that $b_3$ satisfies the following properties
\begin{itemize}
\item $b_3:[-1,1]\times[-\frac{1}{2},\frac{1}{2}]\to\mathbb{R} $ is a measurable, non-negative probability density.
\item $b_3$ is even with respect to the first argument i.e.
\begin{equation}\label{ternary micro2}
b_3(-z,w)=b_3(z,w),\quad\forall (z,w)\in[-1,1]\times[-\frac{1}{2},\frac{1}{2}]\, .
\end{equation}
In addition,  due to \eqref{skewsymmetry ternary},  the ternary micro-reversibility condition holds
\begin{equation}\label{ternary micro}
b_3\left(\bm{\bar{u}^*}\cdot\bm{\omega},\omega_1\cdot\omega_2\right)=b_3\left(\bm{\bar{u}}\cdot\bm{\omega}, \omega_1\cdot\omega_2 \right),\quad\forall\bm{\omega}\in\mathbb{S}^{2d-1},\quad\forall v,v_1,v_2\in\mathbb{R}^d\ , 
\end{equation}
and relations \eqref{form of B_3}, \eqref{ternary relative velocities unit} and \eqref{ternary micro} imply the total ternary collision kernel  satisfies
\begin{equation}\label{pre-post  ternary cross}
 B_3(\bm{u^*},\bm{\omega})=B_3(\bm{u},\bm{\omega}),\quad\forall\bm{\omega}\in\mathbb{S}^{2d-1},\quad\forall v,v_1,v_2\in\mathbb{R}^d.
\end{equation}\\
\item The probability density $b_3$ is integrable on $\mathbb{S}^{2d-1}$ i.e. 
\begin{equation}\label{Grad ternary}
\|b_3\|_{L^1(\mathbb{S}^{2d-1})}:=\sup_{\bm{\nu}\in\mathbb{E}_1^{2d-1}}\int_{\mathbb{S}^{2d-1}}b_3(\bm{\nu}\cdot\bm{\omega},\omega_1\cdot\omega_2)\,d\bm{\omega}<\infty.
\end{equation}
\end{itemize}

\begin{remark}
One can see that the ternary operator  introduced in \cite{ours} is a special case of \eqref{form of B_3} for
$$\gamma_3=1,\quad b_3(z,w)=\frac{1}{2}\frac{|z|}{\sqrt{1+w}}.$$
\end{remark}

\begin{remark}\label{remark on constants}
Throughout the paper we assume that at least one of $b_2$, $b_3$ is not trivially zero; however one of the two could be zero. If $b_3=0$, we recover the  classical Boltzmann equation \eqref{intro-binary}, while if $b_2=0$ we recover the ternary Boltzmann equation \eqref{intro-ternary}. As it will become clear, see for instance \eqref{constant wide K intro},  the dependence on the size of $b_2$ and $b_3$ is additive implying that the two collisional operators can be studied separately. 
\end{remark}
\subsubsection{Gain and loss operators} It turns out more convenient to study the more general collisional operators
\begin{equation}\label{binary kernel general}
Q_2(f,g)(t,x,v)=\int_{\mathbb{S}^{d-1}\times\mathbb{R}^d}B_2(u,\omega)\left(f(t,x,v')g(t,x,v_1')-f(t,x,v)g(t,x,v_1)\right)\,d\omega\,dv_1,
\end{equation} 
\begin{equation}\label{ternary kernel general}
\begin{aligned}
Q_3(f,g,h)(t,x,v)=\int_{\mathbb{S}^{2d-1}\times\mathbb{R}^{2d}}B_3( \bm{u},\bm{\omega})\left(f(t,x,v^*)g(t,x,v_1^*)h(t,x,v_2^*)-f(t,x,v)g(t,x,v_1)h(t,x,v_2)\right)&\\
\,d\omega_1\,d\omega_2\,dv_1\,dv_2&,
\end{aligned}
\end{equation}
Due to the  assumptions \eqref{Grad binary}, \eqref{Grad ternary}, the binary-ternary operator $Q_2(f,g)+Q_3(f,g,h)$ can be decomposed into a gain and a loss term as follows
\begin{equation}\label{decomposition to gain loss}
Q_2(f,g)+Q_3(f,g,h)=G(f,g,h)-L(f,g,h),
\end{equation}
 where
\begin{align}
L(f,g,h)&=L_2(f,g)+L_3(f,g,h)\label{L},\\
G(f,g,h)&=G_2(f,g)+G_3(f,g,h).\label{G}
 \end{align}
The binary gain and loss operators $G_2,L_2$ are given respectively by
\begin{align}
G_2(f,g)(t,x,v)&=\int_{\mathbb{S}^{d-1}\times\mathbb{R}^d}B_2(u,\omega)f(t,x,v')g(t,x,v_1')\,d\omega\,dv_1,\label{G_2}\\
L_2(f,g)(t,x,v)&=\int_{\mathbb{S}^{d-1}\times\mathbb{R}^d}B_2(u,\omega)f(t,x,v)g(t,x,v_1)\,d\omega\,dv_1,\label{L2}\\
\end{align}
and are clearly bilinear. The ternary gain and loss operators $L_3,G_3$ are given respectively by
\begin{align}
G_3(f,g,h)(t,x,v)&=\int_{\mathbb{S}^{2d-1}\times\mathbb{R}^{2d}}B_3(\bm{u},\bm{\omega}) f(t,x,v^*)g(t,x,v_1^*)h(t,x,v_2^*)\,d\omega_1\,d\omega_2\,dv_1\,dv_2,\label{G_3}\\
L_3(f,g,h)(t,x,v)&=\int_{\mathbb{S}^{2d-1}\times\mathbb{R}^{2d}}B_3(\bm{u},\bm{\omega})f(t,x,v)g(t,x,v_1)h(t,x,v_2)\,d\omega_1\,d\omega_2\,dv_1\,dv_2,\label{L_3}
\end{align}
and are clearly trilinear.
Notice  the loss term can be factorized as
\begin{equation}\label{L-R}
L(f,g,h)=fR(g,h),
\end{equation}
where $R$ is given by
\begin{equation}\label{def of R}
R(g,h):=R_2(g)+R_3(g,h),
\end{equation}
$R_2$ is the linear operator
\begin{equation}\label{R_2}
R_2(g)(t,x,v):=\int_{\mathbb{S}^{d-1}\times\mathbb{R}^d}B_2(u,\omega)g(t,x,v_1)\,d\omega\,dv_1,
\end{equation}
and
 $R_3$ is the bilinear operator
\begin{equation}\label{R_3}
R_3(g,h)(t,x,v):=\int_{\mathbb{S}^{2d-1}\times\mathbb{R}^{2d}}B_3(\bm{u},\bm{\omega})g(t,x,v_1)h(t,x,v_2)\,d\omega_1\,d\omega_2\,dv_1\,dv_2.
\end{equation}
\subsection{Some notation and the notion of a solution }\label{subsec: notation and solution}
Throughout the paper, the dimension $d\geq 2$, the binary and ternary integrability assumptions  \eqref{Grad binary}, \eqref{Grad ternary} respectively, and the cross-section exponents
\begin{equation}\label{exponents}
\gamma_2\in(-d+1,1],\quad \gamma_3\in (-2d+1,1],
\end{equation}
appearing respectively  in \eqref{form of B_2}, \eqref{form of B_3} will be fixed. Moreover, $C_d$ denotes a general constant depending on the dimension $d$ and can change values.
\subsubsection{Functional spaces}
Let us introduce the functional spaces used in this paper. 
First, in order to point out the dependence in positions and velocities,  we will use the notation
\begin{align}
L^1_{x,v}&:=L^1(\mathbb{R}^d\times\mathbb{R}^d),\label{L1 xv}\\
L^\infty_{x,v}&:=L^\infty(\mathbb{R}^d\times\mathbb{R}^d).\label{L infty xv}
\end{align}
We also define  the sets of space-velocity functions
\begin{align}
F_{x,v}&:=\{f:\mathbb{R}^d\times\mathbb{R}^d\to\overline{\mathbb{R}}, \text{ such that $f$ is measurable}\},\label{functions xv}\\
F_{x,v}^+&:=\{f\in F_{x,v}: f(x,v)\geq 0,\text{ for a.e. $(x,v)\in\mathbb{R}^d\times\mathbb{R}^d$}\},\label{positive functions xv}\\
L^{1,+}_{x,v}&:= L^1_{x,v}\cap F_{x,v}^+.\label{positive functions xv integrable}
\end{align}
In general, for $f,g\in F_{x,v}$, we write $f\geq g$ iff $f(x,v)\geq g(x,v)$ for a.e. $(x,v)\in\mathbb{R}^d\times\mathbb{R}^d$. Same notation will hold for
 equality as well.
 
Given $\alpha,\beta>0$, we define the corresponding (non-normalized) Maxwellian $M_{\alpha,\beta}:\mathbb{R}^{d}\times\mathbb{R}^d\to (0,\infty)$ by
\begin{equation}\label{mawellian}
M_{\alpha,\beta}(x,v):=e^{-\alpha|x|^2-\beta|v|^2}.
\end{equation}
We also define the corresponding Banach space of functions essentially bounded by $M_{\alpha,\beta}$ as
\begin{equation}\label{banach space of maxwellians}
\mathcal{M}_{\alpha,\beta}:=\{f\in F_{x,v}:\|f\|_{\mathcal{M}_{\alpha,\beta}}<\infty\},
\end{equation}
where
$$\|f\|_{\mathcal{M}_{\alpha,\beta}}:=\|fM_{\alpha,\beta}^{-1}\|_{L^\infty_{x,v}}.$$
We will write $f_n\overset{\mathcal{M}_{\alpha,\beta}}\longrightarrow f$ if
\begin{equation}\label{M-convergence}
f_n\overset{\text{a.e.}}\longrightarrow f\quad\text{and}\quad\sup_{n\in\mathbb{N}}\|f_n\|_{\mathcal{M}_{\alpha,\beta}}<\infty.
\end{equation}
 It is clear that if $f_n\overset{\mathcal{M}_{\alpha,\beta}}\longrightarrow f $ then $f_n\in\mathcal{M}_{\alpha,\beta}$ for all $n\in\mathbb{N}$ and $f\in\mathcal{M}_{\alpha,\beta}$. If $k\in\mathbb{N}$ and $f_{1,n}\overset{\mathcal{M}_{\alpha,\beta}}\longrightarrow f_{1}$, $f_{2,n}\overset{\mathcal{M}_{\alpha,\beta}}\longrightarrow f_{2}$,..., $f_{k,n}\overset{\mathcal{M}_{\alpha,\beta}}\longrightarrow f_{k}$, we will write 
$$(f_{1,n},...,f_{k,n})\overset{\mathcal{M}_{\alpha,\beta}}\longrightarrow (f_1,...,f_k).$$
We also define the set of a.e. non-negative functions essentially bounded by $M_{\alpha,\beta}$ as
\begin{equation}\label{positive M}
\mathcal{M}_{\alpha,\beta}^+:=\mathcal{M}_{\alpha,\beta}\cap F_{x,v}^+.
\end{equation}
Given $0<T\leq\infty$, we define the sets of time dependent functions
\begin{align}
\mathcal{F}_T&:=\{f:[0,T)\to F_{x,v}\},\label{functions of time}\\
\mathcal{F}_T^+&:=\{f:[0,T)\to F_{x,v}^+\}\label{positive functions of time},
\end{align}
and given $f,g\in\mathcal{F}_T$, we will write $f\geq g$ iff $f(t)\geq g(t)$ for all $t\in [0,T)$. Same notation will hold for equalities as well.

Finally, we define  the following subsets of functional spaces
\begin{align}
C^0([0,T),L^{1,+}_{x,v})&:=C^0([0,T),L^{1}_{x,v})\cap \mathcal{F}_T^+,\label{continuous positive functions}\\
L^1_{loc}([0,T),L_{x,v}^{1,+})&:=L^1_{loc}([0,T),L_{x,v}^{1})\cap \mathcal{F}_T^+\label{L 1 pos def},\\
L^\infty([0,T),L_{x,v}^{1,+})&:=L^\infty([0,T),L_{x,v}^{1})\cap \mathcal{F}_T^+,\label{L infty pos def}
\end{align}
and given $\alpha,\beta>0$, we define the Banach space of time dependent  functions uniformly essentially bounded by  $M_{\alpha,\beta}$
\begin{equation}\label{L infty M}
L^\infty([0,T),\mathcal{M}_{\alpha,\beta}):=\{f\in\mathcal{F}_T: |||f|||_\infty<\infty\},
\end{equation}
with norm 
 \begin{equation}\label{time maxwellian norm}
 |||f|||_\infty=\sup_{t\in[0,T)}\|f(t)\|_{\mathcal{M}_{\alpha,\beta}}.
 \end{equation}
 Notice that in definition \eqref{L infty M}, the supremum is taken with respect to all $t\in[0,T)$. We also write
\begin{equation}\label{positive L infty M}
L^\infty([0,T),\mathcal{M}_{\alpha,\beta}^+):=L^\infty([0,T),\mathcal{M}_{\alpha,\beta})\cap\mathcal{F}_T^+.
\end{equation}

\subsubsection{Transport operator} We now introduce the transport operator which will be crucial to define mild solutions to \eqref{generalized boltzmann equation}. Let us recall from \eqref{functions xv}-\eqref{positive functions xv} the sets of functions
\begin{align*}
F_{x,v}&:=\{f:\mathbb{R}^d\times\mathbb{R}^d\to\overline{\mathbb{R}}, \text{ such that $f$ is measurable}\},\\
F_{x,v}^+&:=\{f\in F_{x,v}: f(x,v)\geq 0,\text{ for a.e. $(x,v)\in\mathbb{R}^d\times\mathbb{R}^d$}\}.
\end{align*}

Consider a positive time $0<T\leq\infty$ (we can have $T=\infty$) and recall from \eqref{functions of time}-\eqref{positive functions of time} the sets of time dependent functions
\begin{align*}
\mathcal{F}_T&:=\{f:[0,T)\to F_{x,v}\},\\
\mathcal{F}_T^+&:=\{f:[0,T)\to F_{x,v}^+\}.
\end{align*}

Given $f\in\mathcal{F}_T$, we define $f^\#\in\mathcal{F}_T$ by
\begin{equation}\label{definition of transport}
f^\#(t,x,v):=f(t,x+tv,v),
\end{equation}
and $f^{-\#}\in\mathcal{F}_T$ by
\begin{equation*}
f^{-\#}(t,x,v):=f(t,x-tv,v).
\end{equation*}
 Clearly, the operators $\#:\mathcal{F}_T\to\mathcal{F}_T$ and $-\#:\mathcal{F}_T\to\mathcal{F}_T$ are linear and invertible and in particular
\begin{equation*}
(\#)^{-1}=-\#.
\end{equation*}
\begin{remark}\label{remark on measure preserving} Let $f,g\in\mathcal{F}_T$.
Since the maps $(x,v)\to (x+tv,v)$ and $(x,v)\to(x-tv,v)$ are measure-preserving, for all $t\in[0,T)$, we have 
\begin{align*}
f\geq g\Leftrightarrow f^\#\geq g^\#\Leftrightarrow f^{-\#}\geq g^{-\#}.
\end{align*}
In particular 
\begin{equation}\label{preservation of ineq under sharp}
f\in \mathcal{F}_T^+\Leftrightarrow f^\#\in \mathcal{F}_T^+\Leftrightarrow f^{-\#}\in \mathcal{F}_T^+.
\end{equation}
\end{remark}
\begin{remark}\label{isometry remark}
Let $f,g\in\mathcal{F}_T$. Since the maps $(x,v)\to (x+tv,v)$ and $(x,v)\to(x-tv,v)$ are measure-preserving, for all $t\in[0,T)$, we have 
\begin{equation}\label{L1 change of variables}
\|f^\#(t)\|_{L^1_{x,v}}=\|f(t)\|_{L^1_{x,v}}=\|f^{-\#}(t)\|_{L^1_{x,v}},\quad\forall t\in[0,T).
\end{equation}
Relation \eqref{preservation of ineq under sharp}-\eqref{L1 change of variables} and linearity of the transport operator imply 
\begin{equation}\label{equivalence of continuity}
f\in C^0([0,T),L^{1,+}_{x,v})\Leftrightarrow f^\#\in C^0([0,T),L^{1,+}_{x,v})\Leftrightarrow f^{-\#}\in C^0([0,T),L^{1,+}_{x,v}).
\end{equation}
\end{remark}

Throughout the manuscript, we will often define $f^\#\in \mathcal{F}_T$ directly, implying that $f$ is defined by $f:=(f^\#)^{-\#}$.

\subsubsection{Transported gain and loss operators} In order to define mild solutions to \eqref{generalized boltzmann equation} , it is important to understand the action of the transport operator on the gain and loss operators. More specifically, given $f,g,h\in\mathcal{F}_T$, for the gain operators we write
\begin{align}
G^\#_2(f,g)(t,x,v)&:=\left(G_2\left(f,g\right)\right)^\#(t,x,v)=\int_{\mathbb{S}^{d-1}\times\mathbb{R}^d}B_2(u,\omega)f(t,x+tv,v')g(t,x+tv,v_1')\,d\omega\,dv_1,\nonumber\\
G^\#_3(f,g,h)(t,x,v)&:=\left(G_3\left(f,g,h\right)\right)^\#(t,x,v)\nonumber\\
&=\int_{\mathbb{S}^{2d-1}\times\mathbb{R}^{2d}}B_3(\bm{u},\bm{\omega})f(t,x+tv,v^*)g(t,x+tv,v_1^*)h(t,x+tv,v_2^*)\,d\omega_1\,d\omega_2\,dv_1\,dv_2,\nonumber\\
G^\#(f,g,h)(t,x,v)&:=G^\#_2(f,g)(t,x,v)+G^\#_3(f,g,h)(t,x,v),\label{G sharp}
\end{align}
and for the loss operators we write
\begin{align}
L^\#_2(f,g)(t,x,v)&:=\left(L_2\left(f,g\right)\right)^\#(t,x,v)=\int_{\mathbb{S}^{d-1}\times\mathbb{R}^d}B_2(u,\omega)f(t,x+tv,v)g(t,x+tv,v_1)\,d\omega\,dv_1,\nonumber\\
L^\#_3(f,g,h)(t,x,v)&:=\left(L_3\left(f,g,h\right)\right)^\#(t,x,v)\nonumber\\
&=\int_{\mathbb{S}^{2d-1}\times\mathbb{R}^{2d}}B_3(\bm{u},\bm{\omega})f(t,x+tv,v)g(t,x+tv,v_1)h(t,x+tv,v_2)\,d\omega_1\,d\omega_2\,dv_1\,dv_2,\nonumber\\
L^\#(f,g,h)(t,x,v)&:=L^\#_2(f,g)(t,x,v)+L^\#_3(f,g,h)(t,x,v).\label{L sharp}
\end{align}
Under this notation, it is  straightforward to verify that 
\begin{equation}\label{connecting L-R sharp}
\begin{aligned}
L_2^\#(f,g,h)(t)&=f^\#(t)R_2^\#(g)(t),\\
L_3^\#(f,g,h)(t)&=f^\#(t)R_3^\#(g,h)(t),\\
L^\#(f,g,h)(t)&=f^\#(t)R^\#(g,h)(t),
\end{aligned}
\end{equation}
where
\begin{align}
R^\#_2(g)(t,x,v):&=\left(R_2\left(g\right)\right)^\#(t,x,v)=\int_{\mathbb{S}^{d-1}\times\mathbb{R}^d}B_2(u,\omega)g(t,x+tv,v_1)\,d\omega\,dv_1,\nonumber\\
R^\#_3(g,h)(t,x,v):&=\left(R_3\left(g,h\right)\right)^\#(t,x,v)\nonumber\\
&=\int_{\mathbb{S}^{2d-1}\times\mathbb{R}^{2d}}B_3(\bm{u},\bm{\omega})g(t,x+tv,v_1)h(t,x+tv,v_2)\,d\omega_1\,d\omega_2\,dv_1\,dv_2,\nonumber\\
R^\#(g,h)(t,x,v):&=R^\#_2(g)(t,x,v)+R^\#_3(g,h)(t,x,v).\label{R sharp}
\end{align}

\subsubsection{Notion of a mild solution} Using  \eqref{decomposition to gain loss}, the binary-ternary  Boltzmann equation \eqref{generalized boltzmann equation} is written as follows
\begin{equation}\label{generalized boltzmann gain loss form}
\begin{cases}
\partial_tf+v\cdot\nabla_xf=G(f,f,f)-L(f,f,f),\quad (t,x,v)\in (0,\infty)\times\mathbb{R}^d\times\mathbb{R}^d,\\
f(0)=f_0,\quad (x,v)\in\mathbb{R}^d\times\mathbb{R}^d,
\end{cases}
\end{equation}
where the gain term $G(f,f,f)$ and the loss term $L(f,f,f)$ are given by \eqref{G}-\eqref{L} respectively. 

Here is where the importance of the transport operator will become clear. Indeed, using  the chain rule, the initial value problem \eqref{generalized boltzmann gain loss form} can be formally  written as
\begin{equation}\label{boltzmann for transport}
\begin{cases}
\partial_tf^\#+L^\#(f,f,f)=G^\#(f,f,f),\quad (t,x,v)\in (0,\infty)\times\mathbb{R}^d\times\mathbb{R}^d,\\
f^\#(0)=f_0,\quad (x,v)\in\mathbb{R}^d\times\mathbb{R}^d.
\end{cases}
\end{equation}

Motivated by \eqref{boltzmann for transport}, we aim to define solutions of \eqref{generalized boltzmann equation} up to time $0<T\leq\infty$, with respect to a given Maxwellian $M_{\alpha,\beta}$, where $\alpha,\beta>0$.

\begin{definition}\label{boltzmann mild solution} Let  $0<T\leq\infty$, $\alpha,\beta>0$ and $f_0\in\mathcal{M}_{\alpha,\beta}^+$. A mild solution to \eqref{generalized boltzmann equation} in $[0,T)$, with initial data $f_0\in\mathcal{M}_{\alpha,\beta}^+$, is a function  $f\in \mathcal{F}_T^+$ such that 
\begin{enumerate}[(i)]

\item $f^\#\in C^0([0,T),L^{1,+}_{x,v})\cap L^\infty([0,T),\mathcal{M}_{\alpha,\beta}^+)$,
\item $L^\#(f,f,f), G^\#(f,f,f)\in L^\infty([0,T),L^{1,+}_{x,v})$,

\item $f^\#$ is weakly differentiable and satisfies
\begin{equation}\label{mild solution}
\begin{cases}
 \displaystyle\frac{\,df^\#}{\,dt}+L^\#(f,f,f)= G^\#(f,f,f),\\
f^\#(0)=f_0.
\end{cases}
\end{equation}
\end{enumerate}
\end{definition}
\begin{remark} The differential equation of \eqref{mild solution} is interpreted as an equality of distributions since all terms involved belong to $L^1_{loc}([0,T),L^{1,+}_{x,v})$.
\end{remark}
\begin{remark} Remarks \ref{remark on measure preserving}-\ref{isometry remark} imply that a mild solution $f$ to \eqref{generalized boltzmann equation} belongs to $C^0([0,T),L^{1,+}_{x,v})$.
\end{remark}

\subsection{Statement of the main result}\label{subsec: statement}
Now we are ready to state the main result of the paper. 
\begin{theorem}\label{gwp theorem}
Let  $0<T\leq\infty$, $\alpha,\beta>0$.
Then for any initial data $f_0\in\mathcal{M}_{\alpha,\beta}^+$ with
\begin{equation}\label{condition for existence}
\|f_0\|_{\mathcal{M}_{\alpha,\beta}}< \frac{\alpha^{1/2}}{48K_\beta(1+\frac{\alpha^{1/4}}{2\sqrt{6K_\beta}})},
\end{equation}
where
\begin{equation}
\begin{aligned}
K_\beta=C_d\bigg[\|b_2\|_{L^1(\mathbb{S}^{d-1})}(\beta^{-d/2}+\frac{1}{d+\gamma_2-1})+\|b_3\|_{L^1(\mathbb{S}^{2d-1})}(\beta^{-d}+\frac{1}{2d+\gamma_3-1})\bigg]>0,\label{constant wide K intro}
\end{aligned}
\end{equation}
 and $C_d$ is an appropriate constant depending on the dimension $d$,
 the binary-ternary  Boltzmann equation \eqref{generalized boltzmann equation} has a unique mild solution $f$ satisfying the bound
\begin{equation}\label{condition for uniqueness} 
 |||f^\#|||_{\infty}\leq \frac{1-\sqrt{1-48K_\beta\alpha^{-1/2}(1+\frac{\alpha^{1/4}}{2\sqrt{6K_\beta}})\|f_0\|_{\mathcal{M}_{\alpha,\beta}}}}{24K_\beta\alpha^{-1/2}\left(1+\frac{\alpha^{1/4}}{2\sqrt{6K_\beta}}\right)}.
 \end{equation}
 \end{theorem}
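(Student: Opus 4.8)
### Proof plan for Theorem~\ref{gwp theorem}

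\textbf{Overall strategy.} The plan is to run a Kaniel--Shinbrot iteration in the space $L^\infty([0,T),\mathcal{M}_{\alpha,\beta}^+)$, producing two monotone sequences — a decreasing sequence of supersolutions $(u_n)$ and an increasing sequence of subsolutions $(l_n)$ — squeezed between $0$ and a fixed ``ceiling'' Maxwellian profile. For this we first fix a traveling-Maxwellian comparison function: one checks that if $g^\#(t) \leq \lambda M_{\alpha,\beta}$, then $G^\#(g,g,g)(t)$ and $L^\#(g,g,g)(t)$ are again controlled by a constant multiple of $M_{\alpha,\beta}$, where the constant is governed by the $L^\infty L^1$ estimates on the gain/loss operators coming from Lemma~\ref{convolution lemma}, Lemma~\ref{bound on R tilde lemma}, Proposition~\ref{permutation lemma}, and Proposition~\ref{bounds on operators proposition}. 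The key algebraic point is that the relevant operator norms, integrated along characteristics, are bounded by $K_\beta\alpha^{-1/2}$ times the square (for the binary and cubic-in-$g$ ternary pieces) of the $|||\cdot|||_\infty$-size of $g^\#$ — this is exactly why the constant $K_\beta$ of \eqref{constant wide K intro} and the factor $\alpha^{-1/2}$ (a $1$D Gaussian-in-time integration bound, together with its $2d$-analog for the ternary term) appear in \eqref{condition for existence}--\eqref{condition for uniqueness}.

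\textbf{Setting up the iteration.} I would define $l_0 \equiv 0$ and $u_0^\#(t,x,v) := e^{-\beta t^2|v|^2 \cdot 0}\cdots$ — more precisely, $u_0$ chosen so that $u_0^\#(t) = C_0 M_{\alpha/2,\beta/2}$-type bound, i.e. a fixed multiple of the Maxwellian large enough to dominate $f_0$ and closed under the gain map. Then define inductively
\begin{equation*}
\begin{cases}
\dfrac{d\,l_{n+1}^\#}{dt} + l_{n+1}^\# R^\#(u_n,u_n) = G^\#(l_n,l_n,l_n), & l_{n+1}^\#(0) = f_0,\\[4pt]
\dfrac{d\,u_{n+1}^\#}{dt} + u_{n+1}^\# R^\#(l_n,l_n) = G^\#(u_n,u_n,u_n), & u_{n+1}^\#(0) = f_0,
\end{cases}
\end{equation*}
solved explicitly via the integrating factor $\exp\!\big(\int_0^t R^\#(\cdots)\,ds\big)$, which is legitimate because $R^\#$ applied to $\mathcal{M}_{\alpha,\beta}^+$-functions lands in $L^\infty([0,T),L^1_{x,v})$ by the bounds of Section~\ref{sec: properties of gain and loss}. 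One then verifies, by induction, the \emph{beginning condition} $0 = l_0 \leq l_1 \leq u_1 \leq u_0$ (this is where smallness of $\|f_0\|_{\mathcal{M}_{\alpha,\beta}}$ is used — the ceiling $u_0$ must satisfy a quadratic/cubic closure inequality), and then the sandwich $l_n \leq l_{n+1} \leq u_{n+1} \leq u_n$ for all $n$, using positivity of $G^\#$ and $R^\#$ and the comparison principle for the linear ODE. This is essentially the content of Section~\ref{sec iteration}; I would cite the monotonicity lemmas proved there.

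\textbf{Passing to the limit and pinning the constants.} Monotone convergence gives $l_n \uparrow l$ and $u_n \downarrow u$ pointwise a.e., both in $L^\infty([0,T),\mathcal{M}_{\alpha,\beta}^+)$ with $|||l^\#|||_\infty, |||u^\#|||_\infty \leq |||u_0^\#|||_\infty$. Dominated convergence (domination by the fixed Maxwellian ceiling, whose gain/loss images are integrable by the same estimates) lets me pass to the limit in the integral form of the ODEs, yielding that both $l$ and $u$ solve the \emph{same} mild equation \eqref{mild solution}; the crucial step is $l = u$, which follows from a Gr\"onwall-type estimate on $|||(u-l)^\#(t)|||$ — here the symmetrization of Proposition~\ref{permutation lemma} is essential so that the difference of the trilinear terms factors with a controllable coefficient, and the smallness threshold guarantees the Gr\"onwall constant is subcritical, forcing $u - l \equiv 0$. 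The common value $f$ is the desired mild solution. Finally, to extract the explicit bound \eqref{condition for uniqueness}: I track the scalar majorizing sequence $a_{n+1} = \Phi(a_n)$ where $\Phi(a) = \|f_0\|_{\mathcal{M}_{\alpha,\beta}} + 24K_\beta\alpha^{-1/2}\big(1 + \tfrac{\alpha^{1/4}}{2\sqrt{6K_\beta}}\big) a^2$ controls $|||u_n^\#|||_\infty$; its least fixed point is precisely the right-hand side of \eqref{condition for uniqueness}, and it is real and attracting exactly when \eqref{condition for existence} holds (the discriminant condition $48 K_\beta\alpha^{-1/2}(1+\cdots)\|f_0\|_{\mathcal{M}_{\alpha,\beta}} < 1$). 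Uniqueness among all mild solutions obeying \eqref{condition for uniqueness} follows from the same Gr\"onwall argument applied to the difference of two such solutions.

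\textbf{Main obstacle.} The genuinely hard part is not the abstract iteration scaffolding — that is classical — but verifying that the ternary operator is \emph{closed} on the Maxwellian ball with the \emph{same} structural constant as the binary one, i.e. producing the clean $L^\infty L^1$ and along-characteristics time-integral bounds with the asymmetry of $Q_3$ resolved. Concretely, the obstruction is the asymmetry in the inputs of $G_3,L_3$ flagged in the introduction: the raw estimates from Lemma~\ref{convolution lemma} are asymmetric (Lemma~\ref{bound on R tilde lemma}), and one must invoke the permutation/symmetrization of Proposition~\ref{permutation lemma} together with the $2d$-dimensional traveling-Maxwellian time-integration bound in Proposition~\ref{bounds on operators proposition} to get a symmetric bound compatible with the monotone-iteration comparison arguments. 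Once those Section~\ref{sec: properties of gain and loss} estimates are in hand with the additive $b_2$/$b_3$ dependence of \eqref{constant wide K intro}, the rest is bookkeeping with the scalar quadratic $\Phi$.
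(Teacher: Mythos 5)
Your plan follows essentially the same route as the paper: take $l_0^\#=0$ and a ceiling $u_0^\#=C\,M_{\alpha,\beta}$, verify the beginning condition $0\le l_1^\#\le u_1^\#\le u_0^\#$ using the time-integrated gain bound of Proposition \ref{bounds on operators proposition}, then invoke the monotone Kaniel--Shinbrot machinery of Section \ref{sec iteration} (whose contraction step, via Proposition \ref{permutation lemma} and Proposition \ref{bounds on operators proposition}, gives $l=u$ and uniqueness), with $C$ chosen as the smaller root of a quadratic, exactly as in the paper's proof. The one correction needed is the coefficient in your scalar majorant $\Phi$: it must be $12K_\beta\alpha^{-1/2}\bigl(1+\frac{\alpha^{1/4}}{2\sqrt{6K_\beta}}\bigr)$ (the $6+6$ gain-plus-loss contraction constant), not $24K_\beta\alpha^{-1/2}\bigl(1+\frac{\alpha^{1/4}}{2\sqrt{6K_\beta}}\bigr)$, since with $24$ the discriminant condition is stricter than \eqref{condition for existence} and the least fixed point no longer equals the right-hand side of \eqref{condition for uniqueness}.
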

 \begin{remark} As we will see, the uniqueness claimed above holds in the class of solutions of \eqref{generalized boltzmann equation} satisfying \eqref{condition for uniqueness}.
 \end{remark}

\begin{remark} 
According to the assumptions on $b_2,b_3$ made in Remark \ref{remark on constants}, Theorem \ref{gwp theorem} applies as well to the end point cases where either $b_2=0$ or $b_3=0$ (but not both). In the  case $b_3=0$, one recovers the solution of the classical Boltzmann equation \eqref{intro-binary} constructed in \cite{illner-shinbrot}, while in the case $b_2=0$, one obtains well-posedness of the ternary Boltzmann equation \eqref{intro-ternary}, introduced in \cite{ours}.  
\end{remark}

\section{Properties of the transported gain and loss operators}\label{sec: properties of gain and loss}
In this section, we investigate properties of the transported gain and loss operators which will be important for proving global well-posedness of \eqref{generalized boltzmann equation}. 

\subsection{Monotonicity and $L^1$-norms}
As we will see, the transported gain and loss operators are monotone increasing when acting on non-negative functions. These monotonicity properties will allow us to construct monotone sequences of supersolutions and subsolutions to \eqref{generalized boltzmann equation}. Moreover, we show that the $L^1$-norm of the gain is equal to the $L^1$-norm of the loss.  This equality will allow us to reduce estimates on the norm of the gain term to estimating the norm of the loss term.
In the following, saying that an operator is bilinear/trilinear, we mean it is linear in each argument, and saying it is monotone increasing, we mean it is increasing in each argument. 
 \begin{proposition}\label{monotonicity proposition sharp} Let $0<T\leq\infty$. Then the following hold
 \begin{enumerate}[(i)]
 \item $R_2^\#:\mathcal{F}_T^+\to\mathcal{F}_T^+$ is linear and monotone increasing.
 \item $L_2^\#,G_2^\#,R_3^\#:\mathcal{F}_T^+\times\mathcal{F}_T^+\to\mathcal{F}_T^+$ are bilinear and monotone increasing.
 \item $L_3^\#, G_3^\#:\mathcal{F}_T^+\times\mathcal{F}_T^+\times\mathcal{F}_T^+\to\mathcal{F}_T^+$ are trilinear and monotone increasing.
 \item $L^\#,G^\#:\mathcal{F}_T^+\times\mathcal{F}_T^+\times\mathcal{F}_T^+\to\mathcal{F}_T^+$ and $R^\#:\mathcal{F}_T^+\times \mathcal{F}_T^+\to \mathcal{F}_T^+$ are monotone increasing.
 \item For any $f,g,h\in\mathcal{F}_T^+$, the following identities hold
 \begin{equation}\label{G leq L sharp}
 \begin{aligned}
\|G_2^\#(f,g)(t)\|_{L^1_{x,v}}&= \|L_2^\#(f,g)(t)\|_{L^1_{x,v}},\quad\forall t\in[0,T),\\
\|G_3^\#(f,g,h)(t)\|_{L^1_{x,v}}&= \|L_3^\#(f,g,h)(t)\|_{L^1_{x,v}},\quad\forall t\in[0,T),\\
\|G^\#(f,g,h)(t)\|_{L^1_{x,v}}&= \|L^\#(f,g,h)(t)\|_{L^1_{x,v}},\quad\forall t\in[0,T).
 \end{aligned}
 \end{equation}
 \end{enumerate}
 \end{proposition}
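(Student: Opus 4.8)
The plan is to treat each claim (i)--(v) in turn, observing that (i), (ii), (iii) are immediate structural consequences of the explicit integral formulas together with the nonnegativity of the kernels $B_2, B_3$ (which follows from \eqref{form of B_2}, \eqref{form of B_3} and the nonnegativity of $b_2$, $b_3$), while (iv) follows by taking sums, and (v) is the only part requiring a genuine computation --- a change of variables exploiting the collisional symmetries. For (i), fix $0<T\le\infty$ and $g\in\mathcal F_T^+$; since $B_2\ge 0$ and, for each $t$, the integrand $(t,x,v)\mapsto B_2(u,\omega)g(t,x+tv,v_1)$ is a nonnegative measurable function of $(\omega,v_1)$, the integral $R_2^\#(g)(t,x,v)$ is well defined in $\overline{\mathbb R}$ and nonnegative, so $R_2^\#(g)\in\mathcal F_T^+$; linearity in $g$ is clear from linearity of the integral, and monotonicity follows because $g_1\ge g_2$ pointwise (a.e.) implies $g_1(t,x+tv,v_1)\ge g_2(t,x+tv,v_1)$ for a.e.\ $(x,v)$ and all $(\omega,v_1)$ by Remark \ref{remark on measure preserving}, whence the integrals are ordered. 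The arguments for $L_2^\#, G_2^\#, R_3^\#$ (bilinear) and $L_3^\#, G_3^\#$ (trilinear) are verbatim the same, using that a product of nonnegative functions is nonnegative and that $a_i\ge b_i\ge 0$ implies $\prod a_i\ge\prod b_i$; this gives (ii) and (iii). Part (iv) is then immediate: $L^\#=L_2^\#+L_3^\#$, $G^\#=G_2^\#+G_3^\#$, $R^\#=R_2^\#+R_3^\#$ are sums of monotone increasing maps into $\mathcal F_T^+$, hence monotone increasing into $\mathcal F_T^+$.

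For (v), the strategy is the classical one of undoing the collision. Consider first the binary identity. Fix $t$ and integrate $G_2^\#(f,g)(t,x,v)$ over $(x,v)\in\mathbb R^d\times\mathbb R^d$; by Tonelli's theorem (everything nonnegative) we may write
\begin{equation*}
\|G_2^\#(f,g)(t)\|_{L^1_{x,v}}=\int_{\mathbb R^d\times\mathbb R^d}\int_{\mathbb S^{d-1}\times\mathbb R^d}B_2(u,\omega)\,f(t,x+tv,v')g(t,x+tv,v_1')\,d\omega\,dv_1\,dx\,dv.
\end{equation*}
Next perform the translation $x\mapsto x-tv$ in the $x$-integral, which is measure preserving and removes the shift (note that both $v'$ and $v_1'$ depend only on $v,v_1,\omega$, not on $x$). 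It then remains, for fixed $\omega$, to apply the change of variables $(v,v_1)\mapsto(v',v_1')$: by \eqref{binary collision formulas} this map is an involution with Jacobian of modulus $1$, and by \eqref{pre-post bin cross} one has $B_2(u',\omega)=B_2(u,\omega)$, so relabeling $(v',v_1')\rightsquigarrow(v,v_1)$ turns the gain integral into exactly the loss integral $\|L_2^\#(f,g)(t)\|_{L^1_{x,v}}$. The ternary identity is handled identically: after the translation $x\mapsto x-tv$, one applies, for each fixed $\bm\omega=(\omega_1,\omega_2)\in\mathbb S^{2d-1}$, the change of variables $(v,v_1,v_2)\mapsto(v^*,v_1^*,v_2^*)$ given by \eqref{ternary collision formulas}, which is an involution of unit Jacobian modulus, together with the invariance \eqref{pre-post ternary cross}, $B_3(\bm{u^*},\bm\omega)=B_3(\bm u,\bm\omega)$; relabeling yields $\|L_3^\#(f,g,h)(t)\|_{L^1_{x,v}}$. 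Adding the binary and ternary equalities gives the third line of \eqref{G leq L sharp}.

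I expect the only real obstacle to be the verification of the two facts used in the change of variables for (v): that the collision maps $(v,v_1)\mapsto(v',v_1')$ and $(v,v_1,v_2)\mapsto(v^*,v_1^*,v_2^*)$ are involutions with Jacobian determinant of absolute value one, and that the cross-sections are invariant, i.e.\ \eqref{pre-post bin cross} and \eqref{pre-post ternary cross}. The cross-section invariance is already recorded in the excerpt, so it may simply be cited; the involution/Jacobian property for the binary map is standard, and for the ternary map it is presumably established in \cite{ours} (or can be checked directly from \eqref{ternary collision formulas} using \eqref{skewsymmetry ternary} and \eqref{ternary velocities equality}), so the proof should reduce to citing these and assembling the Tonelli/translation/relabeling steps. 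A minor point worth stating carefully is that all manipulations are legitimate in $[0,\infty]$ by Tonelli, so the identities hold as equalities in $\overline{\mathbb R}_{\ge 0}$ regardless of finiteness of either side.
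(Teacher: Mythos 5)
Your proposal is correct and follows essentially the same route as the paper: parts (i)--(iv) by nonnegativity, (multi)linearity and the order-preservation of the transport map, and part (v) by reducing to the untransported operators (your translation $x\mapsto x-tv$ is exactly the isometry \eqref{L1 change of variables}), then applying the involutive collisional change of variables together with the invariances \eqref{pre-post bin cross} and \eqref{pre-post  ternary cross}, and summing for the mixed case. The only cosmetic difference is that you make the unit-Jacobian property of the binary and ternary collision maps explicit, which the paper leaves implicit in the phrase ``involutionary substitution.''
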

 \begin{proof}
 Parts \textit{(i)-(iv)} are immediate by linearity of the integral, positivity of the functions considered and relation \eqref{preservation of ineq under sharp}.
 
 Let us now prove {\em (v)}.  We first prove \eqref{G leq L sharp} for the binary case. By \eqref{L1 change of variables}, we have
$$\|G_2^\#(f,g)(t)\|_{L^1_{x,v}}= \|G_2(f,g)(t)\|_{L^1_{x,v}},\quad \|L_2^\#(f,g)(t)\|_{L^1_{x,v}}= \|L_2(f,g)(t)\|_{L^1_{x,v}},\quad\forall t\in [0,T).$$
Therefore, for any $t\in[0,T)$, using \eqref{pre-post bin cross} and involutionary substitution $(v',v_1')\to (v,v_1)$, we obtain
\begin{align*}
\|G_2^\#(f,g)(t)\|_{L^1_{x,v}}&=\|G_2(f,g)(t)\|_{L^1_{x,v}}\\
&=\int_{\mathbb{R}^{3d}\times\mathbb{S}^{d-1}}B_2(u,\omega)f(t,x,v')g(t,x,v_1')\,d\omega\,dv_1\,dv\,dx\nonumber\\
&=\int_{\mathbb{R}^{3d}\times\mathbb{S}^{d-1}}B_2(u',\omega)f(t,x,v')g(t,x,v_1')\,d\omega\,dv_1\,dv\,dx\\
&=\int_{\mathbb{R}^{3d}\times\mathbb{S}^{d-1}}B_2(u,\omega)f(t,x,v)g(t,x,v_1)\,d\omega\,dv_1\,dv\,dx\\
&=\|L_2(f,g)(t)\|_{L^1_{x,v}}=\|L_2^\#(f,g)(t)\|_{L^1_{x,v}}\nonumber.
\end{align*}
We now prove \eqref{G leq L sharp} for the ternary case. By \eqref{L1 change of variables}, we have
$$\|G_3^\#(f,g,h)(t)\|_{L^1_{x,v}}= \|G_3(f,g,h)(t)\|_{L^1_{x,v}},\quad \|L_3^\#(f,g,h)(t)\|_{L^1_{x,v}}= \|L_3(f,g,h)(t)\|_{L^1_{x,v}},\quad\forall t\in [0,T).$$
Therefore, for any $t\in[0,T)$, using \eqref{pre-post  ternary cross} and the involutionary substitution $(v^*,v_1^*,v_2^*)\to (v,v_1,v_2)$, we obtain
\begin{align*}
\|G_3^\#(f,g,h)(t)\|_{L^1_{x,v}}&=\|G_3(f,g,h)(t)\|_{L^1_{x,v}}\\
&=\int_{\mathbb{R}^{4d}\times\mathbb{S}^{2d-1}}B_3(\bm{u},\bm{\omega})f(t,x,v^*)g(t,x,v_1^*)h(t,x,v_2^*)\,d\omega_1\,d\omega_2\,dv_1\,dv_2\,dv\,dx\nonumber\\
&=\int_{\mathbb{R}^{4d}\times\mathbb{S}^{2d-1}}B_3(\bm{u^*},\bm{\omega})f(t,x,v^*)g(t,x,v_1^*)h(t,x,v_2^*)\,d\omega_1\,d\omega_2\,dv_1\,dv_2\,dv\,dx\nonumber\\
&=\int_{\mathbb{R}^{4d}\times\mathbb{S}^{2d-1}}B_3(\bm{u},\bm{\omega})f(t,x,v)g(t,x,v_1)h(t,x,v_2)\,d\omega_1\,d\omega_2\,dv_1\,dv_2\,dv\,dx\\
&=\|L_3(f,g,h)(t)\|_{L^1_{x,v}}=\|L_3^\#(f,g,h)(t)\|_{L^1_{x,v}}.
\end{align*}
We finally  prove \eqref{G leq L sharp} for the mixed case. By positivity,  for any $t\in[0,T)$, we have
\begin{align*}
\|G^\#(f,g,h)(t)\|_{L^1_{x,v}}&=\|G_2^\#(f,g)(t)+G_3^\#(f,g,h)(t)\|_{L^1_{x,v}}=\|G_2^\#(f,g)(t)\|_{L^1_{x,v}}+\|G_3^\#(f,g,h)(t)\|_{L^1_{x,v}},\\
\|L^\#(f,g,h)(t)\|_{L^1_{x,v}}&=\|L_2^\#(f,g)(t)+L_3^\#(f,g,h)(t)\|_{L^1_{x,v}}=\|L_2^\#(f,g)(t)\|_{L^1_{x,v}}+\|L_3^\#(f,g,h)(t)\|_{L^1_{x,v}}.
\end{align*}
Equality \eqref{G leq L sharp} for the mixed case  immediately follows from the corresponding binary  and ternary equalities.
 \end{proof}

 \subsection{Convolution estimates}
We now present a general convolution-type result, which will  be essential for the control of the binary and the ternary collisional operators. These estimates will be of fundamental importance in the proof of the $L^\infty L^1$ estimates (see Subsection \ref{L_inf L1 estimates}) and the global estimate on the time average of the transported gain and loss operators appearing in Proposition \ref{bounds on operators proposition}, which in turn will be crucial for the proof of global well-posedness of \eqref{generalized boltzmann equation}. For the binary case one can find similar convolution estimates in \cite{kaniel-shinbrot, illner-shinbrot, alonso-gamba}. Here, our contribution is the derivation of these estimates for the ternary case, since this is the first time global well-posedness is studied for such a ternary correction of the Boltzmann equation. The estimates of the ternary term illustrate that consideration of softer potentials is allowed for the ternary collisional operator.
\begin{lemma}\label{convolution lemma} Let $\beta>0$, $q_2\in (-d,1]$ and $q_3\in (-2d,1]$. Then the following  hold 
\begin{enumerate}[(i)]
\item  For any $v\in\mathbb{R}^d$, we have
\begin{equation}\label{conv for binary}
\int_{\mathbb{R}^d}|u|^{q_2} e^{-\beta|v_1|^2}\,dv_1\leq \widetilde{K}_{\beta,q_2}^2(1+|v|^{q_2^+}),
\end{equation}
where $u=v_1-v$, $q_2^+:=\max\{0,q_2\}$, $\widetilde{K}_{\beta,q_2}^2$ is given by
\begin{equation}\label{constant wide K_2}
\widetilde{K}_{\beta,q_2}^2=C_d\bigg[(1+\beta^{-d/2}+\beta^{-\frac{d+1}{2}})\mathds{1}_{q_2>0}(q_2)+(\beta^{-d/2}+\frac{1}{d+q_2})\mathds{1}_{q_2\leq 0}(q_2)\bigg],
\end{equation}
and $C_d$ is an appropriate constant depending on the dimension $d$.
\vspace{0.3cm}
\item For any $v\in\mathbb{R}^d$, we have
\begin{equation}\label{conv for ternary}
\int_{\mathbb{R}^{2d}}|\bm{\widetilde{u}}|^{q_3} e^{-\beta(|v_1|^2+|v_2|^2)}\,dv_1\,dv_2\leq\widetilde{K}_{\beta,q_3}^3(1+|v|^{q_3^+}),
\end{equation}
where $|\bm{\widetilde{u}}|$ is given by \eqref{ternary magnitude pre}, $q_3^+:=\max\{0,q_3\}$, $\widetilde{K}_{\beta,q_3}^3$ is given by
\end{enumerate}
\begin{equation}\label{constant wide K_3}
\widetilde{K}_{\beta,q_3}^3=C_d\bigg[(1+\beta^{-d}+\beta^{-\frac{2d+1}{2}})\mathds{1}_{q_3>0}(q_3)+(\beta^{-d}+\frac{1}{2d+q_3})\mathds{1}_{q_3\leq 0}(q_3)\bigg],
\end{equation}
and $C_d$ is an appropriate constant depending on the dimension $d$.
\end{lemma}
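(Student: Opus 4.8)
The plan is to reduce both estimates to a single scalar integral in the appropriate Euclidean space and split into the ``near'' region where the relative velocity is small and the ``far'' region where it is large. For part (i), after the translation $v_1 \mapsto v_1 + v$ one is left with $\int_{\mathbb{R}^d} |v_1|^{q_2} e^{-\beta|v_1+v|^2}\,dv_1$; I would split this as $\int_{|v_1|\le 1} + \int_{|v_1|>1}$. On $\{|v_1|>1\}$ we have $|v_1|^{q_2}\le 1$ when $q_2\le 0$ and $|v_1|^{q_2}\le \max\{1,|v_1|\}\le \ldots$ is handled by a Gaussian moment when $q_2>0$, so this piece is bounded by $C_d(1+\beta^{-d/2}+\beta^{-(d+1)/2})$ times $(1+|v|^{q_2^+})$ after using $e^{-\beta|v_1+v|^2}\le 1$ and, in the $q_2>0$ case, Peetre-type inequality $|v_1|^{q_2}\lesssim 1+|v_1+v|^{q_2}+|v|^{q_2}$ to extract the $|v|^{q_2}$ factor and absorb the rest into a Gaussian integral. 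On $\{|v_1|\le 1\}$ we bound $e^{-\beta|v_1+v|^2}\le 1$ and integrate $|v_1|^{q_2}$ over the unit ball, which is finite precisely because $q_2>-d$, yielding the $\tfrac{1}{d+q_2}$ term; note this region contributes a constant (no $|v|$ growth), consistent with the stated bound.

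For part (ii) the structure is identical once one observes that $|\bm{\widetilde u}|^2 = |v_1-v|^2 + |v_2-v|^2 + |v_1-v_2|^2$ is, as a quadratic form in $(v_1,v_2)$ centered appropriately, comparable to $|(v_1,v_2) - (\text{something involving }v)|^2$ up to a fixed positive-definite linear change of variables on $\mathbb{R}^{2d}$. Concretely, writing $w_1 = v_1 - v$, $w_2 = v_2 - v$, one has $|\bm{\widetilde u}|^2 = |w_1|^2 + |w_2|^2 + |w_1-w_2|^2 = 2(|w_1|^2 + |w_2|^2 - w_1\cdot w_2)$, which is a positive-definite quadratic form in $(w_1,w_2)\in\mathbb{R}^{2d}$ with eigenvalues bounded above and below by dimensional constants; hence $|\bm{\widetilde u}| \asymp |(w_1,w_2)|$ with constants depending only on $d$. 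Then $\int_{\mathbb{R}^{2d}}|\bm{\widetilde u}|^{q_3} e^{-\beta(|v_1|^2+|v_2|^2)}\,dv_1\,dv_2$ is, up to a $C_d$ factor, of exactly the same form as in part (i) but in dimension $2d$, so the same near/far splitting gives the $\beta^{-d}$, $\beta^{-(2d+1)/2}$, and $\tfrac{1}{2d+q_3}$ terms, with integrability over the unit ball of $\mathbb{R}^{2d}$ requiring $q_3 > -2d$.

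The main technical points to be careful about are: (a) the comparison $|\bm{\widetilde u}| \asymp |(v_1-v, v_2-v)|$ — one needs the precise eigenvalue bounds of the form $Q(w) = 2(|w_1|^2+|w_2|^2 - w_1\cdot w_2)$ to make the change of variables explicit and to track how the Jacobian and the constants fold into $C_d$; (b) for $q_3 > 0$ (and $q_2 > 0$), correctly extracting the factor $(1+|v|^{q_3^+})$, which uses the elementary inequality $(a+b)^{s} \le 2^{s}(a^s+b^s)$ for $s\ge 0$ applied to $|(w_1,w_2)| \le |(v_1,v_2)| + \sqrt{2}|v|$ together with the finiteness of Gaussian moments $\int |(v_1,v_2)|^{q_3} e^{-\beta(|v_1|^2+|v_2|^2)} \lesssim \beta^{-d-q_3/2}$, which must then be bounded by $C_d(1+\beta^{-d} + \beta^{-(2d+1)/2})$ uniformly over $q_3 \in (0,1]$ (here one uses $\beta^{-d-q_3/2} \le \max\{\beta^{-d}, \beta^{-d-1/2}\} \le \beta^{-d} + \beta^{-(2d+1)/2}$ after noting $d + 1/2 \le (2d+1)/2$ trivially, i.e. equality). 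I expect the eigenvalue/comparison step in (a) to be the main obstacle in the sense of requiring genuine care, since everything else is a routine near/far split plus Gaussian moment bookkeeping; the substance of the lemma is precisely that the ellipsoidal quantity $|\bm{\widetilde u}|$ behaves like an ordinary norm on $\mathbb{R}^{2d}$, which is what makes the softer range $q_3 \in (-2d, 1]$ available for the ternary operator.
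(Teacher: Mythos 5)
Your proposal is correct and follows essentially the same route as the paper's proof: a near/far splitting of the singular factor combined with Gaussian and first-moment bounds, with subadditivity of $t\mapsto t^{q}$ for $0<q\le 1$ extracting the $(1+|v|^{q^+})$ growth, and the hypotheses $q_2>-d$, $q_3>-2d$ entering exactly through the integrability of $|y|^{q}$ over a unit ball, which is where the $\frac{1}{d+q_2}$ and $\frac{1}{2d+q_3}$ terms arise. The only organizational difference is in part (ii), where you reduce to a $2d$-dimensional copy of part (i) via the two-sided comparison $|\bm{u}|\le|\bm{\widetilde{u}}|\le\sqrt{3}\,|\bm{u}|$ (your quadratic form has block eigenvalues $1$ and $3$, so the step you flag as the main obstacle is in fact elementary), whereas the paper uses only the lower bound $|\bm{\widetilde{u}}|\ge|\bm{u}|$ for $q_3\le 0$ and direct subadditivity in $(v,v_1,v_2)$ for $q_3>0$; the upper comparison you invoke is precisely inequality \eqref{compared velocities}, which the paper establishes later in the proof of Proposition \ref{bounds on operators proposition}.
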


\begin{proof}  We will rely on the elementary estimate
\begin{equation}\label{estimate on gaussian}
\int_{\mathbb{R}^d}e^{-\beta|v_1|^2}\,dv_1\leq C_d\beta^{-d/2},
\end{equation}
and, given $q\in (0,1]$, on the estimate
\begin{align}
\int_{\mathbb{R}^d}|v_1|^{q}e^{-\beta|v_1|^2}\,dv_1&\leq |B_1^d|+\int_{|v_1|>1}|v_1|^{q}e^{-\beta|v_1|^2}\,dv_1\nonumber\\
&\leq |B_1^d|+\int_{|v_1|>1}|v_1|e^{-\beta|v_1|^2}\,dv_1\nonumber\\
&\leq C_d(1+\beta^{-\frac{d+1}{2}}),\label{estimate on polyn}
\end{align}
where  $|B_1^d|$ denotes the volume of the $d$-dimensional unit ball. 

\textit{(i)} We take separate cases for $q_2\in(-d,1]$
\begin{itemize}
\item $q_2\in (0,1]$: Since $q_2\in(0,1]$, we have 
$$|u|^{q_2}=|v-v_1|^{q_2}\leq (|v|+|v_1|)^{q_2}\leq  |v|^{q_2}+|v_1|^{q_2}.$$
Therefore
\begin{align}
\int_{\mathbb{R}^d}|u|^{q_2} e^{-\beta|v_1|^2}\,dv_1
& \leq  \int_{\mathbb{R}^d} (|v|^{q_2}+|v_1|^{q_2})e^{-\beta|v_1|^2}\,dv_1\nonumber\\
& \leq C_{d}(1+\beta^{-d/2}+\beta^{-\frac{d+1}{2}})(1+|v|^{q_2}),\label{conv binary pos}
\end{align}
 where to obtain \eqref{conv binary pos}, we use the estimates \eqref{estimate on gaussian}-\eqref{estimate on polyn} for $q=q_2$.
 
\item $q_2\in (-d,0]$: Since $q_2\leq 0$, estimate  \eqref{estimate on gaussian} implies
\begin{align}
\int_{\mathbb{R}^d}|v-v_1|^{q_2} e^{-\beta|v_1|^2}\,dv_1&\leq\int_{|v-v_1|>1}e^{-\beta|v_1|^2}\,dv_1+\int_{|v-v_1|<1}|v-v_1|^{q_2} \,dv_1\nonumber\\
&= C_d\beta^{-d/2}+\int_{|y|<1}|y|^{q_2}\,dy\nonumber\\
&= C_d\beta^{-d/2}+C_d\int_0^1 r^{d-1+q_2}\,dr\nonumber\\
&= C_{d}\left(\beta^{-d/2}+\frac{1}{d+q_2}\right),\label{conv binary neg}
\end{align}
since we have assumed $q_2>-d$.
\end{itemize}

\textit{(ii)} We take separate cases for $q_3\in(-2d,1]$
\begin{itemize}
\item $q_3\in (0,1]$: Since $q_3\in (0,1]$, we have 

\begin{align*}
|\bm{\widetilde{u}}|^{q_3}&=\left(|v-v_1|^2+|v-v_2|^2+|v_1-v_2|^2\right)^{q_3/2}\\
&\leq 2^{q_3}(|v|^2+|v_1|^2+|v_2|^2)^{q_3/2}\\
\leq 2(|v|^{q_3}+|v_1|^{q_3}+|v_2|^{q_3}).
\end{align*}

Therefore, Fubini's Theorem and estimates \eqref{estimate on gaussian}-\eqref{estimate on polyn} applied for $q=q_3$ imply
\begin{align}
\int_{\mathbb{R}^{2d}}|\bm{\widetilde{u}}|^{q_3} e^{-\beta(|v_1|^2+|v_2|^2)}\,dv_1\,dv_2&\leq 2\int_{\mathbb{R}^{2d}} (|v|^{q_3}+|v_1|^{q_3}+|v_2|^{q_3})e^{-\beta(|v_1|^2+|v_2|^2)}\,dv_1\,dv_2\nonumber\\
&\leq C_{d}(1+\beta^{-d}+\beta^{-\frac{2d+1}{2}})(1+|v|^{q_3}).\label{conv tern pos}
\end{align}
\item $q_3\in (-2d,0]$: 
Recalling \eqref{ternary magnitude pre} and using the fact that $q_3\leq 0$,  Fubini's Theorem and  estimates \eqref{estimate on gaussian}-\eqref{estimate on polyn} imply
\begin{align}
\int_{\mathbb{R}^{2d}}&|\bm{\widetilde{u}}|^{q_3} e^{-\beta(|v_1|^2+|v_2|^2)}\,dv_1\,dv_2\leq \int_{\mathbb{R}^{2d}}|\bm{u}|^{q_3} e^{-\beta(|v_1|^2+|v_2|^2)}\,dv_1\,dv_2\nonumber\\
&\leq\int_{|\bm{u}|>1} e^{-\beta(|v_1|^2+|v_2|^2)}\,dv_1\,dv_2+\int_{|\bm{u}|<1}|\bm{u}|^{q_3} \,dv_1\,dv_2\nonumber\\
&\leq C_d\beta^{-d}+\int_{|\bm{u}|<1}|\bm{u}|^{q_3} \,dv_1\,dv_2\nonumber\\
&=C_d\beta^{-d}+\int_{|\bm{y}|<1}|\bm{y}|^{q_3}\,d\bm{y}\nonumber\\
&\leq C_d\beta^{-d}+C_d\int_0^{1}r^{2d-1+q_3}\,dr\nonumber\\
&= C_d\left(\beta^{-d}+\frac{1}{2d+q_3}\right),\label{conv tern neg}
\end{align}
since we have assumed $q_3>-2d.$ 
\end{itemize}
Combining \eqref{conv binary pos}-\eqref{conv tern pos} and \eqref{conv tern neg}, we obtain \eqref{conv for binary}-\eqref{conv for ternary}.
\end{proof}

\subsection{$L^\infty L^1$ estimates}\label{L_inf L1 estimates}
Here we prove uniform in time, space-velocity $L^1$ estimates on the transported gain and loss operators. These estimates will be of fundamental importance for the convergence of the iteration to the global solution. As we will see, the ternary collisional operator introduces some asymmetry which is not present in the binary case.  For this reason, when we use Lemma \ref{convolution lemma}, we first obtain estimates in asymmetric form (see Lemma \ref{bound on R tilde lemma}).   However, we will need a symmetric version of this estimate which we derive in Proposition \ref{permutation lemma}. To achieve that, we   crucially rely on  properties of the ternary interactions.

Recall from \eqref{exponents} the fixed cross-section exponents $\gamma_2\in (-d+1,1]$ and $\gamma_3\in (-2d+1,1]$. For convenience, we define the function 
\begin{equation}\label{def of p}
p_{\gamma_2,\gamma_3}(v)=1+|v|^{\gamma_2^+}+|v|^{\gamma_3^+}.
\end{equation}
Notice that, given $\alpha>0$, $\beta>0$, we have
\begin{equation}\label{product in L1}
p_{\gamma_2,\gamma_3}M_{\alpha,\beta}\in L^1_{x,v}.
\end{equation}
Using Lemma \ref{convolution lemma} for $q_2=\gamma_2$ and $q_3=\gamma_3$, we obtain some  the assymetric estimates mentioned above.
\begin{lemma}\label{bound on R tilde lemma}Let $0<T\leq\infty$ and $\alpha,\beta>0$. Then there is a constant $C_\beta>0$ such that the following hold
\begin{enumerate}[(i)]
\item For any $g,h\in\mathcal{F}_T^+$, with $g^\#,h^\#\in L^\infty([0,T),\mathcal{M}_{\alpha,\beta}^+)$, and any $t\in[0,T)$, we have
\begin{align}
0\leq R_2^\#(g)(t)&\leq C_\beta|||g^\#|||_\infty p_{\gamma_2,\gamma_3},\label{bound R2}\\
0\leq R_3^\#(g,h)(t)&\leq C_\beta|||g^\#|||_\infty |||h^\#|||_\infty p_{\gamma_2,\gamma_3},\label{bound R3}\\
0\leq R^\#(g,h)(t)&\leq C_\beta|||g^\#|||_{\infty}(1+|||h^\#|||_{\infty})p_{\gamma_2,\gamma_3}.\label{bound R}
\end{align}

\item For any $f,g,h\in\mathcal{F}_T^+$, with $f^\#,g^\#,h^\#\in L^\infty([0,T),\mathcal{M}_{\alpha,\beta}^+)$, and $t\in[0,T)$, we have 
\begin{align}
\|L_2^\#(f,g)(t)\|_{L^1_{x,v}},\text{ }\|G_2^\#(f,g)(t)\|_{L^1_{x,v}}&\leq C_\beta|||g^\#|||_{\infty}\|f^\#(t)p_{\gamma_2,\gamma_3}\|_{L^1_{x,v}},\label{bound L2}\\
\|L_3^\#(f,g,h)(t)\|_{L^1_{x,v}},\text{ }\|G_3^\#(f,g,h)(t)\|_{L^1_{x,v}}&\leq C_\beta |||g^\#|||_{\infty}|||h^\#|||_{\infty}\|f^\#(t)p_{\gamma_2,\gamma_3}\|_{L^1_{x,v}},\label{bound L3}\\
\|L^\#(f,g,h)(t)\|_{L^1_{x,v}},\text{ }\|G^\#(f,g,h)(t)\|_{L^1_{x,v}}&\leq C_\beta|||g^\#|||_{\infty}(1+|||h^\#|||_{\infty})\|f^\#(t)p_{\gamma_2,\gamma_3}\|_{L^1_{x,v}}.\label{bound L}
\end{align} 
 Moreover,
 \begin{equation}\label{L in L infty}
  L^\#(f,g,h), \text{ }G^\#(f,g,h)\in L^\infty([0,T),L^{1,+}_{x,v}).
  \end{equation}
\end{enumerate}
\end{lemma}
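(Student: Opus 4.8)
The plan is to deduce Lemma \ref{bound on R tilde lemma} directly from the convolution estimate in Lemma \ref{convolution lemma}, combined with the factorization \eqref{connecting L-R sharp} of the loss operator, the $L^1$-norm equality between gain and loss from Proposition \ref{monotonicity proposition sharp}(v), and the definition \eqref{def of p} of $p_{\gamma_2,\gamma_3}$. Throughout, I set $C_\beta := C_d\big[\|b_2\|_{L^1(\mathbb{S}^{d-1})}\widetilde{K}_{\beta,\gamma_2}^2 + \|b_3\|_{L^1(\mathbb{S}^{2d-1})}\widetilde{K}_{\beta,\gamma_3}^3\big]$, or whatever aggregate constant makes all the inequalities below go through; the precise value is not important, only that it is finite and positive under the standing assumptions \eqref{Grad binary}, \eqref{Grad ternary} and \eqref{exponents} (note $\gamma_2 > -d+1 > -d$ and $\gamma_3 > -2d+1 > -2d$, so Lemma \ref{convolution lemma} applies with $q_2=\gamma_2$, $q_3=\gamma_3$).

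\textbf{Part (i).} For \eqref{bound R2}, I start from \eqref{R sharp}: $R_2^\#(g)(t,x,v) = \int_{\mathbb{S}^{d-1}\times\mathbb{R}^d} B_2(u,\omega)\, g(t,x+tv,v_1)\,d\omega\,dv_1$. Since $g^\# \in L^\infty([0,T),\mathcal{M}_{\alpha,\beta}^+)$, we have $0 \le g(t,x+tv,v_1) = g^\#(t,x,v_1) \le |||g^\#|||_\infty\, M_{\alpha,\beta}(x,v_1) \le |||g^\#|||_\infty\, e^{-\beta|v_1|^2}$. Plugging in the form \eqref{form of B_2} of $B_2$ and integrating the angular part using \eqref{Grad binary}, we are left with $R_2^\#(g)(t,x,v) \le |||g^\#|||_\infty \|b_2\|_{L^1(\mathbb{S}^{d-1})} \int_{\mathbb{R}^d} |u|^{\gamma_2} e^{-\beta|v_1|^2}\,dv_1 \le |||g^\#|||_\infty \|b_2\|_{L^1(\mathbb{S}^{d-1})} \widetilde{K}_{\beta,\gamma_2}^2 (1 + |v|^{\gamma_2^+})$ by \eqref{conv for binary}. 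Since $1 + |v|^{\gamma_2^+} \le p_{\gamma_2,\gamma_3}(v)$, this gives \eqref{bound R2}. The estimate \eqref{bound R3} is entirely analogous, using instead the form \eqref{form of B_3} of $B_3$, the bound $0 \le g(t,x+tv,v_1) h(t,x+tv,v_2) \le |||g^\#|||_\infty |||h^\#|||_\infty e^{-\beta(|v_1|^2+|v_2|^2)}$, the angular integrability \eqref{Grad ternary}, and then \eqref{conv for ternary} with $q_3 = \gamma_3$; note that here $\bm{\bar u}$ plays the role of the direction $\bm\nu$ on the ellipsoid $\mathbb{E}_1^{2d-1}$ appearing in \eqref{Grad ternary}, which is legitimate since $\bm{\bar u}\in\mathbb{E}^{2d-1}$ by \eqref{ternary relative velocities unit}. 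Finally \eqref{bound R} follows by adding \eqref{bound R2} and \eqref{bound R3} via $R^\# = R_2^\# + R_3^\#$ and the crude bound $|||g^\#|||_\infty \le |||g^\#|||_\infty(1+|||h^\#|||_\infty)$ for the binary piece, after adjusting $C_\beta$.

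\textbf{Part (ii).} For the loss operators I use \eqref{connecting L-R sharp}: $L_2^\#(f,g)(t) = f^\#(t)\, R_2^\#(g)(t)$, so pointwise $0 \le L_2^\#(f,g)(t,x,v) \le C_\beta |||g^\#|||_\infty\, f^\#(t,x,v)\, p_{\gamma_2,\gamma_3}(v)$ by \eqref{bound R2}; integrating in $(x,v)$ gives the $L_2^\#$ bound in \eqref{bound L2}. The $G_2^\#$ bound then follows immediately because $\|G_2^\#(f,g)(t)\|_{L^1_{x,v}} = \|L_2^\#(f,g)(t)\|_{L^1_{x,v}}$ by Proposition \ref{monotonicity proposition sharp}(v), equation \eqref{G leq L sharp}. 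The ternary estimates \eqref{bound L3} are handled the same way via $L_3^\#(f,g,h)(t) = f^\#(t) R_3^\#(g,h)(t)$ together with \eqref{bound R3} and the ternary part of \eqref{G leq L sharp}, and \eqref{bound L} follows by addition using \eqref{bound R} (and again \eqref{G leq L sharp} for the gain). For the last claim \eqref{L in L infty}: the right-hand side of \eqref{bound L} is $C_\beta |||g^\#|||_\infty (1+|||h^\#|||_\infty) \|f^\#(t) p_{\gamma_2,\gamma_3}\|_{L^1_{x,v}}$, and since $f^\#\in L^\infty([0,T),\mathcal{M}_{\alpha,\beta}^+)$ we have $f^\#(t,x,v) p_{\gamma_2,\gamma_3}(v) \le |||f^\#|||_\infty\, p_{\gamma_2,\gamma_3}(v) M_{\alpha,\beta}(x,v)$, which lies in $L^1_{x,v}$ by \eqref{product in L1}; hence $\sup_{t\in[0,T)}\|L^\#(f,f,f)(t)\|_{L^1_{x,v}} < \infty$ and likewise for $G^\#$, and positivity is clear from Proposition \ref{monotonicity proposition sharp}, giving membership in $L^\infty([0,T),L^{1,+}_{x,v})$.

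There is no real obstacle here; the content of the lemma is essentially bookkeeping once Lemma \ref{convolution lemma} is in hand. The one point requiring a little care is the ternary angular integration: one must confirm that after bounding the velocity integrand by a Gaussian, the remaining $\bm\omega$-integral of $b_3(\bm{\bar u}\cdot\bm\omega, \omega_1\cdot\omega_2)$ is controlled uniformly by $\|b_3\|_{L^1(\mathbb{S}^{2d-1})}$ as defined in \eqref{Grad ternary} — this is exactly why the supremum over $\bm\nu\in\mathbb{E}_1^{2d-1}$ is built into that norm, and \eqref{ternary magnitude pre-2} guarantees the first argument stays in $[-1,1]$ while Young's inequality guarantees the second stays in $[-\tfrac12,\tfrac12]$, so $b_3$ is always evaluated in its domain. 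The other mild subtlety is that the convolution bound \eqref{conv for ternary} is stated in terms of $|\bm{\widetilde u}|$, which is precisely the quantity appearing in $B_3$ via \eqref{form of B_3}, so no further comparison between $|\bm u|$ and $|\bm{\widetilde u}|$ is needed in the positive-exponent regime; in the negative-exponent regime the proof of Lemma \ref{convolution lemma} already absorbed the comparison $|\bm{\widetilde u}|\ge|\bm u|$.
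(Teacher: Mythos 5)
Your proof is correct and follows essentially the same route as the paper's: bound the integrand by the Maxwellian through $|||\cdot|||_\infty$, integrate the angular variables using \eqref{Grad binary} and \eqref{Grad ternary}, apply Lemma \ref{convolution lemma} with $q_2=\gamma_2$, $q_3=\gamma_3$, factor the loss operators through \eqref{connecting L-R sharp}, transfer the bounds to the gain operators via \eqref{G leq L sharp}, and conclude \eqref{L in L infty} from \eqref{product in L1}. One small slip to correct: $g(t,x+tv,v_1)\neq g^\#(t,x,v_1)$ in general (the correct identity is $g(t,x+tv,v_1)=g^\#(t,x+t(v-v_1),v_1)$), but since you only retain the velocity Gaussian, the bound $g(t,x+tv,v_1)\le |||g^\#|||_\infty e^{-\beta|v_1|^2}$ that you actually use remains valid and the rest of the argument is unaffected.
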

\begin{proof}
We prove each claim separately.

\textit{Proof of \textit{(i)}}: Positivity follows immediately by the monotonicity of $R_2^\#,R_3^\#,R^\#$ on $\mathcal{F}_T^+$ (see  Proposition \ref{monotonicity proposition sharp}). Since $g^\#,h^\#\in L^\infty([0,T),\mathcal{M_{\alpha,\beta}^+})$,  for any $t\in[0,T)$, we have
\begin{equation}\label{bound on R only with velocities}
\begin{aligned}
0&\leq g(t,x,v)\leq |||g^\#|||_{\infty}e^{-\alpha |x-tv|^2-\beta|v|^2},\quad\text{for a.e. $(x,v)\in\mathbb{R}^d\times\mathbb{R}^d$},\\
0&\leq h(t,x,v)\leq |||h^\#|||_{\infty}e^{-\alpha |x-tv|^2-\beta|v|^2},\quad\text{for a.e. $(x,v)\in\mathbb{R}^d\times\mathbb{R}^d$}.
\end{aligned}
\end{equation}
Recalling the fact that $R^\#(g,h)=R_2^\#(g)+R_3^\#(g,h)$,  it suffices to prove the estimates \eqref{bound R2}-\eqref{bound R3}.

Let us first prove \eqref{bound R2}. For a.e. $(x,v)\in\mathbb{R}^{2d}$, estimate  \eqref{bound on R only with velocities} and  part \textit{(i)} of Lemma \ref{convolution lemma}, applied for $q_2=\gamma_2$ and $q_3=\gamma_3$, imply
\begin{align}
R_2(g)(t,x,v)&\leq \|b_2\|_{L^1(\mathbb{S}^{d-1})}\int_{\mathbb{R}^d}|u|^{\gamma_2}g(t,x,v_1)\,dv_1\nonumber\\
&\leq \|b_2\|_{L^1(\mathbb{S}^{d-1})}|||g^\#|||_{\infty}\int_{\mathbb{R}^d} |u|^{\gamma_2}e^{-\beta |v_1|^2}\,dv_1\nonumber\\
&\leq C_\beta |||g^\#|||_{\infty}(1+|v|^{\gamma_2^+})\nonumber\\
&\leq C_\beta|||g^\#|||_{\infty}p_{\gamma_2,\gamma_3}(v)\label{final bound R_2 binary}.
\end{align}
Since the right hand side of \eqref{final bound R_2 binary} does not depend on $x$, we obtain \eqref{bound R2}.

Let us now prove \eqref{bound R3}.  For a.e. $(x,v)\in\mathbb{R}^{2d}$, estimate  \eqref{bound on R only with velocities} and  part \textit{(ii)} of Lemma \ref{convolution lemma}, applied for $q_2=\gamma_2$ and $q_3=\gamma_3$, imply
\begin{align}
R_3(g,h)(t,x,v)&\leq \|b_3\|_{L^1(\mathbb{S}^{2d-1})}\int_{\mathbb{R}^{2d}}|\bm{\widetilde{u}}|^{\gamma_3}g(t,x,v_1)h(t,x,v_2)\,dv_1\,dv_2\nonumber\\
&\leq \|b_3\|_{L^1(\mathbb{S}^{2d-1})}|||g^\#|||_{\infty}|||h^\#|||_{\infty}\int_{\mathbb{R}^{2d}}|\bm{\widetilde{u}}|^{\gamma_3}e^{-\beta (|v_1|^2+|v_2|^2)}\,dv_1\,dv_2\nonumber\\
&\leq C_\beta|||g^\#|||_{\infty}|||h^\#|||_{\infty}(1+|v|^{\gamma_3^+})\nonumber\\
&\leq C_\beta|||g^\#|||_{\infty}|||h^\#|||_{\infty}p_{\gamma_2,\gamma_3}(v).\label{final bound R_3 ternary}
\end{align}
 Since the right hand side of \eqref{final bound R_3 ternary} does not depend on $x$, we obtain \eqref{bound R3}. 
 
 Estimate \eqref{bound R} follows by the fact that $R^\#(g,h)=R_2^\#(g)+R_3^\#(g,h)$.

\textit{Proof of \textit{(ii)}}:  We first prove the claim for the loss operators. Positivity follows immediately from the monotonicity of $L_2^\#,L_3^\#,L^\#$ on $\mathcal{F}_T^+$. Estimates \eqref{bound L2}-\eqref{bound L} follow directly from \eqref{connecting L-R sharp} and part \textit{(i)}. Moreover, estimate \eqref{bound L} implies \eqref{L in L infty}
since  $f^\#,g^\#,h^\#\in L^\infty([0,T),\mathcal{M}_{\alpha,\beta}^+)$ and $p_{\gamma_2,\gamma_3}M_{\alpha,\beta}\in L^1_{x,v}$ by \eqref{product in L1}.

For the gain operators,  positivity   follows immediately from the monotonicity of $G_2^\#,G_3^\#,G^\#$ on $\mathcal{F}_T^+$. Estimates \eqref{bound L2}-\eqref{bound L} and \eqref{L in L infty} come from \eqref{G leq L sharp}  and the estimates for the loss operators.
\end{proof}

Notice that bounds \eqref{bound L2}-\eqref{bound L} are only with respect to the first argument $f$. Although this is not an issue in the binary case where the gain and loss collisional operators are symmetric with respect to the inputs in the $L^1$-norm, this is not the case  for the ternary operators. In order to treat this assymetry, we need to derive  estimates with respect to all three inputs of the ternary gain and loss  collisional operators. This is achieved in the following result
\begin{proposition}\label{permutation lemma} Let $0<T\leq\infty$ and $\alpha,\beta>0$. Consider $f_1,f_2,f_3\in\mathcal{F}_T^+$ with $f_1^\#,f_2^\#,f_3^\#\in L^\infty([0,T),\mathcal{M}_{\alpha,\beta}^+)$. Then, there is a constant $C_\beta>0$ such that, for any permutation $\pi:\{1,2,3\}\to\{1,2,3\}$,  the following  estimates hold for any $t\in[0,T)$
\begin{align}
\|L_2^\#(f_1,f_2)(t)\|_{L^1_{x,v}},\text{ }\|G_2^\#(f_1,f_2)(t)\|_{L^1_{x,v}}&\leq C_\beta|||f_{\pi_1}^\#|||_{\infty}\|f^\#_{\pi_2}(t)p_{\gamma_2,\gamma_3}\|_{L^1_{x,v}}\label{perm estimate L2},\\
\|L_3^\#(f_1,f_2,f_3)(t)\|_{L^1_{x,v}},\text{ }\|G_3^\#(f_1,f_2,f_3)(t)\|_{L^1_{x,v}}&\leq C_\beta|||f_{\pi_1}^\#|||_{\infty}|||f_{\pi_2}^\#|||_{\infty}\|f^\#_{\pi_3}(t)p_{\gamma_2,\gamma_3}\|_{L^1_{x,v}},\label{perm estimate L3}\\
\|L^\#(f_1,f_2,f_3)(t)\|_{L^1_{x,v}},\text{ }\|G^\#(f_1,f_2,f_3)(t)\|_{L^1_{x,v}}&\leq C_\beta|||f_{\pi_1}^\#|||_{\infty}(1+|||f_{\pi_2}^\#|||_{\infty})\|f^\#_{\pi_3}(t)p_{\gamma_2,\gamma_3}\|_{L^1_{x,v}}.\label{perm estimate L}
\end{align}
\end{proposition}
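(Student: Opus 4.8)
The plan is to reduce everything to Lemma~\ref{bound on R tilde lemma} by exploiting the symmetry of the full $4d$-dimensional integrals defining the $L^1$-norms of the loss operators (and, via Proposition~\ref{monotonicity proposition sharp}(v), the gain operators). First I would recall from \eqref{L1 change of variables} that $\|L_3^\#(f_1,f_2,f_3)(t)\|_{L^1_{x,v}}=\|L_3(f_1,f_2,f_3)(t)\|_{L^1_{x,v}}$, so that it suffices to prove the bounds for the untransported operators. Then, writing out
$$\|L_3(f_1,f_2,f_3)(t)\|_{L^1_{x,v}}=\int_{\mathbb{R}^{4d}\times\mathbb{S}^{2d-1}}B_3(\bm{u},\bm{\omega})f_1(t,x,v)f_2(t,x,v_1)f_3(t,x,v_2)\,d\bm{\omega}\,dv_1\,dv_2\,dv\,dx,$$
the key observation is that the cross-section $B_3(\bm{u},\bm{\omega})=|\bm{\widetilde{u}}|^{\gamma_3}b_3(\bm{\bar u}\cdot\bm{\omega},\omega_1\cdot\omega_2)$ depends on $v,v_1,v_2$ only through $|\bm{\widetilde u}|^2=|v-v_1|^2+|v-v_2|^2+|v_1-v_2|^2$ (after integrating $b_3$ over $\mathbb{S}^{2d-1}$, which produces the constant $\|b_3\|_{L^1(\mathbb{S}^{2d-1})}$), and this quantity is fully symmetric under permutations of $(v,v_1,v_2)$. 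Hence for any permutation $\pi$ of $\{1,2,3\}$, relabeling the dummy velocity variables shows
$$\|L_3(f_1,f_2,f_3)(t)\|_{L^1_{x,v}}=\|L_3(f_{\pi_1},f_{\pi_2},f_{\pi_3})(t)\|_{L^1_{x,v}}$$
(with the understanding that the first slot of $L_3$ is the one carrying the untransported $f(t,x,v)$ factor in \eqref{L_3}). Applying the already-established estimate \eqref{bound L3} of Lemma~\ref{bound on R tilde lemma} to the permuted triple $(f_{\pi_1},f_{\pi_2},f_{\pi_3})$ immediately yields \eqref{perm estimate L3}.

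For the binary estimate \eqref{perm estimate L2} the situation is even simpler: $\|L_2(f_1,f_2)(t)\|_{L^1_{x,v}}$ is symmetric in $f_1,f_2$ because $B_2(u,\omega)=|v-v_1|^{\gamma_2}b_2(\hat u\cdot\omega)$ depends on $(v,v_1)$ only through $|v-v_1|$, which is symmetric; so swapping $v\leftrightarrow v_1$ gives $\|L_2(f_1,f_2)(t)\|_{L^1_{x,v}}=\|L_2(f_2,f_1)(t)\|_{L^1_{x,v}}$, and \eqref{perm estimate L2} follows from \eqref{bound L2}. The mixed estimate \eqref{perm estimate L} follows by summing \eqref{perm estimate L2} and \eqref{perm estimate L3} and bounding, for the chosen permutation $\pi$, $|||f_{\pi_1}^\#|||_\infty\le|||f_{\pi_1}^\#|||_\infty(1+|||f_{\pi_2}^\#|||_\infty)$ in the binary term, exactly as in the proof of \eqref{bound L} from \eqref{bound L2}--\eqref{bound L3}. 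Finally, the corresponding bounds for $G_2^\#$, $G_3^\#$, $G^\#$ follow from the $L^1$-identities \eqref{G leq L sharp} in Proposition~\ref{monotonicity proposition sharp}(v), which equate the $L^1$-norm of each gain operator with that of the matching loss operator, so no separate argument is needed.

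The only genuinely delicate point is making the permutation-invariance of the $L^1$-norm precise: one must be careful that in \eqref{L_3} the three inputs enter the integrand asymmetrically only through the \emph{labels} $v,v_1,v_2$ attached to $f,g,h$, while the weight $B_3(\bm u,\bm\omega)$ is symmetric in these labels once $\bm\omega$ is integrated out. Concretely, for a transposition such as $(1\,2)$ one changes variables $(v,v_1)\mapsto(v_1,v)$ (keeping $v_2,x$ fixed), notes that $|\bm{\widetilde u}|$ is invariant, and that $\int_{\mathbb{S}^{2d-1}}b_3(\bm{\bar u}\cdot\bm\omega,\omega_1\cdot\omega_2)\,d\bm\omega$ is dominated by $\|b_3\|_{L^1(\mathbb{S}^{2d-1})}$ uniformly in $\bm{\bar u}\in\mathbb{E}^{2d-1}$; general $\pi$ is a composition of such transpositions. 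Apart from this bookkeeping, the proposition is a direct corollary of Lemma~\ref{bound on R tilde lemma} and Proposition~\ref{monotonicity proposition sharp}, so I expect the write-up to be short.
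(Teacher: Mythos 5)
Your route is, in spirit, the paper's own: reduce the gain bounds to the loss bounds via \eqref{G leq L sharp}, treat $L_2$ by the swap $(v,v_1)\to(v_1,v)$, treat $L_3$ by exploiting the permutation symmetry of $|\bm{\widetilde{u}}|$, and sum for the mixed operator. However, the pivot of your main argument --- the exact identity $\|L_3(f_1,f_2,f_3)(t)\|_{L^1_{x,v}}=\|L_3(f_{\pi_1},f_{\pi_2},f_{\pi_3})(t)\|_{L^1_{x,v}}$ --- is false for general $b_3$. Integrating $b_3(\bm{\bar{u}}\cdot\bm{\omega},\omega_1\cdot\omega_2)$ over $\mathbb{S}^{2d-1}$ does not produce the constant $\|b_3\|_{L^1(\mathbb{S}^{2d-1})}$; it produces a function of $\bm{\bar{u}}$, and $\bm{\bar{u}}$ is not invariant under relabelling the velocities. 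Concretely, if $b_3$ depends only on its first argument, the $\bm{\omega}$-integral is a function of $|\bm{\bar{u}}|$ alone, and the swap $v\leftrightarrow v_1$ sends $(\nu_1,\nu_2):=\bm{\bar{u}}$ to $(-\nu_1,\nu_2-\nu_1)$, which changes $|\bm{\bar{u}}|$; so the two $L^1$-norms genuinely differ. This is exactly the asymmetry the paper's proof flags as the subtle point. (Your binary justification has a smaller slip of the same kind: $B_2$ does not depend on $(v,v_1)$ only through $|v-v_1|$; the invariance of the $L^1$-norm under the swap uses the evenness \eqref{binary micro2} of $b_2$, or equivalently the substitution $\omega\to-\omega$ on $\mathbb{S}^{d-1}$.)

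Your closing paragraph contains the right repair --- dominate the $\bm{\omega}$-integral by $\|b_3\|_{L^1(\mathbb{S}^{2d-1})}$ uniformly over $\mathbb{E}^{2d-1}$, so that only the fully symmetric weight $|\bm{\widetilde{u}}|^{\gamma_3}$ of \eqref{ternary magnitude pre} remains, and then change variables --- but the deduction "now apply \eqref{bound L3} to the permuted triple" still does not close the argument: after the domination, the symmetrized quantity is an upper bound for both $\|L_3(f_1,f_2,f_3)(t)\|_{L^1_{x,v}}$ and $\|L_3(f_{\pi_1},f_{\pi_2},f_{\pi_3})(t)\|_{L^1_{x,v}}$, so estimate \eqref{bound L3} applied to the permuted triple gives no control over it. What is needed is to rerun the final step of the proof of Lemma \ref{bound on R tilde lemma} on the symmetrized integral: bound the two factors whose velocities are being integrated by $e^{-\beta|\cdot|^2}$ times their $\mathcal{M}_{\alpha,\beta}$-norms, apply part (ii) of Lemma \ref{convolution lemma} with $q_3=\gamma_3$, and keep the remaining factor in $L^1_{x,v}$ against $p_{\gamma_2,\gamma_3}$; this is precisely how the paper proves \eqref{perm for L3 g} and \eqref{perm for L3 h}. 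With that replacement (and the corresponding compatibility of the chosen permutation in the binary and mixed bounds, which the paper also only uses for permutations of the arguments actually carrying the $L^1$ factor), your argument coincides with the paper's.
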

\begin{proof}
By \eqref{G leq L sharp},  triangle inequality and part \textit{(ii)} of Lemma \ref{bound on R tilde lemma}, the proof of \eqref{perm estimate L2}-\eqref{perm estimate L} for the loss term reduces to showing the following estimates
\begin{align}
\|L_2^\#(f_1,f_2)(t)\|_{L^1_{x,v}}&\leq C_\beta|||f_1^\#|||_{\infty}\|f_2^\#p_{\gamma_2,\gamma_3}\|_{L^1_{x,v}}\label{perm for L2},\\
\|L_3^\#(f_1,f_2,f_3)(t)\|_{L^1_{x,v}}&\leq C_\beta|||f_1^\#|||_{\infty}|||f_3^\#|||_{\infty}\|f_2^\#p_{\gamma_2,\gamma_3}\|_{L^1_{x,v}}\label{perm for L3 g},\\
\|L_3^\#(f_1,f_2,f_3)(t)\|_{L^1_{x,v}}&\leq C_\beta|||f_1^\#|||_{\infty}|||f_2^\#|||_{\infty}\|f_3^\#p_{\gamma_2,\gamma_3}\|_{L^1_{x,v}}.\label{perm for L3 h}
\end{align} 
$\bullet$ Proof of \eqref{perm for L2}:  Performing the involutionary change of variables $(v,v_1)\to (v_1,v)$ and using \eqref{binary micro2}, for any $t\in[0,T)$, we have
$$\|L_2(f_1,f_2)(t)\|_{L^1_{x,v}}=\|L_2(f_2,f_1)(t)\|_{L^1_{x,v}}\Rightarrow \|L_2^\#(f_1,f_2)(t)\|_{L^1_{x,v}}=\|L_2^\#(f_2,f_1)(t)\|_{L^1_{x,v}}.$$
 The claim comes from part \textit{(ii)} of Lemma \ref{bound on R tilde lemma}.

$\bullet$ Proof of \eqref{perm for L3 g}: Here the proof is subtler because the inner product $\bm{\bar{u}}\cdot\bm{\omega}$ is not symmetric upon renaming the velocities.
However, we will strongly rely on the fact that the expression 
$$|\bm{\widetilde{u}}|^2=|v-v_1|^2+|v-v_2|^2+|v_1-v_2|^2,$$
given in \eqref{ternary magnitude pre} is symmetric with respect to the inputs $v,v_1,v_2$.

Since $f^\#_1,f^\#_3\in L^\infty([0,T),\mathcal{M_{\alpha,\beta}^+})$,  for any $t\in[0,T)$ and a.e. $(x,v)\in\mathbb{R}^d\times\mathbb{R}^d$, we have
\begin{equation}\label{bound on R only with velocities f,h}
\begin{aligned}
0&\leq f_i(t,x,v)\leq  |||f_i^\#|||_{\infty}e^{-\alpha |x-tv|^2-\beta|v|^2}\leq  |||f_i^\#|||_{\infty}e^{-\beta|v|^2},\quad\forall i\in\{1,3\}.\\
\end{aligned}
\end{equation}
Using \eqref{L1 change of variables},  the change of variables $(v,v_1)\to (v_1,v)$, bound \eqref{bound on R only with velocities f,h},  part \textit{(ii)} of Lemma \ref{convolution lemma}, and the fact that $p_{\gamma_2,\gamma_3}$ is invariant in space, we obtain
\begin{align}
\|&L_3^\#(f_1,f_2,f_3)(t)\|_{L^1_{x,v}}=\|L_3(f_1,f_2,f_3)\|_{L^1_{x,v}}\nonumber\\
&\leq \|b_3\|_{L^1(\mathbb{S}^{2d-1})} \int_{\mathbb{R}^{4d}}|\bm{\widetilde{u}}|^{\gamma_3}|f_1(t,x,v)||f_2(t,x,v_1)||f_3(t,x,v_2)|\,dv_1\,dv_2\,dv\,dx\nonumber\\
&= \|b_3\|_{L^1(\mathbb{S}^{2d-1})}\int_{\mathbb{R}^{4d}}(|v-v_1|^2+|v-v_2|^2+|v_1-v_2|^2)^{\gamma_3/2}|f_1(t,x,v)||f_2(t,x,v_1)||f_3(t,x,v_2)|\,dv_1\,dv_2\,dv\,dx\nonumber\\
&=\|b_3\|_{L^1(\mathbb{S}^{2d-1})}\int_{\mathbb{R}^{4d}}(|v-v_1|^2+|v-v_2|^2+|v_1-v_2|^2)^{\gamma_3/2}|f_2(t,x,v)|f_1(t,x,v_1)||f_3(t,x,v_2)|\,dv_1\,dv_2\,dv\,dx\nonumber\\
&\leq \|b_3\|_{L^1(\mathbb{S}^{2d-1})} \int_{\mathbb{R}^{4d}}|\bm{\widetilde{u}}|^{\gamma_3}|f_2(t,x,v)||f_1(t,x,v_1)||f_3(t,x,v_2)|\,dv_1\,dv_2\,dv\,dx\nonumber\\
&=\|b_3\|_{L^1(\mathbb{S}^{2d-1})}\int_{\mathbb{R}^d\times\mathbb{R}^d}|f_2(t,x,v)|\int_{\mathbb{R}^{2d}}|\bm{\widetilde{u}}|^{\gamma_3}|f_1(t,x,v_1)||f_3(t,x,v_2)|\,dv_1\,dv_2\,dv\,dx\nonumber\\
&\leq \|b_3\|_{L^1(\mathbb{S}^{2d-1})}|||f_1^\#|||_{\infty}|||f_3^\#|||_{\infty} \int_{\mathbb{R}^d\times\mathbb{R}^d}|f_2(t,x,v)|\int_{\mathbb{R}^{2d}}|\bm{\widetilde{u}}|^{\gamma_3}e^{-\beta(|v_1|^2+|v_2|^2)}\,dv_1\,dv_2\,dv\,dx\nonumber\\
&\leq  C_\beta |||f_1^\#|||_{\infty}|||f_3^\#|||_{\infty}\|f_2(t)p_{\gamma_2,\gamma_3}\|_{L^1_{x,v}}\nonumber\\
&=  C_\beta |||f_1^\#|||_{\infty}|||f_3^\#|||_{\infty}\|(f_2(t)p_{\gamma_2,\gamma_3})^\#\|_{L^1_{x,v}}\nonumber\\
&= C_\beta |||f_1^\#|||_{\infty}|||f_3^\#|||_{\infty}\|f_2^\#(t) p_{\gamma_2,\gamma_3}\|_{L^1_{x,v}}.\nonumber
\end{align}

$\bullet$ Proof of \eqref{perm for L3 h}: Follows in a similar way to the proof of \eqref{perm for L3 g}. 

Estimates \eqref{perm estimate L2}-\eqref{perm estimate L} for the loss operators follow. Estimates for the gain operators follow from \eqref{G leq L sharp} and the estimates for the loss operators. The proof is complete.
\end{proof}

Proposition \ref{permutation lemma} also implies an $L^1$-continuity result for the transported gain and loss operators
\begin{corollary}\label{L1 continuity} Let $0<T\leq\infty$ and $\alpha,\beta>0$. For $i\in\{1,2,3\}$, consider some sequences  $(f_{i,n})_{n}\subseteq\mathcal{F}_T^+$ and $f_i\in\mathcal{F}_T^+$ such that $f_{i,n}^\#(t)\overset{\mathcal{M}_{\alpha,\beta}}\longrightarrow f_i^\#(t)$ for all $t\in[0,T)$. Then, for all $t\in[0,T)$, the following convergence holds
\begin{equation}\label{convergence of L}
\left(L^\#(f_{1,n},f_{2,n},f_{3,n})(t),G^\#(f_{1,n},f_{2,n},f_{3,n})(t)\right)\overset{L^1_{x,v}} \longrightarrow \left(L^\#(f_1,f_2,f_3)(t),G^\#(f_1,f_2,f_3)(t)\right),\text{ as }n\to\infty.
\end{equation}
\end{corollary}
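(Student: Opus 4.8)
The plan is to reduce the claimed $L^1_{x,v}$-convergence to an application of a dominated-convergence argument, using the bilinear/trilinear structure of the operators together with the uniform $\mathcal{M}_{\alpha,\beta}$-bounds guaranteed by the hypothesis $f_{i,n}^\#(t)\overset{\mathcal{M}_{\alpha,\beta}}\longrightarrow f_i^\#(t)$. First I would fix $t\in[0,T)$ and set $m:=\sup_{i,n}\|f_{i,n}^\#(t)\|_{\mathcal{M}_{\alpha,\beta}}<\infty$, which is finite by the definition of $\mathcal{M}_{\alpha,\beta}$-convergence; note also $\|f_i^\#(t)\|_{\mathcal{M}_{\alpha,\beta}}\leq m$. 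By multilinearity, the difference $L^\#(f_{1,n},f_{2,n},f_{3,n})(t)-L^\#(f_1,f_2,f_3)(t)$ telescopes into a sum of three terms, each of which is $L^\#$ (or one of its bilinear/linear pieces) evaluated at arguments among which at least one is a difference $f_{i,n}^\#(t)-f_i^\#(t)$ and the rest are either $f_{j,n}^\#(t)$ or $f_j^\#(t)$, all bounded in $\mathcal{M}_{\alpha,\beta}$ by $m$; the same telescoping applies to $G^\#$. So it suffices to show that each such term tends to $0$ in $L^1_{x,v}$.

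For a single telescoped term, say $L_3^\#(f_{1,n}-f_1,f_{2,n},f_3)(t)$ and its binary and gain analogues, I would argue as follows. Pointwise a.e. in $(x,v)$, since $f_{i,n}^\#(t)\to f_i^\#(t)$ a.e. (hence $f_{i,n}(t)\to f_i(t)$ a.e., the change of variables being measure preserving), and since the integrands defining $L_2^\#,L_3^\#,G_2^\#,G_3^\#$ are continuous in the $f$-values and dominated — via the Maxwellian bounds $|f_{i,n}(t,x,w)|\leq m\,e^{-\alpha|x-tw|^2-\beta|w|^2}$ — by an integrable function of the collision variables (here I invoke Lemma \ref{convolution lemma} to see that, e.g., $\int B_3(\bm u,\bm\omega)\,m^3 e^{-\beta(|w_1|^2+|w_2|^2)}\,d\bm\omega\,dv_1\,dv_2\lesssim m^3 p_{\gamma_2,\gamma_3}(v)$ and similarly for the binary piece), the dominated convergence theorem gives that the telescoped operator term converges to $0$ for a.e.\ $(x,v)$. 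To upgrade this pointwise convergence to $L^1_{x,v}$-convergence I would apply the dominated convergence theorem a second time, now in $(x,v)$: by Proposition \ref{permutation lemma} (choosing the permutation so that the difference-argument $f_{i,n}-f_i$ lands in the $|||\cdot|||_\infty$ slot), each telescoped term is pointwise in $(x,v)$ bounded by $C_\beta\, m^{2}\,\big(f_{i,n}^\#(t)-f_i^\#(t)\big)$'s $\mathcal{M}_{\alpha,\beta}$-majorant times... — more precisely, each term is bounded in $L^1_{x,v}$-integrable fashion by $C_\beta m^2\cdot 2m\, p_{\gamma_2,\gamma_3}M_{\alpha,\beta}$ up to the measure-preserving $\#$, and $p_{\gamma_2,\gamma_3}M_{\alpha,\beta}\in L^1_{x,v}$ by \eqref{product in L1}. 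Hence dominated convergence in $(x,v)$ applies and the term $\to 0$ in $L^1_{x,v}$.

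The main obstacle — and the step deserving care — is the double use of dominated convergence: one has to verify that the pointwise (in $(x,v)$) limit of each telescoped operator term is indeed $0$, which requires the inner dominated-convergence argument in the collision variables, and then find a single $L^1_{x,v}$ majorant independent of $n$ for the outer application. Both dominations are exactly what Lemma \ref{convolution lemma}, Lemma \ref{bound on R tilde lemma}, and Proposition \ref{permutation lemma} were set up to provide: the key point is that the bound is linear in the $\mathcal{M}_{\alpha,\beta}$-norm of the "difference" argument, so pushing that norm to a fixed majorant while the difference tends to $0$ a.e. does the job. Summing the three telescoped contributions for $L^\#$ and the three for $G^\#$, and recalling $L^\#=L_2^\#+L_3^\#$, $G^\#=G_2^\#+G_3^\#$, yields \eqref{convergence of L} for the fixed $t$, and since $t\in[0,T)$ was arbitrary the proof is complete.
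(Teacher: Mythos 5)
Your telescoping of the difference via multilinearity, with all arguments uniformly dominated by $C M_{\alpha,\beta}^{-\#}$, is exactly the paper's first step; the way you close the argument, however, has a gap for the gain operators. For your outer dominated-convergence step you need a pointwise-in-$(x,v)$, $n$-independent, integrable majorant of each telescoped term. For the loss terms this exists, since $L^\#(f,g,h)=f^\#R^\#(g,h)$ and part \textit{(i)} of Lemma \ref{bound on R tilde lemma} bounds $R^\#$ pointwise by $C_\beta m^2 p_{\gamma_2,\gamma_3}$, giving the majorant $2C_\beta m^3\,p_{\gamma_2,\gamma_3}M_{\alpha,\beta}\in L^1_{x,v}$. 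But neither Lemma \ref{bound on R tilde lemma} nor Proposition \ref{permutation lemma} gives a \emph{pointwise} bound on $G_2^\#$ or $G_3^\#$: all their gain estimates are $L^1_{x,v}$-norm bounds obtained through the identity $\|G^\#\|_{L^1_{x,v}}=\|L^\#\|_{L^1_{x,v}}$ of Proposition \ref{monotonicity proposition sharp}\textit{(v)}. If you try to majorize the transported gain pointwise by discarding the spatial Gaussians you only get $\lesssim m^3 e^{-\beta|v|^2}p_{\gamma_2,\gamma_3}(v)$, which has no decay in $x$ and is not in $L^1_{x,v}$; the pointwise bound you assert is in fact true, but it requires the momentum--energy identities \eqref{identity binary} and \eqref{identity ter} (as in the proof of Proposition \ref{bounds on operators proposition}) to recover an $e^{-\alpha|x|^2}$ factor, and you never invoke them. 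A further, smaller point: the a.e.\ convergence of the gain integrands requires transporting null sets through the collision maps $(v,v_1)\mapsto(v',v_1')$, $(v,v_1,v_2)\mapsto(v^*,v_1^*,v_2^*)$, which deserves at least a word. Also, your parenthetical ``choosing the permutation so that the difference lands in the $|||\cdot|||_\infty$ slot'' points the wrong way: the hypothesis $f_{i,n}^\#(t)\overset{\mathcal{M}_{\alpha,\beta}}\longrightarrow f_i^\#(t)$ is only a.e.\ convergence plus uniform boundedness, so $|||f_{i,n}^\#-f_i^\#|||_\infty$ need not tend to zero, and bounded $L^1$ norms together with a.e.\ convergence do not by themselves give $L^1$ convergence.

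The paper's proof is both simpler and avoids all pointwise control of the gain: after the same telescoping, bound the non-difference arguments by $CM_{\alpha,\beta}^{-\#}$ using monotonicity, then apply Proposition \ref{permutation lemma} with the permutation chosen so that the \emph{difference} sits in the $\|\cdot\,p_{\gamma_2,\gamma_3}\|_{L^1_{x,v}}$ slot (and $M_{\alpha,\beta}$, with $\|M_{\alpha,\beta}\|_{\mathcal{M}_{\alpha,\beta}}=1$, occupies the $|||\cdot|||_\infty$ slots). Each telescoped term, for both loss and gain, is then bounded by $C_\beta\|(f_{i,n}^\#(t)-f_i^\#(t))p_{\gamma_2,\gamma_3}\|_{L^1_{x,v}}$, and a single application of the Dominated Convergence Theorem in $(x,v)$, with majorant $2CM_{\alpha,\beta}p_{\gamma_2,\gamma_3}\in L^1_{x,v}$ from \eqref{product in L1}, sends this to zero. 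If you want to keep your two-stage argument, you must either prove the pointwise Maxwellian bound on $G^\#$ via \eqref{identity binary}--\eqref{identity ter} or switch to the paper's norm-level estimate; as written, the outer domination for the gain terms is unjustified.
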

\begin{proof}  Fix $t\in [0,T)$. Since  $f^\#_{i,n}(t)\overset{\mathcal{M}_{\alpha,\beta}}\longrightarrow f^\#_i(t)$, for any $i\in\{1,2,3\}$, we have
\begin{equation}\label{convergence in sharp}
f_{i,n}^\#(t)\overset{a.e.}\longrightarrow f^\#_i(t),\quad \sup_{n\in\mathbb{N}}\{|f_{i,n}^\#(t)|,|f^\#_i(t)|\}\leq CM_{\alpha,\beta},
\end{equation}
for some constant $C>0$. Thus
\begin{equation}\label{convergence in non sharp}
f_{i,n}(t)\overset{a.e.}\longrightarrow f_i(t),\quad \sup_{n\in\mathbb{N}}\{|f_{i,n}(t)|,|f_i(t)|\}\leq CM_{\alpha,\beta}^{-\#}(t).
\end{equation}
Let us first prove \eqref{convergence of L} for the loss case. By \eqref{L} and triangle inequality, it suffices to prove
\begin{align}
\|L_2^\#(f_{1,n},f_{2,n})(t)-L_2^\#(f_1,f_2)(t)\|_{L^1_{x,v}}&\overset{n\to\infty} \longrightarrow 0,\label{L1 conv for L2}\\
\|L_3^\#(f_{1,n},f_{2,n},f_{3,n})(t)-L_3^\#(f_1,f_2,f_3)(t)\|_{L^1_{x,v}}&\overset{n\to\infty}\longrightarrow 0.\label{L1 conv for L3}
\end{align}
$\bullet$ Proof of \eqref{L1 conv for L2}:  Using bilinearity of $L_2^\#$, bound \eqref{convergence in non sharp} and monotonicity of $L_2^\#$, we have
\begin{align}
\|&L_2^\#(f_{1,n},f_{2,n})(t)-L_2^\#(f_1,f_2)(t)\|_{L^1_{x,v}}\leq\nonumber\\
&\leq \|L_2^\#(f_{1,n}-f_1,f_{2,n})(t)\|_{L^1_{x,v}}+\|L_2^\#(f_1,f_{2,n}-f_2)(t)\|_{L^1_{x,v}}\nonumber\\
&\leq C\|L_2^\#(f_{1,n}-f_1,M_{\alpha,\beta}^{-\#})(t)\|_{L^1_{x,v}}+C\|L_2^\#(M_{\alpha,\beta}^{-\#},f_{2,n}-f_2)(t)\|_{L^1_{x,v}}\nonumber\\
&\leq C_\beta(\|(f_{1,n}^\#(t)-f_1^\#(t))p_{\gamma_2,\gamma_3}\|_{L^1_{x,v}}+C_{\beta}\|(f_{2,n}^\#(t)-f_2^\#(t))p_{\gamma_2,\gamma_3}\|_{L^1_{x,v}}),\label{lemma perm for L2}
\end{align}
where  to obtain the last inequality we  use \eqref{perm estimate L2} from Proposition \ref{permutation lemma} and 
$\|M_{\alpha,\beta}\|_{\mathcal{M}_{\alpha,\beta}}=1.$

 By \eqref{convergence in sharp} and the Dominated Convergence Theorem, each of the terms in \eqref{lemma perm for L2} goes to zero as $n\to\infty$ and \eqref{L1 conv for L2} is proved.

$\bullet$ Proof of \eqref{L1 conv for L3}: Using trilinearity of $L_3^\#$, bound \eqref{convergence in non sharp} and monotonicity of $L_3^\#$, we have
\begin{align}
\|&L_3^\#(f_{1,n},f_{2,n},f_{3,n})(t)-L_3^\#(f_1,f_2,f_3)(t)\|_{L^1_{x,v}}\nonumber\\
&\leq \|L_3^\#(f_{1,n}-f_1,f_{2,n},f_{3,n})(t)\|_{L^1_{x,v}}+\|L_3^\#(f_1,f_{2,n}-f_2,f_{3,n})(t)\|_{L^1_{x,v}}+\|L_3^\#(f_1,f_2,f_{3,n}-f_3)\|_{L^1_{x,v}}\nonumber\\
&\leq C\|L_3^\#(f_{1,n}-f_1,M_{\alpha,\beta}^{-\#},M_{\alpha,\beta}^{-\#})(t)\|_{L^1_{x,v}}+C\|L_3^\#(M_{\alpha,\beta}^{-\#},f_{2,n}-f_2,M_{\alpha,\beta}^{-\#})(t)\|_{L^1_{x,v}}\nonumber\\
&\hspace{3cm}+C\|L_3^\#(M_{\alpha,\beta}^{-\#},M_{\alpha,\beta}^{-\#},f_{3,n}-f_3)\|_{L^1_{x,v}}\nonumber\\
&\leq C_\beta(\|(f_{1,n}^\#(t)-f_1^\#(t))p_{\gamma_2,\gamma_3}\|_{L^1_{x,v}}+C_\beta\|(f_{2,n}^\#(t)-f_2^\#(t))p_{\gamma_2,\gamma_3}\|_{L^1_{x,v}}\nonumber\\
&\hspace{3cm}+C_\beta\|(f_{3,n}^\#(t)-f_3^\#(t))p_{\gamma_2,\gamma_3}\|_{L^1_{x,v}}),\label{lemma perm for L3}
\end{align}
where to obtain the last inequality we use \eqref{perm estimate L3} from Proposition \ref{permutation lemma} and $\|M_{\alpha,\beta}\|_{\mathcal{M}_{\alpha,\beta}}=1.$ By \eqref{convergence in sharp} and the Dominated Convergence Theorem, each of the terms in \eqref{lemma perm for L3} goes to zero as $n\to\infty$ and \eqref{L1 conv for L3} is proved.  Combining \eqref{L1 conv for L2}-\eqref{L1 conv for L3}, we obtain \eqref{convergence of L}. 

The gain operator convergence follows with a similar argument.
\end{proof}

%

%

\subsection{A global estimate on the time average of the transported  gain and loss operators}\label{sub with transport} Here, we prove Proposition \ref{bounds on operators proposition}, which provides upper global bounds for the time average of the transported   operators. These estimates will be essential to prove that the necessary beginning condition \eqref{beginning condition}  for the convergence of the iteration  holds globally in time for small enough initial data (see Section \ref{sec: gwp}).  For the binary case and soft potentials, these bounds were established in \cite{alonso-gamba}. However, the presence of the ternary collisional operator requires new treatment which strongly relies on the properties of  ternary interactions.

Before stating Proposition \ref{bounds on operators proposition}, we provide the following auxiliary estimate for the time integral of a traveling Maxwellian which will be used in the proof of the result for $n=d$ in the binary case  and $n=2d$ in the ternary case.

 \begin{lemma}\label{time lemma} Let $n\in\mathbb{N}$, $x_0,u_0\in\mathbb{R}^n$, with $u_0\neq 0$ and $\alpha>0$. Then,  the following estimate holds
\begin{equation*}
\int_0^\infty e^{-\alpha|x_0-\tau u_0|^2}\,d\tau\leq \frac{\sqrt{\pi}}{2}\alpha^{-1/2} |u_0|^{-1}.
\end{equation*}
\end{lemma}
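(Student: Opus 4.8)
The plan is to reduce the $n$-dimensional integral to a one-dimensional one by splitting $x_0$ into its component parallel to $u_0$ and its component orthogonal to $u_0$. Writing $\hat{u}_0 = u_0/|u_0|$, decompose $x_0 = x_\parallel + x_\perp$ where $x_\parallel = (x_0\cdot\hat{u}_0)\hat{u}_0$ and $x_\perp\perp u_0$. Then for every $\tau$ the vector $x_0 - \tau u_0$ has orthogonal component exactly $x_\perp$ (independent of $\tau$) and parallel component $x_\parallel - \tau u_0$, so by the Pythagorean theorem
\begin{equation*}
|x_0 - \tau u_0|^2 = |x_\perp|^2 + |x_\parallel - \tau u_0|^2 \geq |x_\parallel - \tau u_0|^2 = \big(s_0 - \tau|u_0|\big)^2,
\end{equation*}
where $s_0 := x_0\cdot\hat{u}_0\in\mathbb{R}$. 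This drops the harmless orthogonal Gaussian factor $e^{-\alpha|x_\perp|^2}\leq 1$.

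Next I would substitute this bound into the integral and change variables. Since $e^{-\alpha|x_0-\tau u_0|^2}\leq e^{-\alpha(s_0-\tau|u_0|)^2}$, we get
\begin{equation*}
\int_0^\infty e^{-\alpha|x_0-\tau u_0|^2}\,d\tau \leq \int_0^\infty e^{-\alpha(s_0 - \tau|u_0|)^2}\,d\tau.
\end{equation*}
Now set $\sigma = s_0 - \tau|u_0|$, so $d\sigma = -|u_0|\,d\tau$, i.e. $d\tau = |u_0|^{-1}\,d\sigma$; as $\tau$ runs over $[0,\infty)$, $\sigma$ runs over $(-\infty, s_0]$. Hence
\begin{equation*}
\int_0^\infty e^{-\alpha(s_0-\tau|u_0|)^2}\,d\tau = |u_0|^{-1}\int_{-\infty}^{s_0} e^{-\alpha\sigma^2}\,d\sigma \leq |u_0|^{-1}\int_{-\infty}^{\infty} e^{-\alpha\sigma^2}\,d\sigma = |u_0|^{-1}\sqrt{\frac{\pi}{\alpha}}.
\end{equation*}
This already gives a bound of $\sqrt{\pi}\,\alpha^{-1/2}|u_0|^{-1}$, which is off by a factor of $2$ from the claimed bound; to recover the sharp constant $\frac{\sqrt{\pi}}{2}$ I would instead integrate $\sigma$ over the half-line only after noting that $e^{-\alpha\sigma^2}$ is even, so $\int_{-\infty}^{s_0}e^{-\alpha\sigma^2}\,d\sigma$ need not be bounded by the full Gaussian integral — rather, one should observe that the sharp statement must be using a slightly different normalization. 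On reflection, the factor $\tfrac12$ comes out if the relevant integral is genuinely over a half-line starting at $0$ (e.g. if $s_0\le 0$, then $\int_{-\infty}^{s_0}\le\int_{-\infty}^0=\tfrac12\sqrt{\pi/\alpha}$), so the cleanest route is to also bound $\int_{-\infty}^{s_0}e^{-\alpha\sigma^2}d\sigma\le\int_{-\infty}^{\infty}e^{-\alpha\sigma^2}d\sigma=\sqrt{\pi/\alpha}$ and accept that the paper's constant may rely on restricting to $\tau\ge 0$ meeting the vertex; I would present the clean half-line argument and let the constant be whatever the honest computation yields.

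The only genuine subtlety — hardly an obstacle — is making the orthogonal/parallel decomposition rigorous and observing that the orthogonal part of $x_0-\tau u_0$ is $\tau$-independent; everything after that is a one-variable Gaussian estimate with an affine change of variables. I would write this up in two short displays: the Pythagorean lower bound for $|x_0-\tau u_0|^2$, and the substitution reducing the time integral to a tail of the standard Gaussian integral $\int_{\mathbb{R}}e^{-\alpha\sigma^2}\,d\sigma = \sqrt{\pi/\alpha}$.
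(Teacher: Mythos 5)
Your argument is correct, and your hesitation over the factor $\tfrac{1}{2}$ is well-founded: the constant $\frac{\sqrt{\pi}}{2}$ in the statement cannot be reached by an honest computation because it is false as stated. Take $n=1$, $u_0=1$, $\alpha=1$ and $x_0=R>0$ large; then $\int_0^\infty e^{-(R-\tau)^2}\,d\tau=\int_{-R}^{\infty}e^{-s^2}\,ds\to\sqrt{\pi}$ as $R\to\infty$, so the sharp constant is $\sqrt{\pi}$ --- exactly what your computation delivers, since $\int_{-\infty}^{s_0}e^{-\alpha\sigma^2}\,d\sigma\le\sqrt{\pi/\alpha}$ with near-equality when $s_0=x_0\cdot\hat{u}_0$ is large and positive. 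Your observation that $\frac{\sqrt{\pi}}{2}$ does follow when $s_0\le 0$ identifies the precise extent to which the stated constant is true; there is no hypothesis in the lemma forcing that sign condition.

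Comparing routes: the paper bounds $|x_0-\tau u_0|\ge\big|\tau|u_0|-|x_0|\big|$ by the triangle inequality (your projection bound $|x_0-\tau u_0|\ge\big|s_0-\tau|u_0|\big|$ is at least as sharp, since $|s_0|\le|x_0|$), and then, after the substitution $y=\sqrt{\alpha}(\tau|u_0|-|x_0|)$, it takes the resulting integral to be $\int_0^\infty e^{-y^2}\,dy=\frac{\sqrt{\pi}}{2}$, whereas the correct lower limit is $-\sqrt{\alpha}|x_0|$ rather than $0$; this is exactly the slip you declined to make. The honest conclusion of either route is the bound $\sqrt{\pi}\,\alpha^{-1/2}|u_0|^{-1}$. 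The lost factor of $2$ is immaterial downstream: the lemma is invoked only in the proof of Proposition \ref{bounds on operators proposition}, where the numerical prefactor is absorbed into the generic dimensional constant $C_d$ defining $K_\beta$, so every subsequent estimate survives with $\frac{\sqrt{\pi}}{2}$ replaced by $\sqrt{\pi}$. Write up your projection argument with the constant $\sqrt{\pi}$ and you have a complete and correct proof.
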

\begin{proof}
By triangle inequality, we have
$$\big|\tau\left|u_0\right|-\left|x_0\right|\big|\leq |x_0-\tau u_0|\Rightarrow e^{-\alpha|x_0-\tau u_0|^2}\leq e^{-\alpha(\tau| u_0|-|x_0|)^2},\quad\forall \tau\geq 0.$$
Therefore integrating in $\tau$, we obtain
\begin{align*}
\int_0^\infty e^{-\alpha|x_0-\tau u_0|^2}\,d\tau&\leq \int_0^\infty e^{-\alpha(\tau| u_0|-|x_0|)^2}\,d\tau\leq \alpha^{-1/2}|u_0|^{-1}\int_0^\infty e^{-y^2}\,dy\leq \frac{\sqrt{\pi}}{2}\alpha^{-1/2}|u_0|^{-1},
\end{align*}
and the estimate is proved.
\end{proof} 

We now state and prove Proposition \ref{bounds on operators proposition}. Given $f\in L^\infty([0,T),\mathcal{M}_{\alpha,\beta})$, recall from \eqref{time maxwellian norm} the norm
$$|||f|||_{\infty}=\sup_{t\in[0,T)}\|f(t)\|_{\mathcal{M}_{\alpha,\beta}}.$$

\begin{proposition}\label{bounds on operators proposition} 
Let  $0<T\leq\infty$ and $\alpha,\beta>0$.  Then, for all  $f,g,h\in\mathcal{F}_T$ with $f^\#,g^\#,h^\#\in L^\infty([0,T),\mathcal{M}_{\alpha,\beta})$, the following bounds hold for any $t\in[0,T)$

$\bullet$ For the binary operators

\begin{align}
\int_0^t |L_2^\#(f,g)(\tau)|\,d\tau,\text{ }\int_0^t |G_2^\#(f,g)(\tau)|\,d\tau&\leq K_\beta\alpha^{-1/2} M_{\alpha,\beta}||f^\#|||_{\infty}|||g^\#|||_{\infty}.\label{bound on binary loss}
\end{align}

$\bullet$ For the ternary operators
\begin{align}
\int_0^t |L_3^\#(f,g,h)(\tau)|\,d\tau,\text{ }\int_0^t |G_3^\#(f,g,h)(\tau)|\,d\tau&\leq K_\beta\alpha^{-1/2} M_{\alpha,\beta}|||f^\#|||_{\infty}|||g^\#|||_{\infty}|||h^\#|||_{\infty}.\label{bound on ternary loss}
\end{align}

$\bullet$ For the mixed operators

\begin{align}
\int_0^t|L^\#(f,g,h)|(\tau)\,d\tau,\text{ }\int_0^t|G^\#(f,g,h)(\tau)|\,d\tau&\leq K_\beta\alpha^{-1/2} M_{\alpha,\beta}|||f^\#|||_{\infty}|||g^\#|||_{\infty}(1+|||h^\#|||_{\infty}),\label{bound on total loss}
\end{align}

where
\begin{equation}
\begin{aligned}
K_\beta&=C_d\bigg[\|b_2\|_{L^1(\mathbb{S}^{d-1})}(\beta^{-d/2}+\frac{1}{d+\gamma_2-1})+\|b_3\|_{L^1(\mathbb{S}^{2d-1})}(\beta^{-d}+\frac{1}{2d+\gamma_3-1})\bigg].\label{constant K text}
\end{aligned}
\end{equation}
\end{proposition}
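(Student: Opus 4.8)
The plan is to reduce everything to two ingredients: the pointwise estimates on the loss operators coming from Lemma~\ref{bound on R tilde lemma} (and its symmetrized counterpart, Proposition~\ref{permutation lemma}), which already capture the velocity-integration, and the time-integration bound for a traveling Maxwellian, Lemma~\ref{time lemma}, which handles the $\int_0^t\,d\tau$. Since $\|G^\#\|_{L^1}=\|L^\#\|_{L^1}$ by Proposition~\ref{monotonicity proposition sharp}(v), and since the gain operators are dominated by the loss structure through the same convolution bounds, it suffices to prove the estimates for $L_2^\#$ and $L_3^\#$; the mixed bound \eqref{bound on total loss} then follows by adding the binary and ternary estimates and using $|||h^\#|||_\infty\le 1+|||h^\#|||_\infty$, exactly as in the proof of Proposition~\ref{permutation lemma}. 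So the whole proof splits into the binary case \eqref{bound on binary loss} and the ternary case \eqref{bound on ternary loss}.

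For the binary case, I would start from the factorization $L_2^\#(f,g)(t)=f^\#(t)\,R_2^\#(g)(t)$ from \eqref{connecting L-R sharp}. Bounding $f^\#(t)\le |||f^\#|||_\infty M_{\alpha,\beta}$ and using the $B_2$ form \eqref{form of B_2} together with the pointwise bound $g(t,x+tv,v_1)\le |||g^\#|||_\infty e^{-\alpha|x+tv-tv_1|^2-\beta|v_1|^2}$, one gets
\begin{equation*}
|L_2^\#(f,g)(\tau,x,v)|\le \|b_2\|_{L^1(\mathbb{S}^{d-1})}|||f^\#|||_\infty|||g^\#|||_\infty\, e^{-\alpha|x|^2-\beta|v|^2}\int_{\mathbb{R}^d}|u|^{\gamma_2}e^{-\alpha|x-\tau u|^2-\beta|v_1|^2}\,dv_1,
\end{equation*}
where $u=v_1-v$ so that $x+\tau v-\tau v_1=x-\tau u$. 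The point is that the $x$-Gaussian of $g$, after the shift, becomes a traveling Maxwellian in the relative velocity $u$. I would then split the $v_1$-integral according to whether $|u|\le 1$ or $|u|>1$ (mirroring the soft-potential split in Lemma~\ref{convolution lemma}): on $|u|>1$ one bounds $|u|^{\gamma_2}\le 1$ if $\gamma_2\le 0$, or $|u|^{\gamma_2}\le |u|$ if $\gamma_2>0$, and integrates $e^{-\alpha|x-\tau u|^2}$ against $\tau$ first using Lemma~\ref{time lemma} with $n=d$, $x_0=x$, $u_0=u$, producing the gain $\alpha^{-1/2}|u|^{-1}$ which absorbs the stray $|u|$; on $|u|<1$ one drops the traveling Gaussian and integrates the singularity $\int_{|u|<1}|u|^{\gamma_2-1}\,du\sim (d+\gamma_2-1)^{-1}$, which converges precisely because $\gamma_2>-d+1$. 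Carrying out the $\tau$-integral after Fubini and collecting constants yields the factor $C_d\|b_2\|_{L^1(\mathbb{S}^{d-1})}(\beta^{-d/2}+\tfrac{1}{d+\gamma_2-1})\,\alpha^{-1/2}$ times $M_{\alpha,\beta}|||f^\#|||_\infty|||g^\#|||_\infty$, which is the binary part of $K_\beta\alpha^{-1/2}$.

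The ternary case \eqref{bound on ternary loss} is the main obstacle and runs in parallel but in $2d$ relative-velocity variables. Using $L_3^\#(f,g,h)(t)=f^\#(t)R_3^\#(g,h)(t)$ and bounding $f^\#\le |||f^\#|||_\infty M_{\alpha,\beta}$, I face
\begin{equation*}
\int_0^t\!\!\int_{\mathbb{R}^{2d}}|\bm{\widetilde u}|^{\gamma_3}\,e^{-\alpha(|x-\tau(v_1-v)|^2+|x-\tau(v_2-v)|^2)}e^{-\beta(|v_1|^2+|v_2|^2)}\,dv_1\,dv_2\,d\tau,
\end{equation*}
after shifting the $x$-arguments of $g$ and $h$. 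The difficulty is that $|\bm{\widetilde u}|$ from \eqref{ternary magnitude pre} is not simply $|\bm u|=\sqrt{|v_1-v|^2+|v_2-v|^2}$, and the $x$-Gaussians only see $\bm u$, not $\bm{\widetilde u}$. The key observation, already exploited in Proposition~\ref{permutation lemma}, is that $|\bm u|\le |\bm{\widetilde u}|$, so for $\gamma_3\le 0$ we have $|\bm{\widetilde u}|^{\gamma_3}\le|\bm u|^{\gamma_3}$ and everything can be phrased in the genuine $2d$-vector $\bm u=(v_1-v,v_2-v)$; for $\gamma_3>0$ one uses instead $|\bm{\widetilde u}|^{\gamma_3}\le 2(|v|^{\gamma_3}+|v_1|^{\gamma_3}+|v_2|^{\gamma_3})$ as in \eqref{conv tern pos}, which decouples the singularity entirely. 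In either case, changing variables from $(v_1,v_2)$ to $\bm u$ (the Jacobian is $1$ at fixed $v$), the two $x$-Gaussians combine into $e^{-\alpha(|x-\tau(v_1-v)|^2+|x-\tau(v_2-v)|^2)}$, which one estimates by the $2d$-analog of Lemma~\ref{time lemma}: treating $(x,x)\in\mathbb{R}^{2d}$ as $x_0$ and $\bm u\in\mathbb{R}^{2d}$ as $u_0$, one gets $\int_0^\infty e^{-\alpha(|x-\tau(v_1-v)|^2+|x-\tau(v_2-v)|^2)}\,d\tau\le \tfrac{\sqrt\pi}{2}\alpha^{-1/2}|\bm u|^{-1}$. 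Then the $\bm u$-integral splits as before: on $|\bm u|>1$ the factor $|\bm u|^{-1}$ times $|\bm u|^{\gamma_3}$ (with $\gamma_3\le 1$) integrates against the Gaussian in $v$ to give $\beta^{-d}$-type constants; on $|\bm u|<1$ one integrates $|\bm u|^{\gamma_3-1}$ over a $2d$-ball, giving $\int_0^1 r^{2d-1+\gamma_3-1}\,dr\sim (2d+\gamma_3-1)^{-1}$, finite exactly because $\gamma_3>-2d+1$. Collecting constants gives $C_d\|b_3\|_{L^1(\mathbb{S}^{2d-1})}(\beta^{-d}+\tfrac{1}{2d+\gamma_3-1})\alpha^{-1/2}M_{\alpha,\beta}|||f^\#|||_\infty|||g^\#|||_\infty|||h^\#|||_\infty$. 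Adding the binary and ternary pieces produces $K_\beta\alpha^{-1/2}$ as in \eqref{constant K text}, and the gain-operator bounds follow from \eqref{G leq L sharp} together with the fact that the gain integrand is controlled by the same $|u|^{\gamma_2}$, $|\bm{\widetilde u}|^{\gamma_3}$ weights after using the pre-post collisional invariances \eqref{pre-post bin cross}, \eqref{pre-post  ternary cross} — so the delicate point throughout is really the interplay between the traveling-Maxwellian decay in $\bm u$ and the soft-potential singularity, resolved by the $|\bm u|^{-1}$ gain from the $2d$ time integral.
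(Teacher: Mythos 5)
Your treatment of the loss terms is essentially the paper's argument (Lemma \ref{time lemma} to integrate the traveling Maxwellian in $\tau$, then a convolution estimate with the exponent lowered by one), but there are two genuine gaps. First, the gain operators. The bounds in Proposition \ref{bounds on operators proposition} are pointwise in $(x,v)$ --- the right-hand side is the function $K_\beta\alpha^{-1/2}M_{\alpha,\beta}(x,v)$ times norms --- so the identity \eqref{G leq L sharp}, which equates $L^1_{x,v}$ norms of gain and loss, cannot transfer the loss estimate to the gain: pointwise, $G^\#$ and $L^\#$ are entirely different objects, since the gain evaluates $f,g,h$ at post-collisional velocities. Your appeal to the pre-post invariance of the cross-sections only controls the kernel weight, not the spatial Gaussians. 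After rewriting the gain in terms of $f^\#,g^\#,h^\#$ one faces factors such as $e^{-\alpha(|x+\tau(v-v')|^2+|x+\tau(v-v_1')|^2)}e^{-\beta(|v'|^2+|v_1'|^2)}$, and the essential step in the paper is the conservation-law identities \eqref{identity binary} and \eqref{identity ter}, which show that this product equals $M_{\alpha,\beta}(x,v)$ times exactly the traveling-Maxwellian factor $e^{-\alpha|x+\tau(v-v_1)|^2-\beta|v_1|^2}$ (respectively its ternary analog) appearing in the loss computation. Without that identity the gain bounds do not follow; this is a missing idea, not a routine reduction.

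Second, your ternary hard-potential branch does not yield the stated estimate. For $\gamma_3\in(0,1]$ you propose $|\bm{\widetilde{u}}|^{\gamma_3}\le 2(|v|^{\gamma_3}+|v_1|^{\gamma_3}+|v_2|^{\gamma_3})$ as in \eqref{conv tern pos}; the term $|v|^{\gamma_3}$ survives the $dv_1\,dv_2$ integration, so you end up with a bound of the form $C\alpha^{-1/2}(1+|v|^{\gamma_3})M_{\alpha,\beta}$, strictly weaker than \eqref{bound on ternary loss}: the factor $|v|^{\gamma_3}$ cannot be absorbed into $M_{\alpha,\beta}$ with the same $\beta$, and the later arguments (the beginning condition $u_1^\#\le C_{out}M_{\alpha,\beta}$ and the contraction step of the iteration) need the unweighted Maxwellian bound. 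The fix, which is what the paper does, is the two-sided comparison \eqref{compared velocities}, $|\bm{u}|\le|\bm{\widetilde{u}}|\le\sqrt{3}\,|\bm{u}|$: the factor $|\bm{u}|^{-1}$ produced by the $2d$ time integral is bounded by $\sqrt{3}\,|\bm{\widetilde{u}}|^{-1}$, so the total weight becomes $\sqrt{3}\,|\bm{\widetilde{u}}|^{\gamma_3-1}$ with non-positive exponent, and Lemma \ref{convolution lemma}\textit{(ii)} with $q_3=\gamma_3-1$ gives a constant uniformly in $v$ for all $\gamma_3\in(-2d+1,1]$. A smaller slip of the same nature: in the binary region $|u|<1$ you say you ``drop the traveling Gaussian'' and then integrate $|u|^{\gamma_2-1}$; since that Gaussian carries the only $\tau$-dependence, dropping it first would make the $\tau$-integral diverge when $T=\infty$ --- you must apply Lemma \ref{time lemma} on that region too, obtaining $|u|^{\gamma_2-1}$, and only then discard the $\beta$-Gaussian.
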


\begin{proof} 
We prove each of the estimates separately.

{\em Proof of \eqref{bound on binary loss}}: As mentioned above, these bounds were established for the soft potential case in \cite{alonso-gamba}. Here we also treat the hard potential case.  Since $L_2^\#, G_2^\#$ are bilinear, we may assume without loss of generality that 
\begin{equation}\label{norm condition binary} |||f^\#|||_{\infty}=|||g^\#|||_{\infty}=1.
\end{equation}
Let us first  prove it for the loss term. For any $t\in[0,T)$ and a.e. $(x,v)\in\mathbb{R}^{2d}$, relation \eqref{norm condition binary}, followed by an application of Lemma \ref{time lemma}  for $n=d$, $x_0=x$, $u_0=u$,  the fact that $-d+1<\gamma_2\leq 1$, and an application of part \textit{(i)} of Lemma \ref{convolution lemma} for $q_2=\gamma_2-1$ imply
\begin{align}
\int_0^t &|L_2^\#(f,g)(\tau,x,v)|\,d\tau\leq \|b_2\|_{L^1(\mathbb{S}^{d-1})}\int_0^t\int_{\mathbb{R}^d}|u|^{\gamma_2}|f(\tau,x+\tau v,v)||g(\tau,x+\tau v,v_1)|\,dv_1\,d\tau\nonumber\\
&=\|b_2\|_{L^1(\mathbb{S}^{d-1})}\int_0^t\int_{\mathbb{R}^d}|u|^{\gamma_2}|f^\#(\tau,x,v)||g^\#(\tau,x+\tau (v-v_1),v_1)|\,d\omega\,dv_1\,d\tau\nonumber\\
&\leq \|b_2\|_{L^1(\mathbb{S}^{d-1})} M_{\alpha,\beta}(x,v)\int_0^t\int_{\mathbb{R}^d}|u|^{\gamma_2} e^{-\alpha|x+\tau(v-v_1)|^2-\beta|v_1|^2}\,dv_1\,d\tau\nonumber\\
&\leq \|b_2\|_{L^1(\mathbb{S}^{d-1})} M_{\alpha,\beta}(x,v)\int_{\mathbb{R}^d}|u|^{\gamma_2}e^{-\beta |v_1|^2}\int_0^\infty e^{-\alpha|x-\tau u|^2}\,d\tau\,dv_1\nonumber\\
&\leq \|b_2\|_{L^1(\mathbb{S}^{d-1})}\frac{\sqrt{\pi}}{2}\alpha^{-1/2} M_{\alpha,\beta}(x,v)\int_{\mathbb{R}^d}|u|^{\gamma_2-1}e^{-\beta |v_1|^2}\,dv_1\nonumber\\
&\leq \|b_2\|_{L^1(\mathbb{S}^{d-1})}\frac{\sqrt{\pi}}{2}\widetilde{K}_{\beta,\gamma_2-1}^2\alpha^{-1/2} M_{\alpha,\beta}(x,v)\nonumber\\
&\leq C_d\|b_2\|_{L^1(\mathbb{S}^{d-1})}\alpha^{-1/2} \left(\beta^{-d/2}+\frac{1}{d+\gamma_2-1}\right)M_{\alpha,\beta}(x,v)\label{final estimate on binary loss},
\end{align}
where $C_d$ is an appropriate constant depending on the dimension $d$. To obtain \eqref{final estimate on binary loss}, we used \eqref{constant wide K_2} and the fact that $q_2=\gamma_2-1\leq 0$. Estimate \eqref{bound on binary loss}  for the loss term follows.

To prove \eqref{bound on binary loss} for the gain term, 
we will use the identity
\begin{equation}\label{identity binary}
|x+\tau(v-v')|^2+|x+\tau(v-v_1')|^2=|x|^2+|x+\tau(v-v_1)|^2,
\end{equation}
which follows from the binary conservation of momentum and energy
\begin{align}
v'+v_1'&=v+v_1,\nonumber\\
|v'|^2+|v_1'|^2&=|v|^2+|v_1|^2.\label{identity conservation binary}
\end{align}
For any $t\in[0,T)$ and a.e. $(x,v)\in\mathbb{R}^{2d}$, \eqref{norm condition binary} and \eqref{identity binary}-\eqref{identity conservation binary}  imply
\begin{align}
\int_0^t &|G_2^\#(f,g)(\tau,x,v)|\,d\tau
\leq \int_0^t\int_{\mathbb{S}^{d-1}\times\mathbb{R}^d}|u|^{\gamma_2}b_2(\hat{u}\cdot \omega)|f(\tau,x+\tau v,v')||g(\tau,x+\tau v,v_1')|\,d\omega\,dv_1\,d\tau\nonumber\\
&=\int_0^t\int_{\mathbb{S}^{d-1}\times\mathbb{R}^d}|u|^{\gamma_2}b_2(\hat{u}\cdot \omega)|f^\#(\tau,x+\tau(v-v'),v')||g^\#(\tau,x+\tau (v-v_1'),v_1')|\,d\omega\,dv_1\,d\tau\nonumber\\
&\leq\int_0^t\int_{\mathbb{S}^{d-1}\times\mathbb{R}^d}|u|^{\gamma_2}b_2(\hat{u}\cdot \omega)e^{-\alpha(|x+\tau(v-v')|^2+|x+\tau(v-v_1')|^2)}e^{-\beta(|v'|^2+|v_1'|^2)}\,d\omega\,dv_1\,d\tau\nonumber\\
&=\|b_2\|_{L^1(\mathbb{S}^{d-1})} M_{\alpha,\beta}(x,v)\int_0^t\int_{\mathbb{R}^d}|u|^{\gamma_2} e^{-\alpha|x+\tau(v-v_1)|^2-\beta|v_1|^2}\,dv_1\,d\tau\label{use of identities bin}.
\end{align}
  Combining \eqref{use of identities bin} with an identical argument to the one used for the loss term, we obtain
\begin{equation}\label{final estimate on binary gain}
\int_0^t |G_2^\#(f,g)(\tau,x,v)|\,d\tau\leq C_d\|b_2\|_{L^1(\mathbb{S}^{d-1})}\alpha^{-1/2} \left(\beta^{-d/2}+\frac{1}{d+\gamma_2-1}\right)M_{\alpha,\beta}(x,v),
\end{equation}
and estimate \eqref{bound on binary loss} for the gain term follows.

{\em Proof of \eqref{bound on ternary loss}}: Since $L_3^\#,G_3^\#$ is trilinear, we may assume without loss of generality that
\begin{equation}\label{norm condition ternary} |||f^\#|||_{\infty}=|||g^\#|||_{\infty}=|||h^\#|||_{\infty}=1.
\end{equation}
Let us first prove  \eqref{bound on ternary loss} for the loss term. For any $t\in[0,T)$ and a.e. $(x,v)\in\mathbb{R}^{2d}$,  \eqref{norm condition ternary} implies
\begin{align}
\int_0^t &|L_3^\#(f,g,h)(\tau,x,v)|\,d\tau\leq\nonumber\\
&\leq \|b_3\|_{L^1(\mathbb{S}^{2d-1})}\int_0^t\int_{\mathbb{R}^{2d}}|\bm{\widetilde{u}}|^{\gamma_3}|f(\tau,x+\tau v,v)||g(\tau,x+\tau v,v_1)||h(\tau,x+\tau v,v_2)|\,dv_1\,dv_2\,d\tau\nonumber\\
&=\|b_3\|_{L^1(\mathbb{S}^{2d-1})}\int_0^t\int_{\mathbb{R}^{2d}}|\bm{\widetilde{u}}|^{\gamma_3}|f^\#(\tau,x,v)||g^\#(\tau,x+\tau (v-v_1),v_1)||h^\#(\tau,x+\tau (v-v_2),v_2)|\,dv_1\,dv_2\,d\tau\nonumber\\
&\leq \|b_3\|_{L^1(\mathbb{S}^{2d-1})}M_{\alpha,\beta}(x,v)\int_0^t\int_{\mathbb{R}^{2d}}|\bm{\widetilde{u}}|^{\gamma_3}e^{-\alpha(|x+\tau(v-v_1)|^2+|x+\tau(v-v_2)|^2})e^{-\beta(|v_1|^2+|v_2|^2)}\,dv_1\,dv_2\,d\tau\nonumber\\
&\leq \|b_3\|_{L^1(\mathbb{S}^{2d-1})}  M_{\alpha,\beta}(x,v)\int_{\mathbb{R}^{2d}}|\bm{\widetilde{u}}|^{\gamma_3}e^{-\beta(|v_1|^2+|v_2|^2)}\int_0^\infty e^{-\alpha|\bm{x}-\tau\bm{u}|^2}\,d\tau\,dv_1\,dv_2,\label{vector notation ter loss}
\end{align}
 where in \eqref{vector notation ter loss} we use the notation
$$\bm{x}:=\begin{pmatrix}
x\\x
\end{pmatrix}\in\mathbb{R}^{2d},\quad\bm{u}=\begin{pmatrix}
v_1-v\\v_2-v
\end{pmatrix}\in\mathbb{R}^{2d}.$$
Notice that by triangle inequality and Young's inequality, we have
\begin{align}
|\bm{\widetilde{u}}|^2&=|v-v_1|^2+|v-v_2|^2+|v_1-v_2|^2\nonumber\\
&\leq |v-v_1|^2+|v-v_2|^2+(|v-v_1|+|v-v_2|)^2\nonumber\\
&\leq 3(|v-v_1|^2+|v-v_2|^2)\nonumber\\
&=3|\bm{u}|^2.\label{compared velocities}
\end{align}
Therefore, an application of Lemma \ref{time lemma} for $n=2d$, $x_0=\bm{x}$, $u_0=\bm{u}$, followed by  \eqref{compared velocities}, the fact that $-2d+1<\gamma_3\leq 1$, and an application of part \textit{(ii)} of Lemma \ref{convolution lemma} for $q_3=\gamma_3-1$  yield
\begin{align}
\int_0^t & |L_3^\#(f,g,h)(\tau,x,v)|\,d\tau\leq  \|b_3\|_{L^1(\mathbb{S}^{2d-1})} M_{\alpha,\beta}(x,v)\int_{\mathbb{R}^{2d}}|\bm{\widetilde{u}}|^{\gamma_3}e^{-\beta(|v_1|^2+|v_2|^2)}\int_0^\infty e^{-\alpha|\bm{x}-\tau\bm{u}|^2}\,d\tau\,dv_1\,dv_2\nonumber\\
&\leq   \|b_3\|_{L^1(\mathbb{S}^{2d-1})}\frac{\sqrt{\pi}}{2}\alpha^{-1/2}M_{\alpha,\beta}(x,v)\int_{\mathbb{R}^{2d}}|\bm{\widetilde{u}}|^{\gamma_3}|\bm{u}|^{-1}e^{-\beta(|v_1|^2+|v_2|^2)}\,dv_1\,dv_2\nonumber\\
&\leq \|b_3\|_{L^1(\mathbb{S}^{2d-1})}\frac{\sqrt{3\pi}}{6}\alpha^{-1/2}M_{\alpha,\beta}(x,v)\int_{\mathbb{R}^{2d}}|\bm{\widetilde{u}|^{\gamma_3-1}}e^{-\beta(|v_1|^2+|v_2|^2)}\,dv_1\,dv_2\nonumber\\
&\leq \|b_3\|_{L^1(\mathbb{S}^{2d-1})}\frac{\sqrt{3\pi}}{6}\widetilde{K}_{\beta,\gamma_3-1}^3\alpha^{-1/2}M_{\alpha,\beta}(x,v)\nonumber\\
&\leq C_d  \|b_3\|_{L^1(\mathbb{S}^{2d-1})}\alpha^{-1/2} \left(\beta^{-d}+\frac{1}{2d+\gamma_3-1}\right)M_{\alpha,\beta}(x,v)\label{final estimate on ternary loss},
\end{align}
where $C_d$ is an appropriate constant depending on the dimension $d$. To obtain \eqref{final estimate on ternary loss}, we used \eqref{constant wide K_3} and the fact that $q_3=\gamma_3-1\leq 0$. Estimate \eqref{bound on ternary loss}  for the loss term follows.

To prove \eqref{bound on ternary loss} for the gain term,
we will use the identity
\begin{equation}\label{identity ter}
|x+\tau(v-v^*)|^2+|x+\tau(v-v_1^*)|^2+|x+\tau(v-v_2^*)|^2=|x|^2+|x+\tau(v-v_1)|^2+|x+\tau(v-v_2)|^2,
\end{equation}
following from the ternary conservation of momentum and energy
\begin{align}
v^*+v_1^*+v_2^*&=v+v_1+v_2,\nonumber\\
|v^*|^2+|v_1^*|^2+|v_2^*|^2&=|v|^2+|v_1|^2+|v_2|^2.\label{identity cons ter}
\end{align}
For any $t\in[0,T)$ and a.e. $(x,v)\in\mathbb{R}^{2d}$, by \eqref{norm condition ternary} and \eqref{identity ter}-\eqref{identity cons ter}, we obtain
\begin{align}
\int_0^t &|G_3^\#(f,g,h)(\tau)|\,d\tau\nonumber\\
&\leq \int_0^t\int_{\mathbb{S}^{2d-1}\times\mathbb{R}^{2d}}|\bm{\widetilde{u}}|^{\gamma_3}b_3(\bm{\bar{u}\cdot\omega},\omega_1\cdot\omega_2)| f(\tau,x+\tau v,v^*)||g(\tau,x+\tau v,v_1^*)||h(\tau,x+\tau v,v_2^*)|\nonumber\\
&\hspace{8cm}\times\,d\omega_1\,d\omega_2\,dv_1\,dv_2\,d\tau\nonumber\\
&=\int_0^t\int_{\mathbb{S}^{2d-1}\times\mathbb{R}^{2d}}|\bm{\widetilde{u}}|^{\gamma_3}b_3(\bm{\bar{u}\cdot\omega},\omega_1\cdot\omega_2)|f^\#(\tau,x+\tau (v-v^*),v^*)||g^\#(\tau,x+\tau (v-v_1^*),v_1^*)|\nonumber\\
&\hspace{6cm}\times|h^\#(\tau,x+\tau (v-v_2^*),v_2^*)|\,d\omega_1\,d\omega_2\,dv_1\,dv_2\,d\tau\nonumber\\
&\leq \int_0^t\int_{\mathbb{S}^{2d-1}\times\mathbb{R}^{2d}}|\bm{\widetilde{u}}|^{\gamma_3}b_3(\bm{\bar{u}\cdot\omega},\omega_1\cdot\omega_2)e^{-\alpha(|x+\tau(v-v^*)|^2+|x+\tau(v-v_1^*)|^2+|x+\tau(v-v_2^*)|^2)}e^{-\beta(|v^*|^2+|v_1^*|^2+|v_2^*|^2)}\nonumber\\
&\hspace{8cm}\times\,d\omega_1\,d\omega_2\,dv_1\,dv_2\,d\tau\nonumber\\
&=\|b_3\|_{L^1(\mathbb{S}^{2d-1})}M_{\alpha,\beta}(x,v)\int_0^t\int_{\mathbb{R}^{2d}}|\bm{\widetilde{u}}|^{\gamma_3}e^{-\alpha(|x+\tau(v-v_1)|^2+|x+\tau(v-v_2)|^2)}e^{-\beta(|v_1|^2+|v_2|^2)}\,dv_1\,dv_2\,d\tau\nonumber\\
&\leq \|b_3\|_{L^1(\mathbb{S}^{2d-1})}  M_{\alpha,\beta}(x,v)\int_{\mathbb{R}^{2d}}|\bm{\widetilde{u}}|^{\gamma_3}e^{-\beta(|v_1|^2+|v_2|^2)}\int_0^\infty e^{-\alpha|\bm{x}-\tau\bm{u}|^2}\,d\tau\,dv_1\,dv_2.\label{vector notation ter}
\end{align}
  Combining \eqref{vector notation ter}  with an  identical argument to the one used for the loss case, we obtain
\begin{equation}\label{final bound for ternary gain}
\int_0^t |G_3^\#(f,g,h)(\tau)|\,d\tau\leq C_d  \|b_3\|_{L^1(\mathbb{S}^{2d-1})}\alpha^{-1/2} \left(\beta^{-d}+\frac{1}{2d+\gamma_3-1}\right)M_{\alpha,\beta}(x,v),
\end{equation}
and estimate \eqref{bound on ternary loss} for the gain term follows.

{\em Proof of \eqref{bound on total loss}}:  It follows directly from \eqref{bound on binary loss}-\eqref{bound on ternary loss}.
\end{proof}

\section{The Kaniel-Shinbrot iteration scheme and the associated linear problem}\label{sec iteration}
In this section, we present the Kaniel-Shinbrot iteration scheme which will then be used as the heart of the construction of  a global solution in Section \ref{sec: gwp}. This scheme is motivated by the works of \cite{illner-shinbrot,kaniel-shinbrot}. However the presence of the ternary collisional operator, in addition to the binary collisional operator, required a modification of the original construction. 

In particular, we outline the construction of the  Kaniel-Shinbrot iteration that we will use in this paper.  Formally speaking, given an initial data $f_0$, we construct an increasing sequence $(l_n)_{n\in\mathbb{N}}$ and a decreasing sequence $(u_n)_{n\in\mathbb{N}}$, with $l_n\leq u_n$, 
through the iteration
\begin{equation}\label{diffeq for loss n}
\begin{aligned}
\frac{\,d l_n}{\,dt}+ v\cdot \nabla_x l_n &=G(l_{n-1},l_{n-1},l_{n-1}) - L(l_n,u_{n-1},u_{n-1}),\\
l_n(0)&=f_0,
\end{aligned}
\end{equation}
\begin{equation}\label{diffeq for gain n}
\begin{aligned}
\frac{\,d u_n}{\,dt} + v\cdot \nabla_x u_n &=G(u_{n-1},u_{n-1},u_{n-1}) -  L(u_n,l_{n-1},l_{n-1}),\\
u_{n}(0)&=f_0.
\end{aligned}
\end{equation}
We will see that that the sequences $l_n$, $u_n$ converge to the same limit, namely a function $f$, which will be the solution of the binary-ternary Boltzmann equation \eqref{generalized boltzmann equation}. 

To make things rigorous, we first study an associated linear problem, and then inductively  apply these results, together with the  estimates derived in Section \ref{sec: properties of gain and loss}, to  establish that the Kaniel-Shinbrot iteration converges to a solution of \eqref{generalized boltzmann equation}, provided that an appropriate beginning condition is satisfied. This solution will be unique in the class of functions uniformly bounded by a Maxwellian.

\subsection{The associated linear problem }\label{sec aux}
Here, we  prove well-posedness for a linear problem associated to the iteration scheme \eqref{diffeq for loss n}-\eqref{diffeq for gain n}.
More precisely, given some functions of time $g,h$,  we show well-posedness up to time $0<T\leq\infty$ of the  linear problem
\begin{equation}\label{linear problem no sharps}
\begin{cases}
\partial_tf+v\cdot\nabla_xf=h-L(f,g,g),\quad (t,x,v)\in (0,T)\times\mathbb{R}^d\times\mathbb{R}^d,\\
f(0)=f_0,\quad (x,v)\in\mathbb{R}^d\times\mathbb{R}^d.
\end{cases}
\end{equation}
\begin{definition}\label{def of linear solution} Let $0<T\leq\infty$, $\alpha,\beta>0$, $f_0\in L^{1,+}_{x,v}$,  $g^\#\in L^\infty([0,T),\mathcal{M}_{\alpha,\beta}^+)$ and $h^\#\in L^1_{loc}([0,T),L^{1,+}_{x,v})$. We say that a function  $f\in \mathcal{F}_T^+$ with
\begin{enumerate}[(i)]
\item $f^\#\in C^0([0,T),L^{1,+}_{x,v}),$
\item $L^\#(f,g,g)\in L^1_{loc}([0,T),L^{1,+}_{x,v}),$ 

\item $f^\#$ is weakly differentiable and satisfies 
\begin{equation}\label{linear problem}
\begin{cases}
\displaystyle\frac{\,df^\#}{\,dt}+L^\#(f,g,g)=h^\#,\\
f^\#(0)=f_0,
\end{cases}
\end{equation}
\end{enumerate}
is a mild solution of \eqref{linear problem no sharps} in $[0,T)$ with initial data $f_0\in L^{1,+}_{x,v}$.
\end{definition}
\begin{remark} The differential equation of \eqref{linear problem} is interpreted as an equality of distributions since all terms involved belong to $L^1_{loc}([0,T),L^{1,+}_{x,v})$.
\end{remark}
\begin{remark} Remarks \ref{remark on measure preserving}-\ref{isometry remark}  imply that a mild solution $f$ to \eqref{linear problem no sharps} belongs to $C^0([0,T),L^{1,+}_{x,v})$.
\end{remark}
For technical reasons, we first prove well-posedness of \eqref{linear problem no sharps} under the additional assumptions
\begin{equation}\label{additional assumption on h}
f_0\in\mathcal{M}_{\alpha,\beta}^+,\quad
0\leq  h^\#(t)\leq Ce^{-t^2}M_{\alpha,\beta},\quad\forall t\in[0,T),
\end{equation}
for some constant $C>0$. Clearly if   \eqref{additional assumption on h} holds, then $f_0\in L^{1,+}_{x,v}$ and $h^\#\in L^1_{loc}([0,T),L^{1,+}_{x,v})$, thus \eqref{additional assumption on h} is  a stronger assumption  than those appearing in Definition \ref{def of linear solution}. This additional assumption will be removed later using an approximation argument.
\begin{lemma}\label{linear lemma} Let  $0<T\leq\infty$ and $\alpha,\beta>0$. Consider $f_0,h$ satisfying \eqref{additional assumption on h} and $g^\#\in L^\infty([0,T),\mathcal{M}_{\alpha,\beta}^+)$. Then, there exists a  mild solution $f$ of \eqref{linear problem no sharps} with $f^\#\in L^\infty([0,T),\mathcal{M}_{\alpha,\beta}^+)$.
Moreover,  $\|f^\#(\cdot)\|_{L^1_{x,v}}$ is absolutely continuous and satisfies
\begin{equation}\label{equation on linear derivative lemma}
\|f^\#(t)\|_{L^1_{x,v}}+\int_0^t\|L^\#(f,g,g)(\tau)\|_{L^1_{x,v}}\,d\tau=\|f_0\|_{L^1_{x,v}}+\int_0^t\|h^\#(\tau)\|_{L^1_{x,v}}\,d\tau,\quad\forall t\in[0,T).
\end{equation}
\end{lemma}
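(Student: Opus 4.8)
The idea is to solve the transported problem \eqref{linear problem} exactly via an integrating-factor (Duhamel) formula. Using the factorization $L^\#(f,g,g)(t) = f^\#(t)\,R^\#(g,g)(t)$ from \eqref{connecting L-R sharp}, equation \eqref{linear problem} reads, for a.e.\ fixed $(x,v)$, as the scalar linear ODE $\partial_t f^\# = h^\# - f^\#\,R^\#(g,g)$ with datum $f^\#(0,x,v) = f_0(x,v)$. Since $g^\# \in L^\infty([0,T),\mathcal{M}_{\alpha,\beta}^+)$, the pointwise bound \eqref{bound R} of Lemma \ref{bound on R tilde lemma} gives $0 \le R^\#(g,g)(\tau,x,v) \le C_\beta |||g^\#|||_\infty(1+|||g^\#|||_\infty)\,p_{\gamma_2,\gamma_3}(v)$, so
\[
\mathcal{R}(t,x,v) := \int_0^t R^\#(g,g)(\tau,x,v)\,d\tau
\]
is finite and nonnegative for every $t\in[0,T)$ and a.e.\ $(x,v)$. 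I would then \emph{define} $f$ by
\[
f^\#(t,x,v) := f_0(x,v)\,e^{-\mathcal{R}(t,x,v)} + \int_0^t e^{-(\mathcal{R}(t,x,v)-\mathcal{R}(s,x,v))}\,h^\#(s,x,v)\,ds,
\]
joint measurability of $R^\#(g,g)$ and of $f^\#$ in $(t,x,v)$ following from Tonelli's theorem, and nonnegativity being immediate since $f_0, h^\#, R^\#(g,g)\ge 0$ (monotonicity, Proposition \ref{monotonicity proposition sharp}).

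The Maxwellian bound is then easy: since $\mathcal{R}\ge 0$ we have $e^{-\mathcal{R}}\le 1$ and $e^{-(\mathcal{R}(t)-\mathcal{R}(s))}\le 1$ for $s\le t$, so assumption \eqref{additional assumption on h} yields
\[
0 \le f^\#(t,x,v) \le f_0(x,v) + \int_0^\infty C e^{-s^2}\,ds\; M_{\alpha,\beta}(x,v) \le \Big(\|f_0\|_{\mathcal{M}_{\alpha,\beta}} + \tfrac{\sqrt\pi}{2}\,C\Big) M_{\alpha,\beta}(x,v),
\]
hence $f^\#\in L^\infty([0,T),\mathcal{M}_{\alpha,\beta}^+)$. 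Feeding this into \eqref{bound L} (with $h$ replaced by $g$) and \eqref{L in L infty} of Lemma \ref{bound on R tilde lemma}, together with \eqref{product in L1}, gives $L^\#(f,g,g)\in L^\infty([0,T),L^{1,+}_{x,v})\subseteq L^1_{loc}([0,T),L^{1,+}_{x,v})$, which covers condition (ii) of Definition \ref{def of linear solution}; and $h^\#\in L^1_{loc}([0,T),L^{1,+}_{x,v})$ directly from \eqref{additional assumption on h}.

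It remains to verify that $f^\#$ solves \eqref{linear problem} and to establish the $L^1$ identity. Differentiating the Duhamel formula in $t$ (for a.e.\ $t$, pointwise in $(x,v)$, using absolute continuity of $t\mapsto \mathcal{R}(t,x,v)$) gives $\partial_t f^\# = h^\# - f^\# R^\#(g,g) = h^\# - L^\#(f,g,g)$; equivalently, Fubini turns the Duhamel formula into the integrated identity
\[
f^\#(t) = f_0 + \int_0^t\big(h^\#(s) - L^\#(f,g,g)(s)\big)\,ds \qquad\text{in } L^1_{x,v},
\]
whose integrand lies in $L^1_{loc}([0,T),L^1_{x,v})$. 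This exhibits $t\mapsto f^\#(t)$ as absolutely continuous, hence continuous, into $L^1_{x,v}$, so with nonnegativity $f^\#\in C^0([0,T),L^{1,+}_{x,v})$, i.e.\ condition (i); and it shows $f^\#$ is weakly differentiable with distributional derivative $h^\# - L^\#(f,g,g)$, so \eqref{linear problem} holds and $f$ is a mild solution. Finally, integrating the last identity over $(x,v)$ and using that $f^\#(t)$, $f_0$, $h^\#$, $L^\#(f,g,g)$ are all nonnegative (so Tonelli applies and the $L^1$-norms are the integrals) gives, after rearranging, exactly \eqref{equation on linear derivative lemma}, while absolute continuity of $\|f^\#(\cdot)\|_{L^1_{x,v}}$ follows from that of $t\mapsto f^\#(t)\in L^1_{x,v}$. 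The only mildly delicate points are the measurability/Fubini bookkeeping needed to pass between the pointwise-in-$(x,v)$ ODE and the $L^1_{x,v}$-valued statements, and checking that $\mathcal{R}$ is a.e.\ finite so the integrating factor is well defined — both handled by \eqref{bound R} — so I expect no serious obstacle; uniqueness is not asserted here and would in any case be immediate from the integrating factor.
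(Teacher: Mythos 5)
Your proposal is correct and follows essentially the same route as the paper: the explicit integrating-factor (Duhamel) formula for $f^\#$, the pointwise bound $e^{-\mathcal{R}}\le 1$ together with \eqref{additional assumption on h} to get $f^\#\in L^\infty([0,T),\mathcal{M}_{\alpha,\beta}^+)$, Lemma \ref{bound on R tilde lemma} for the integrability of $L^\#(f,g,g)$, and integration plus Fubini/Tonelli for \eqref{equation on linear derivative lemma}. The only cosmetic difference is that the paper proves $L^1$-continuity by a direct Lipschitz estimate on $|f^\#(t)-f^\#(s)|$ (via $1-e^{-x}\le x$), whereas you read continuity and absolute continuity off the integrated identity, which is an equally valid shortcut.
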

\begin{proof} Since $g^\#\in L^\infty([0,T),\mathcal{M}_{\alpha,\beta}^+)$,  part {\em (i)} of Lemma \ref{bound on R tilde lemma} implies
\begin{equation}\label{guarantee condition R}
0\leq R^\#(g,g)(t)\leq C_\beta |||g^\#|||_{\infty}(1+|||g^\#|||_\infty)p_{\gamma_2,\gamma_3},\quad\forall t\in [0,T),
\end{equation}
for some constant $C_\beta>0$ depending on $\beta$.
We define $f$ by
\begin{equation}\label{f  linear}
f^\#(t):=
f_0 \exp\left(-\displaystyle\int_0^t R^\#(g,g)(\sigma)\,d\sigma\right)+\displaystyle\int_0^th^\#(\tau)\exp\left(-\displaystyle\int_\tau^tR^\#(g,g)(\sigma)\,d\sigma\right)\,d\tau,\quad t\in[0,T).
\end{equation}
By \eqref{additional assumption on h}, \eqref{guarantee condition R}, and the fact  $f_0\in\mathcal{M}_{\alpha,\beta}^+$, $f^\#$ is well-defined and satisfies the bound
\begin{equation}\label{elemntary bound on f}
0\leq f^\#(t)\leq f_0+\int_0^th^\#(\tau)\,d\tau\leq \left(\|f_0\|_{\mathcal{M}_{\alpha,\beta}}+C\frac{\sqrt{\pi}}{2}\right)M_{\alpha,\beta},\quad\forall t\in[0,T),
\end{equation}
thus $f\geq 0$ and
\begin{equation}\label{f tilde in L inf}
f^\#\in L^\infty([0,T),\mathcal{M}_{\alpha,\beta}^+).
\end{equation}
Let  us now show that $f^\#\in C^0([0,T),L^{1,+}_{x,v})$. For any $t,s\in[0,T)$, expression \eqref{f  linear} yields
\begin{align*}
|f^\#(t)-f^\#(s)|&=\bigg|\left[f_0\exp\left(-\int_0^sR^\#(g,g)(\sigma)\,d\sigma\right)+\int_0^s h^\#(\tau)\exp\left(-\int_\tau^sR^\#(g,g)(\sigma)\,d\sigma\right)\,d\tau\right]\\
&\hspace{1cm}\times\left[\exp\left(-\int_s^tR^\#(g,g)(\sigma)\,d\sigma\right)-1\right]+\int_s^t h^\#(\tau)\exp\left(-\int_\tau^tR^\#(g,g)(\sigma)\,d\sigma\right)\,d\tau\bigg|,
\end{align*}
therefore by \eqref{additional assumption on h}, \eqref{guarantee condition R}, we may find a positive constants $C_{f_0,g,h}>0$  such that
\begin{equation}
|f^\#(t)-f^\#(s)|\leq C_{f_0,g,h} M_{\alpha,\beta}(1-e^{-C_{f_0,g,h}|t-s|p_{\gamma_2,\gamma_3}})+C_{f_0,g,h}|t-s|M_{\alpha,\beta},\quad\forall t\in[0,T).
\end{equation}
Using the elementary inequality
$1-e^{-x}\leq x,$ for all $x\geq 0$,
we obtain
\begin{equation}\label{before integration}
|f^\#(t)-f^\#(s)|\leq 2C_{f_0,g,h}|t-s|p_{\gamma_2,\gamma_3}M_{\alpha,\beta},\quad\forall t\in[0,T).
\end{equation}
Integrating \eqref{before integration},  we obtain
\begin{equation}\label{lipshitz}
 \|f^\#(t)-f^\#(s)\|_{L^1_{x,v}}\leq 2C_{f_0,g,h}|t-s|,\quad\forall t,s\in [0,T),
\end{equation}
since $p_{\gamma_2,\gamma_3}M_{\alpha,\beta}\in L^{1,+}_{x,v}$. We conclude that $f^\#\in C^0([0,T), L^{1,+}_{x,v})$, therefore $f\in C^0([0,T), L^{1,+}_{x,v})$. In particular, bound \eqref{lipshitz} implies that  $f$ is actually Lipschitz continuous.

Since  $f^\#,g^\#\in L^\infty([0,T),\mathcal{M}_{\alpha,\beta}^+)$,  part \textit{(ii)} of Lemma  \ref{bound on R tilde lemma}  implies
\begin{equation}\label{L in L_inf}
L^\#(f,g,g)\in L^\infty([0,T),L^{1,+}_{x,v})\subseteq L^1_{loc}([0,T),L^{1,+}_{x,v}) .
\end{equation}

Finally, by \eqref{additional assumption on h}, \eqref{L in L_inf}, representation \eqref{f  linear} and the Dominated Convergence Theorem, we conclude that $f^\#$ is weakly differentiable and satisfies
\begin{equation}\label{diffeq linear}
\begin{cases}
\displaystyle\frac{\,df^\#}{\,dt}+L^\#(f,g,g)=h^\#,\\
f^\#(0)=f_0,
\end{cases}
\end{equation}
 thus it is a mild solution of \eqref{linear problem no sharps}.

Integrating \eqref{diffeq linear}, the Fundamental Theorem of Calculus and the fact that $f^\#\in C^0([0,T),L^{1,+}_{x,v})$, $L^\#(f,g,g)$ and $h^\#\in L^1_{loc}([0,T),L^{1,+}_{x,v})$, imply
\begin{equation}\label{integral equation linear pre}
f^\#(t)+\int_0^tL^\#(f,g,g)(\tau)\,d\tau=f_0+\int_0^th^\#(\tau)\,d\tau,\quad\forall t\in[0,T).
\end{equation}
Using non-negativity of all terms involved in \eqref{integral equation linear pre} and Fubini's Theorem, we obtain \eqref{equation on linear derivative lemma} and absolute continuity of $\|f(t)\|_{L^1_{x,v}}$ follows. The proof is complete.
\end{proof}

Since the gain operator does not satisfy \eqref{additional assumption on h}, it will be convenient to relax  assumption \eqref{additional assumption on h} to $f_0\in L^{1,+}_{x,v}$, $h^\#\in L^1_{loc}([0,T),L^{1,+}_{x,v})$. As in \cite{kaniel-shinbrot}, the idea is to approximate  $f_0, h^\#$ in the $L^1_{x,v}$-norm with a monotone sequence of solutions occurring from a repeated application of Lemma \ref{linear lemma}.
We obtain the following well-posedness result
\begin{proposition}\label{linear prop}
 Let  $0<T\leq\infty$ and $\alpha,\beta>0$. Consider $f_0\in L^{1,+}_{x,v}$, $g^\#\in L^\infty([0,T),\mathcal{M}_{\alpha,\beta}^+)$ and $h^\#\in L^1_{loc}([0,T),L^{1,+}_{x,v})$. Then, there exists a unique  mild solution $f$ of \eqref{linear problem no sharps}. In particular  $f^\#$ is given by
 \begin{equation}\label{explicit formula for f}
 f^\#(t):=
f_0 \exp\left(-\displaystyle\int_0^t R^\#(g,g)(\sigma)\,d\sigma\right)+\displaystyle\int_0^th^\#(\tau)\exp\left(-\displaystyle\int_\tau^tR^\#(g,g)(\sigma)\,d\sigma\right)\,d\tau,\quad t\in[0,T).
 \end{equation}
\end{proposition}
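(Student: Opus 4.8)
The plan is to upgrade Lemma~\ref{linear lemma} from the restrictive hypothesis \eqref{additional assumption on h} to the natural hypothesis $f_0\in L^{1,+}_{x,v}$, $h^\#\in L^1_{loc}([0,T),L^{1,+}_{x,v})$, and then to establish uniqueness. For existence, I would first produce an increasing sequence of ``nice'' data that approximates $(f_0,h^\#)$: choose $f_0^{(k)}\in\mathcal{M}_{\alpha,\beta}^+$ with $f_0^{(k)}\uparrow f_0$ a.e.\ and $\|f_0^{(k)}-f_0\|_{L^1_{x,v}}\to 0$ (e.g.\ $f_0^{(k)}:=\min\{f_0,\,kM_{\alpha,\beta}\}\mathds{1}_{|x|^2+|v|^2\le k}$, which is bounded by $kM_{\alpha,\beta}$ hence lies in $\mathcal{M}_{\alpha,\beta}^+$, and converges monotonically to $f_0$ in $L^1$ by dominated convergence), and similarly truncate $h^\#$ so that $0\le h^{\#,(k)}(t)\le C_k e^{-t^2}M_{\alpha,\beta}$ on $[0,T)$, with $h^{\#,(k)}\uparrow h^\#$ a.e.\ and $h^{\#,(k)}(t)\to h^\#(t)$ in $L^1_{x,v}$ for a.e.\ $t$. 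Each pair $(f_0^{(k)},h^{\#,(k)})$ satisfies \eqref{additional assumption on h}, so Lemma~\ref{linear lemma} yields mild solutions $f^{(k)}$ with $f^{\#,(k)}\in L^\infty([0,T),\mathcal{M}_{\alpha,\beta}^+)$ given by the explicit formula \eqref{f  linear} with data $f_0^{(k)},h^{\#,(k)}$; and since $R^\#(g,g)\ge 0$ is the same operator throughout, the formula shows $f^{\#,(k)}$ is monotone increasing in $k$.

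Next I would pass to the limit. Define $f^\#(t)$ by the right-hand side of \eqref{explicit formula for f}; by the monotone convergence theorem (using $0\le R^\#(g,g)\le C_\beta|||g^\#|||_\infty(1+|||g^\#|||_\infty)p_{\gamma_2,\gamma_3}$ from part~(i) of Lemma~\ref{bound on R tilde lemma}, which makes every exponential factor in $[0,1]$) we get $f^{\#,(k)}(t)\uparrow f^\#(t)$ pointwise a.e., and also in $L^1_{x,v}$ for each $t$ because $f^{\#,(k)}(t)$ is dominated by the integrable function $f_0+\int_0^t h^\#(\tau)\,d\tau\le f^\#$ which itself lies in $L^1_{x,v}$ (here I use $\|f_0\|_{L^1_{x,v}}<\infty$ and $h^\#\in L^1_{loc}$, integrating the conservation identity \eqref{equation on linear derivative lemma} of Lemma~\ref{linear lemma} — more simply, $\|f^{\#,(k)}(t)\|_{L^1_{x,v}}\le\|f_0^{(k)}\|_{L^1_{x,v}}+\int_0^t\|h^{\#,(k)}(\tau)\|_{L^1_{x,v}}\,d\tau$, uniformly bounded on compact time intervals). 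Since $g^\#\in L^\infty([0,T),\mathcal{M}_{\alpha,\beta}^+)$, Corollary~\ref{L1 continuity} (with $f_{1,k}:=f^{(k)}$ and $f_{2,k}=f_{3,k}:=g$, noting $f^{\#,(k)}(t)\overset{\mathcal{M}_{\alpha,\beta}}\longrightarrow f^\#(t)$ once we verify $\sup_k\|f^{\#,(k)}(t)\|_{\mathcal{M}_{\alpha,\beta}}<\infty$, which however fails in general) forces me to argue $L^1$-continuity directly: instead I pass to the limit in the integral equation \eqref{integral equation linear pre} for $f^{(k)}$, using monotone convergence on each term, to get $f^\#(t)+\int_0^t L^\#(f,g,g)(\tau)\,d\tau=f_0+\int_0^t h^\#(\tau)\,d\tau$; here $L^\#(f,g,g)^\#=f^\#R^\#(g,g)$ by \eqref{connecting L-R sharp}, and the right-hand side is continuous in $t$ into $L^1_{x,v}$ since $h^\#\in L^1_{loc}$, so $f^\#\in C^0([0,T),L^{1,+}_{x,v})$, and $L^\#(f,g,g)\in L^1_{loc}([0,T),L^{1,+}_{x,v})$ follows from the same identity (its time-integral is finite). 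Differentiating the integral equation gives the weak equation \eqref{linear problem}, so $f$ is a mild solution in the sense of Definition~\ref{def of linear solution}.

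For uniqueness, suppose $f_1,f_2$ are two mild solutions; then $w:=f_1-f_2$ satisfies $\frac{dw^\#}{dt}=-w^\#R^\#(g,g)$ with $w^\#(0)=0$, interpreted distributionally, with $w^\#\in C^0([0,T),L^1_{x,v})$ and $R^\#(g,g)\in L^\infty_{loc}$ in $t$ pointwise (bounded by $C_\beta|||g^\#|||_\infty(1+|||g^\#|||_\infty)p_{\gamma_2,\gamma_3}$). The standard integrating-factor / Duhamel argument — multiply by $\exp(\int_0^t R^\#(g,g)\,d\sigma)$, which is well-defined and finite a.e.\ in $(x,v)$ — shows $w^\#(t)=0$ for all $t$; equivalently, one checks by direct substitution that the explicit formula \eqref{explicit formula for f} does solve \eqref{linear problem} and that any solution must equal it, since the ODE $\frac{d\phi}{dt}+\phi R = h$ with $\phi(0)=\phi_0$ has, for each fixed $(x,v)$ with $R(\cdot)\in L^1_{loc}(t)$, the unique solution given by that Duhamel formula. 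The main obstacle is the bookkeeping around regularity: making sure the limiting $f^\#$ genuinely lands in $C^0([0,T),L^{1,+}_{x,v})$ and that $L^\#(f,g,g)$ is locally $L^1$ in time even though $f^\#$ need \emph{not} be bounded by any Maxwellian (so Lemma~\ref{bound on R tilde lemma}(ii) no longer applies and one must extract the $L^1$-control purely from the conservation identity \eqref{equation on linear derivative lemma} passed to the limit). Everything else is a routine monotone-limit and integrating-factor computation.
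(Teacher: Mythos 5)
Your existence argument is essentially the paper's: truncate the data to $(f_{0,n},h_n^\#)$ satisfying \eqref{additional assumption on h} (the paper uses $\min\{f_0,nM_{\alpha,\beta}\}$ and the analogous time-weighted cut for $h^\#$, without your extra spatial indicator, which is harmless), apply Lemma \ref{linear lemma}, use the explicit formula to see the approximants are monotone, and pass to the limit in the integral identity by monotone/dominated convergence, extracting $L^\#(f,g,g)\in L^1_{loc}([0,T),L^{1,+}_{x,v})$ and $C^0$-in-time from the limiting identity itself rather than from Corollary \ref{L1 continuity} (your self-correction here is exactly right, and it is also why the paper never invokes that corollary in this proof). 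The only slip is the reversed inequality ``$f_0+\int_0^t h^\#\le f^\#$''; the correct domination is $f^{\#,(k)}(t)\le f_0+\int_0^t h^\#(\tau)\,d\tau$, which is what you actually use.

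Where you genuinely diverge is uniqueness. The paper writes the difference in integral form, bounds $R^\#(g,g)$ by $C_\beta|||g^\#|||_\infty(1+|||g^\#|||_\infty)p_{\gamma_2,\gamma_3}$, and runs Gronwall on $\|f^\#(t)\|_{L^1_{x,v}(K)}$ for compact $K$ (where $p_{\gamma_2,\gamma_3}$ is bounded), then exhausts $\mathbb{R}^d\times\mathbb{R}^d$ by compacts; this stays entirely at the level of $L^1$-norms and uses only $f^\#\in C^0([0,T),L^1_{x,v})$. Your route instead freezes $(x,v)$ and treats $\frac{d}{dt}w^\#+w^\#R^\#(g,g)=0$ as a scalar linear ODE with locally integrable coefficient, killed by the integrating factor $\exp(\int_0^t R^\#(g,g)\,d\sigma)$. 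This is also correct and arguably more elementary, but to make it rigorous you must reduce the $L^1_{x,v}$-valued integral identity (which holds for each $t$ up to a $t$-dependent null set in $(x,v)$) to a genuine scalar integral equation valid for all $t$ and a.e.\ fixed $(x,v)$: a Fubini argument plus a countable dense set of times, together with the uniform-in-$t$ pointwise bound on $R^\#(g,g)$ from Lemma \ref{bound on R tilde lemma}(i), does this, and then the integrating-factor (or scalar Gronwall) step finishes. The paper's compact-exhaustion Gronwall buys you freedom from this null-set bookkeeping; your pointwise argument buys a shorter, more transparent ODE-style conclusion. Either way the result, including the representation \eqref{explicit formula for f} (which in your construction is the definition of the limit and in the paper is obtained by passing to the limit in \eqref{explicit formula for f_n}), is established.
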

\begin{proof} \textit{Existence}: Given $n\in\mathbb{N}$, let us define 
\begin{equation}\label{chopped function phi}
 f_{0,n}:=
\begin{cases}
f_0,\text{ if } f_0\leq nM_{\alpha,\beta},\\
nM_{\alpha,\beta},\text{ if }f_0>nM_{\alpha,\beta},
\end{cases}
\end{equation}
and
\begin{equation}\label{chopped function}
 h_n^\#(t):=
\begin{cases}
h^\#(t),\text{ if }h^\#(t)\leq ne^{-t^2}M_{\alpha,\beta},\\
ne^{-t^2}M_{\alpha,\beta},\text{ if }h^\#(t)>ne^{-t^2}M_{\alpha,\beta}. 
\end{cases}
\end{equation}
It is clear that $f_{0,n}, h_n$ satisfy condition \eqref{additional assumption on h} for all $n\in\mathbb{N}$ and that
\begin{align}
0&\leq f_{0,n}\nearrow f_0\quad\text{as }n\to\infty,\label{monotone approximation cut phi}\\
\forall t\in[0,T):\quad 0&\leq h_n^\#(t)\nearrow h^\#(t)\quad\text{as }n\to\infty.\label{monotone approximation cut}
\end{align}
  Then the Monotone Convergence Theorem yields that 
  \begin{align}  
 &\|f_{0,n}\|_{L^1_{x,v}}\nearrow \|f_0\|_{L^1_{x,v}},\quad\text{as }n\to\infty,\label{phi_n conv of norms}\\
\forall t\in [0,T):\quad & \|h_n^\#(t)\|_{L^1_{x,v}}\nearrow \|h^\#(t)\|_{L^1_{x,v}},\quad\text{as }n\to\infty. \label{h_n conv of norms}
  \end{align}
  Moreover, since $f_0\in L^1_{x,v}$ and $h^\#\in L^1_{loc}([0,T),L^{1,+}_{x,v})$, relations \eqref{monotone approximation cut phi}-\eqref{monotone approximation cut} and the Dominated Convergence Theorem yield
  \begin{align} 
  f_{0,n}\overset{L^1_{x,v}}\longrightarrow &\text{ } f_0,\quad\text{as }n\to\infty,\label{phi_n to phi}\\
  \text{for a.e. }t\in[0,T):\quad h_n^\#(t)\overset{L^1_{x,v}}\longrightarrow &\text{ } h^\#(t),\quad\text{as }n\to\infty,\label{h_n to h}\\
  \forall t\in[0,T):\quad \int_0^t h_n^\#(\tau)\,d\tau\overset{L^1_{x,v}}\longrightarrow & \int_0^t h^\#(\tau)\,d\tau,\quad\text{as }n\to\infty.\label{h_n to h in time}
  \end{align}
  Let $f_n\in\mathcal{F}^+_T$  be the  mild solution to the problem
\begin{equation}\label{linear problem n no sharps}
\begin{cases}
\displaystyle\frac{\,df_n}{\,dt} + v\cdot \nabla_x f_n= h_n - L(f_n,g,g),\\
f_n(0)=f_{0,n},
\end{cases}
\end{equation}
constructed in Lemma \ref{linear lemma}.  Let us note that Lemma \ref{linear lemma} is applicable  for all $n\in\mathbb{N}$   since $f_{0,n}, h_n$ satisfy \eqref{additional assumption on h}. Hence, $f_n^\#$ satisfies
\begin{equation}\label{linear problem n}
\begin{cases}
\displaystyle\frac{\,df_n^\#}{\,dt}+L^\#(f_n,g,g)= h_n^\#,\\
f_n^\#(0)=f_{0,n},
\end{cases}
\end{equation}
and is given by the formula
 \begin{equation}\label{explicit formula for f_n}
 f_n^\#(t):=
f_{0,n} \exp\left(-\displaystyle\int_0^t R^\#(g,g)(\sigma)\,d\sigma\right)+\displaystyle\int_0^th_n^\#(\tau)\exp\left(-\displaystyle\int_\tau^tR^\#(g,g)(\sigma)\,d\sigma\right)\,d\tau,\quad t\in[0,T).
 \end{equation}
  Also by \eqref{equation on linear derivative lemma}, given $t\in[0,T)$, we have the bound
\begin{align}
\sup_{n\in\mathbb{N}}\|f^\#_n(t)\|_{L^1_{x,v}}&\leq \sup_{n\in\mathbb{N}}\left( \|f_{0,n}\|_{L^1_{x,v}}+\int_0^t\|h_n^\#(\tau)\|_{L^1_{x,v}}\,d\tau\right)\leq \|f_0\|_{L^1_{x,v}}+\int_0^t\|h^\#(\tau)\|_{L^1_{x,v}}\,d\tau<\infty,\label{less than inf}
\end{align}
where to obtain the last bound we use \eqref{phi_n conv of norms}-\eqref{h_n conv of norms}, the fact that $R^\#(g,g)\geq 0$ (by monotonicity of $R^\#$ and $g\geq 0$), $f_0\in L^{1}_{x,v}$ and $h^\#\in L^1_{loc}([0,T),L^{1,+}_{x,v})$.

Since the sequences  $(f_{0,n})_{n}$, $(h_n^\#(t))_{n}$ are increasing and non-negative for all $t\in[0,T)$, formula \eqref{explicit formula for f_n} implies that the sequence $(f_n^\#(t))_{n}$ is increasing for all $t\in[0,T)$. Let us define 
$$f^\#(t):=\lim_{n\to\infty} f_n^\#(t).$$ 
Clearly $f\geq 0$.
By the Monotone Convergence Theorem and bound \eqref{less than inf} we obtain  that $f^\#(t)\in L^{1,+}_{x,v},\quad\forall t\in [0,T)$.
 Then, the Dominated Convergence Theorem implies
\begin{equation}\label{L1 convergence f_n to f}
\forall t\in [0,T):\quad f_n^\#(t)\overset{L^1_{x,v}}\longrightarrow f^\#(t),\quad\text{as }n\to\infty.
\end{equation}
Moreover,  we have
\begin{equation}\label{monotone convergence Ln}
\forall t\in[0,T):\quad L^\#(f_n,g,g)(t)=f_n^\#(t)R^\#(g,g)(t)\nearrow f^\#(t)R^\#(g,g)(t)=L^\#(f,g,g)(t),\quad\text{as }n\to\infty,
\end{equation}
since $R^\#(g,g)(t)\geq 0$ by monotonicity of $R^\#$ and the fact that $g\geq 0$. By the Monotone Convergence Theorem, we obtain
\begin{equation}\label{monotone convergence Ln L1}
\forall t\in[0,T):\quad \int_0^t\|L^\#(f_n,g,g)(\tau)\|_{L^1_{x,v}}\,d\tau\nearrow\int_0^t\|L^\#(f,g,g)(\tau)\|_{L^1_{x,v}}\,d\tau,\quad\text{as }n\to\infty.
\end{equation}
Therefore, for any $t\in[0,T)$, equation \eqref{equation on linear derivative lemma}, implies
\begin{align}
\int_0^t\|L^\#(f,g,g)(\tau)\|_{L^1_{x,v}}\,d\tau&=\sup_{n\in\mathbb{N}}\int_0^t\|L^\#(f_n,g,g)(\tau)\|_{L^1_{x,v}}\,d\tau\\
&\leq \sup_{n\in\mathbb{N}}\left(\|f_{0,n}\|_{L^1_{x,v}}+\int_0^t\|h_n^\#(\tau)\|_{L^1_{x,v}}\,d\tau\right)\nonumber\\
&\leq \|f_0\|_{L^1_{x,v}}+\int_0^t\|h^\#(\tau)\|_{L^1_{x,v}}\,d\tau<\infty,\label{less than inf 2}
\end{align}
since $f_0\in L^{1}_{x,v}$ and $h^\#\in L^1_{loc}([0,T),L^{1,+}_{x,v})$,
thus
\begin{align}
L^\#(f,g,g)(t)&\in L^1_{x,v},\quad\text{for a.e. }t\in [0,T),\label{in L1 a.e.}\\
L^\#(f,g,g)&\in L^1_{loc}([0,T),L^{1,+}_{x,v}).\label{L in L1_loc}
\end{align}
By \eqref{monotone convergence Ln}, \eqref{in L1 a.e.} and the Dominated Convergence Theorem, for a.e. $t\in[0,T)$, we have
\begin{equation}\label{L_n to L in L1}
L^\#(f_n,g,g)(t)\overset{L^1_{x,v}}\longrightarrow L^\#(f,g,g)(t),\quad\text{as $n\to\infty$},
\end{equation}
and by \eqref{L in L1_loc} and another application of the Dominated Convergence Theorem, we obtain
\begin{equation}\label{conv of L integrals}
\int_0^t L^\#(f_n,g,g)(\tau)\,d\tau\overset{L^1_{x,v}}\longrightarrow \int_0^tL^\#(f,g,g)(\tau)\,d\tau,\quad\forall t\in[0,T).
\end{equation}
Since $f_n^\#$ satisfies \eqref{linear problem n},  the Fundamental Theorem of Calculus and the fact that $f_n^\#\in C^0([0,T),L^{1,+}_{x,v})$, $L^\#(f_n,g,g)$ and $h_n^\#\in L^1_{loc}([0,T),L^{1,+}_{x,v})$ imply
\begin{equation}\label{integral equation n}
f_n^\#(t)+\int_0^tL^\#(f_n,g,g)(\tau)\,d\tau=f_{0,n}+\int_0^t h_n^\#(\tau)\,d\tau,\quad\forall t\in[0,T),\quad\forall n\in\mathbb{N}.
\end{equation}
Using \eqref{L1 convergence f_n to f}, \eqref{conv of L integrals}, \eqref{phi_n to phi}, and \eqref{h_n to h in time}, we let $n\to\infty$ in \eqref{integral equation n} to obtain
\begin{equation}\label{integral equation final}
f^\#(t)+\int_0^tL^\#(f,g,g)(\tau)\,d\tau=f_0+\int_0^t h^\#(\tau)\,d\tau,\quad\forall t\in[0,T),\quad\forall n\in\mathbb{N},
\end{equation}
thus $f^\#\in C^0([0,T),L^{1,+}_{x,v})$, $f^\#$ is weakly differentiable and  satisfies \eqref{linear problem}. We conclude that $f$ is a mild solution of \eqref{linear problem no sharps}. Moreover, since $g\geq 0$,  we may take the limit as $n\to\infty$ in both sides of \eqref{explicit formula for f_n}  to obtain \eqref{explicit formula for f}.

 {\em Uniqueness}: Since the problem is linear it suffices to show that if $f$ is a solution of \eqref{linear problem no sharps} with $f_0=0$ and $h=0$, then $f=0$.

Assume $f$ is a mild solution of \eqref{linear problem no sharps}  with $f_0=0$ and $h=0$ i.e. $f\geq 0$, $f^\#\in C^0([0,T),L^{1,+}_{x,v})$, $L^\#(f,g,g)\in L^1_{loc}([0,T),L^{1,+}_{x,v})$ and $f^\#$ is weakly differentiable and satisfies 
\begin{equation}\label{linear problem uniqueness}
\begin{cases}
\displaystyle\frac{\,d f^{\#}}{\,dt}+L^\#(f,g,g)=0,\\
f^\#(0)=0.
\end{cases}
\end{equation}
Then  \eqref{linear problem uniqueness}, the Fundamental Theorem of Calculus and the facts $f^\#\in C^0([0,T),L^{1,+}_{x,v})$, $L^\#(f,g,g)\in L^1_{loc}([0,T),L^{1,+}_{x,v})$ imply
\begin{equation}\label{integral f linear}
f^\#(t)=-\int_0^t L^\#(f,g,g)(\tau)\,d\tau=-\int_0^t f^\#(\tau)R^\#(g,g)(\tau)\,d\tau,\quad\forall t\in[0,T).
\end{equation}
We claim the following

\textbf{Claim}: For any compact set $K\subseteq\mathbb{R}^d\times\mathbb{R}^d$, we have $\|f^\#(t)\|_{L^1_{x,v}(K)}=0,\quad\forall t\in[0,T)$.
 
\textbf{Proof of the claim}: Fix any compact set $K\subseteq\mathbb{R}^d\times\mathbb{R}^d$. By \eqref{integral f linear}, Fubini's Theorem, part \textit{(i)} of Lemma \ref{bound on R tilde lemma} and the fact that $p_{\gamma_2,\gamma_3}$ is continuous,  we obtain
\begin{align}
\|f^\#(t)\|_{L^1_{x,v}(K)}&\leq\int_0^t \|f^\#(\tau)R^\#(g,g)(\tau)\|_{L^1_{x,v}(K)}\,d\tau\nonumber\\
&\leq C_\beta|||g^\#|||_\infty(1+|||g^\#|||_\infty)\int_0^t \|p_{\gamma_2,\gamma_3}f^\#(\tau)\|_{L^1_{x,v}(K)}\,d\tau\nonumber\\
&\leq C_{K,\beta}|||g^\#|||_\infty(1+|||g^\#|||_\infty)\int_0^t\|f^\#(\tau)\|_{L^1_{x,v}(K)}\,d\tau.\label{continuity bound Gronwall}
\end{align}
 Since $f^\#\in C^0([0,T),L^{1,+}_{x,v})$,  the map $t\in[0,T)\to \|f^\#(t)\|_{L^1_{x,v}(K)}\in [0,\infty)$ is continuous, thus \eqref{continuity bound Gronwall} and Gronwall's inequality imply that 
 $$\|f^\#(t)\|_{L^1_{x,v}(K)}=0,\quad\forall t\in [0,T).$$
 The claim is proved.
 
 Consider now a sequence of compact sets $(K_m)_{m}\nearrow\mathbb{R}^d\times\mathbb{R}^d$. By the claim above, and the Monotone Convergence Theorem, we have
 $$\|f^\#(t)\|_{L^1_{x,v}}=\lim_{m\to\infty} \|f^\#(t)\|_{L^1_{x,v}(K_m)}=0,\quad\forall t\in [0,T).$$
 Since $f^\#\geq 0$, we obtain $f^\#=0$ and hence $f=0$.
 Uniqueness is proved.
\end{proof}

The following comparison Corollary comes immediately by the monotonicity of $R^\#$ and    representation \eqref{explicit formula for f}.
\begin{corollary}\label{comparision corollary from prop} Let  $0<T\leq\infty$ and $\alpha,\beta>0$. Consider $f_{0,1},f_{0,2}\in L^{1,+}_{x,v}$, $g_1,g_2\in L^{\infty}([0,T),\mathcal{M}_{\alpha,\beta}^+)$ and $h_1,h_2\in L_{loc}^{1}([0,T),L^{1,+}_{x,v})$ with
$$ f_{0,1}\leq f_{0,2},\quad g_1^\#\geq g_2^\#,\quad h_1^\#\leq h_2^\#.$$
Let $f_i$, $i\in\{1,2\}$ be the corresponding unique solution of \eqref{linear problem no sharps} with $f_0:=f_{0,i}$, $g:=g_i$ and $h:=h_i$. Then
$f_1\leq f_2.$
\end{corollary}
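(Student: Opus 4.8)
The plan is to read the conclusion off directly from the explicit representation formula \eqref{explicit formula for f} furnished by Proposition \ref{linear prop}. By that proposition, for each $i\in\{1,2\}$ the unique mild solution $f_i$ of \eqref{linear problem no sharps} with data $f_{0,i},g_i,h_i$ satisfies
\begin{equation*}
f_i^\#(t)=f_{0,i}\exp\!\left(-\int_0^t R^\#(g_i,g_i)(\sigma)\,d\sigma\right)+\int_0^t h_i^\#(\tau)\exp\!\left(-\int_\tau^t R^\#(g_i,g_i)(\sigma)\,d\sigma\right)d\tau,\qquad t\in[0,T).
\end{equation*}
So it suffices to compare the two formulas termwise.

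First I would compare the exponential weights. Since $g_1^\#\geq g_2^\#$, Remark \ref{remark on measure preserving} gives $g_1\geq g_2$, and the monotonicity of $R^\#$ (part \textit{(iv)} of Proposition \ref{monotonicity proposition sharp}), applied in each argument together with $g_1,g_2\in\mathcal{F}_T^+$, yields
\begin{equation*}
R^\#(g_1,g_1)(\sigma)\geq R^\#(g_2,g_2)(\sigma)\geq 0,\qquad\text{a.e. }(x,v),\ \text{all }\sigma\in[0,T).
\end{equation*}
Integrating in $\sigma$ over $[0,t]$ and over $[\tau,t]$, and using that $x\mapsto e^{-x}$ is decreasing, I obtain
\begin{equation*}
0\leq\exp\!\left(-\int_\tau^t R^\#(g_1,g_1)(\sigma)\,d\sigma\right)\leq\exp\!\left(-\int_\tau^t R^\#(g_2,g_2)(\sigma)\,d\sigma\right)\leq 1,
\end{equation*}
and likewise with $\tau$ replaced by $0$.

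Next I would combine these with the hypotheses on the remaining data. From $f_{0,i}\in L^{1,+}_{x,v}$ and $f_{0,1}\leq f_{0,2}$ we get $0\leq f_{0,1}\leq f_{0,2}$, so the first terms are ordered; from $h_i^\#\in L^1_{loc}([0,T),L^{1,+}_{x,v})$ and $h_1^\#\leq h_2^\#$ we get $0\leq h_1^\#(\tau)\leq h_2^\#(\tau)$, and multiplying the ordered nonnegative factors and integrating in $\tau$ keeps the Duhamel terms ordered. Adding the two contributions gives $f_1^\#(t)\leq f_2^\#(t)$ for all $t\in[0,T)$, hence $f_1^\#\leq f_2^\#$, and one final application of Remark \ref{remark on measure preserving} yields $f_1\leq f_2$. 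There is no genuine obstacle here; the only point requiring care is that the termwise ordering of the Duhamel integrands relies on the nonnegativity of both $h_i^\#$ and $R^\#(g_i,g_i)$ — if the source terms were allowed to change sign, the ordering of the exponential weights would no longer combine correctly with that of the $h_i^\#$, and the comparison could fail.
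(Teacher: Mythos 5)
Your proposal is correct and follows essentially the same route as the paper: the paper's proof likewise deduces $g_1\geq g_2$, invokes monotonicity of $R^\#$ to get $R^\#(g_1,g_1)\geq R^\#(g_2,g_2)$, and concludes directly from the representation formula \eqref{explicit formula for f}. Your write-up merely makes explicit the termwise comparison (including the role of nonnegativity of $f_{0,i}$ and $h_i^\#$) that the paper leaves implicit.
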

\begin{proof} We have $g_1^\#\geq g_2^\#\Rightarrow g_1\geq g_2$. By monotonicity of $R^\#$ we obtain  $ R^\#(g_1,g_1)\geq R^\#(g_2,g_2)$. The claim then comes immediately by representation \eqref{explicit formula for f}.
\end{proof}

\subsection{The Kaniel-Shinbrot iteration}
Now, we will use well-posedness of the associated linear problem and the estimates developed in Section \ref{sec: properties of gain and loss} to prove convergence of the Kaniel-Shinbrot iteration to the unique solution of \eqref{generalized boltzmann equation} in the class of functions bounded by a Maxwellian, if an appropriate beginning condition is satisfied. 

 Let $0<T\leq\infty$ and $\alpha,\beta>0$. Consider a function $f_0\in\mathcal{M}_{\alpha,\beta}^+$ and a pair of functions  $(l_0^\#,u_0^\#)\in\mathcal{M}_{\alpha,\beta}^+\times\mathcal{M}_{\alpha,\beta}^+$. By part \textit{(ii) of Lemma \ref{bound on R tilde lemma}} we have that $G^\#(l_0,l_0,l_0), G^\#(u_0,u_0,u_0)\in L^\infty([0,T),L^{1,+}_{x,v})$. Applying Proposition \ref{linear prop} with $h$ being either $G(l_0,l_0,l_0)$ or $G(u_0,u_0,u_0)$, we find unique functions $l_1, u_1$  such that $l_1$ is the mild solution of
 \begin{equation}\label{diffeq for loss n=1}
\begin{aligned}
\frac{\,d l_1}{\,dt}+ v \cdot \nabla_x l_1  &=G(l_{0},l_{0},l_{0}) - L(l_1,u_{0},u_{0}),\\
l_1(0)&=f_0,
\end{aligned}
\end{equation}
and $u_1$ is the mild solution of
\begin{equation}\label{diffeq for gain n=1}
\begin{aligned}
\frac{\,d u_1}{\,dt}+  v\cdot \nabla_x u_1  &=G(u_{0},u_{0},u_{0}) -  L(u_1,l_{0},l_{0}),\\
u_1(0)&=f_0.
\end{aligned}
\end{equation}
We obtain the following result
\begin{theorem}\label{iteration proposition} Let  $0<T\leq\infty$, $\alpha,\beta>0$ and 
$$K_\beta=C_d\bigg[\|b_2\|_{L^1(\mathbb{S}^{d-1})}(\beta^{-d/2}+\frac{1}{d+\gamma_2-1})+\|b_3\|_{L^1(\mathbb{S}^{2d-1})}(\beta^{-d}+\frac{1}{2d+\gamma_3-1})\bigg],$$
be the constant given in \eqref{constant K text}. Consider $f_0\in\mathcal{M}_{\alpha,\beta}^+$ and $(l_0^\#,u_0^\#)\in\mathcal{M}_{\alpha,\beta}^+\times\mathcal{M}_{\alpha,\beta}^+$ with
\begin{equation}\label{smallness of u_0}
\|u_0^\#\|_{\mathcal{M}_{\alpha,\beta}}< \lambda_{\alpha,\beta}, 
\end{equation}
where
\begin{equation}\label{lambda text}
\lambda_{\alpha,\beta}=\min\left\{\frac{\alpha^{1/2}}{24K_\beta},\frac{\alpha^{1/4}}{2\sqrt{6K_\beta}}\right\}.
\end{equation}
Let $l_1,u_1$ be the mild solutions to \eqref{diffeq for loss n=1} and \eqref{diffeq for gain n=1} respectively, and assume that the following beginning condition holds
\begin{equation}\label{beginning condition}
0\leq  l_0^\#\leq l_1^\#(t)\leq u_1^\#(t)\leq u_0^\#,\quad\forall t\in [0,T).
\end{equation}
Then we conclude the following

\textit{(i)} There are unique sequences $(l_n)_{n}$, $(u_n)_{n}$ such that, for any $n\in\mathbb{N}$, $l_n, u_n$ are the mild solution to \eqref{diffeq for loss n}, \eqref{diffeq for gain n} respectively.
Moreover, for any $n\in\mathbb{N}$, we have
\begin{equation}\label{propagation of inequalities}
  0\leq l_0^\#\leq l_1^\#(t)\leq...\leq l_n^\#(t)\leq u_n^\#(t)\leq...\leq u_1^\#(t)\leq u_0^\#,\quad\forall t\in [0,T).
\end{equation}

\textit{(ii)} For all $t\in[0,T)$, the sequences $\left(l_n^\#(t)\right)_{n}$,  $\left(u_n^\#(t)\right)_{n}$ converge in $\mathcal{M}_{\alpha,\beta}$. Let us define  $$l^\#(t):=\lim_{n\to\infty}l_n^\#(t),\quad u^\#(t):=\lim_{n\to\infty}u_n^\#(t),\quad t\in[0,T).$$ Then, we  conclude that
\begin{align*}
&l^\#,u^\#\in C^0([0,T),L^{1,+}_{x,v})\cap L^\infty([0,T),\mathcal{M}_{\alpha,\beta}^+),\\
&L^\#(l,u,u), L^\#(u,l,l), G^\#(l,l,l), G^\#(u,u,u)\in L^\infty([0,T),L^{1,+}_{x,v}),
\end{align*}
 and the following integral equations are satisfied
\begin{align}
l^\#(t)+\int_0^tL^\#(l,u,u)(\tau)\,d\tau&=f_0+\int_0^t G^\#(l,l,l)(\tau)\,d\tau,\quad\forall t\in[0,T),\label{integrated l}\\
u^\#(t)+\int_0^tL^\#(u,l,l)(\tau)\,d\tau&=f_0+\int_0^t G^\#(u,u,u)(\tau)\,d\tau,\quad\forall t\in[0,T).\label{integrated u}
\end{align}

\textit{(iii)} The limits $l,u$ coincide i.e. $u=l$.

\textit{(iv)} Let us define  $f:=l=u$. Then $f$ is the unique mild solution of the binary-ternary  Boltzmann equation \eqref{generalized boltzmann equation} in $[0,T)$, with initial data $f_0\in\mathcal{M}_{\alpha,\beta}^+$ satisfying 
\begin{equation}\label{condition on uniqueness iteration}
|||f^\#|||_{\infty}\leq\|u_0^\#\|_{\mathcal{M}_{\alpha,\beta}}.
\end{equation}
\end{theorem}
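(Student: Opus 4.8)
The plan is to prove Theorem \ref{iteration proposition} in the four parts in which it is stated, using the linear well-posedness from Proposition \ref{linear prop}, the monotonicity/$L^1$-identity from Proposition \ref{monotonicity proposition sharp}, the $L^\infty L^1$ bounds of Lemma \ref{bound on R tilde lemma} and Proposition \ref{permutation lemma}, and the time-averaged bounds of Proposition \ref{bounds on operators proposition}. I would proceed as follows.

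\textbf{Part (i): construction and propagation of the ordering.} First I would set up the induction. Suppose $l_0^\#,\dots,l_{n-1}^\#$ and $u_0^\#,\dots,u_{n-1}^\#$ have been constructed as elements of $L^\infty([0,T),\mathcal{M}_{\alpha,\beta}^+)$ satisfying \eqref{propagation of inequalities} up to index $n-1$. By part \textit{(ii)} of Lemma \ref{bound on R tilde lemma}, $G^\#(l_{n-1},l_{n-1},l_{n-1})$ and $G^\#(u_{n-1},u_{n-1},u_{n-1})$ lie in $L^\infty([0,T),L^{1,+}_{x,v})\subseteq L^1_{loc}([0,T),L^{1,+}_{x,v})$, so Proposition \ref{linear prop} applies with $g:=u_{n-1}$, $h:=G(l_{n-1},l_{n-1},l_{n-1})$ to produce $l_n$, and with $g:=l_{n-1}$, $h:=G(u_{n-1},u_{n-1},u_{n-1})$ to produce $u_n$; uniqueness of each $l_n,u_n$ is inherited from Proposition \ref{linear prop}. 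The ordering \eqref{propagation of inequalities} then follows by repeated use of the comparison Corollary \ref{comparision corollary from prop}: to get $l_{n-1}^\#\le l_n^\#$ compare the linear problems defining $l_{n-1}$ and $l_n$, noting $G(l_{n-2},l_{n-2},l_{n-2})\le G(l_{n-1},l_{n-1},l_{n-1})$ (monotonicity of $G^\#$ and $l_{n-2}^\#\le l_{n-1}^\#$) and $u_{n-2}^\#\ge u_{n-1}^\#$; similarly $u_n^\#\le u_{n-1}^\#$; and to get $l_n^\#\le u_n^\#$ compare their defining problems using $G(l_{n-1},\dots)\le G(u_{n-1},\dots)$ and $u_{n-1}^\#\ge l_{n-1}^\#$. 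The beginning condition \eqref{beginning condition} anchors the base case $n=1$. Since all iterates are squeezed between $0$ and $u_0^\#$, they all lie in $L^\infty([0,T),\mathcal{M}_{\alpha,\beta}^+)$ with $|||l_n^\#|||_\infty,|||u_n^\#|||_\infty\le\|u_0^\#\|_{\mathcal{M}_{\alpha,\beta}}$.

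\textbf{Part (ii): passage to the limit.} For each fixed $(t,x,v)$ the sequences $l_n^\#(t,x,v)$ and $u_n^\#(t,x,v)$ are monotone (increasing and decreasing respectively) and bounded by $u_0^\#(x,v)$, hence converge pointwise; call the limits $l^\#,u^\#$. Since all iterates are bounded by $M_{\alpha,\beta}\|u_0^\#\|_{\mathcal{M}_{\alpha,\beta}}$, we get $l_n^\#(t)\overset{\mathcal{M}_{\alpha,\beta}}\longrightarrow l^\#(t)$ and $u_n^\#(t)\overset{\mathcal{M}_{\alpha,\beta}}\longrightarrow u^\#(t)$ for every $t$, and $l^\#,u^\#\in L^\infty([0,T),\mathcal{M}_{\alpha,\beta}^+)$. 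Each $l_n^\#$ satisfies the integral form of its defining equation,
\begin{equation*}
l_n^\#(t)+\int_0^tL^\#(l_n,u_{n-1},u_{n-1})(\tau)\,d\tau=f_0+\int_0^tG^\#(l_{n-1},l_{n-1},l_{n-1})(\tau)\,d\tau,
\end{equation*}
and I would pass $n\to\infty$ using Corollary \ref{L1 continuity} to handle $L^\#(l_n,u_{n-1},u_{n-1})(\tau)\to L^\#(l,u,u)(\tau)$ and $G^\#(l_{n-1},l_{n-1},l_{n-1})(\tau)\to G^\#(l,l,l)(\tau)$ in $L^1_{x,v}$ for each $\tau$, together with a dominated-convergence argument in $\tau$ justified by the uniform $L^\infty L^1$ bounds of Lemma \ref{bound on R tilde lemma}; this yields \eqref{integrated l}, and symmetrically \eqref{integrated u}. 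Continuity $l^\#\in C^0([0,T),L^{1,+}_{x,v})$ follows from the integral equation since its right-hand side is continuous in $t$ (the integrands being in $L^\infty([0,T),L^1_{x,v})$), and likewise for $u^\#$; membership of all the $L^\#$ and $G^\#$ terms in $L^\infty([0,T),L^{1,+}_{x,v})$ is again Lemma \ref{bound on R tilde lemma}.

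\textbf{Part (iii): $l=u$, and Part (iv).} Set $w^\#:=u^\#-l^\#\ge0$. Subtracting \eqref{integrated l} from \eqref{integrated u} and using the factorization $L^\#(\cdot,g,g)=(\cdot)^\#R^\#(g,g)$ together with trilinearity of $G^\#$, one writes $w^\#(t)$ as a time integral of terms each of which is bounded pointwise by $w^\#(\tau)$ or $l^\#(\tau)$ or $u^\#(\tau)$ times a factor of $R^\#$ or an integrated collision kernel; the key is that each term carries an \emph{a priori} small constant. Concretely, invoking Proposition \ref{bounds on operators proposition} to bound the time integrals $\int_0^t|L^\#|,\int_0^t|G^\#|$ by $K_\beta\alpha^{-1/2}M_{\alpha,\beta}$ times products of the $|||\cdot|||_\infty$-norms, and using that these norms are all $\le\|u_0^\#\|_{\mathcal{M}_{\alpha,\beta}}<\lambda_{\alpha,\beta}$, I expect to obtain an estimate of the form $|||w^\#|||_\infty\le \theta\,|||w^\#|||_\infty$ with $\theta<1$ (the smallness \eqref{smallness of u_0}, \eqref{lambda text} being exactly tuned so that the constant, which will look like a multiple of $K_\beta\alpha^{-1/2}\|u_0^\#\|_{\mathcal{M}_{\alpha,\beta}}(1+\|u_0^\#\|_{\mathcal{M}_{\alpha,\beta}})$, is strictly below $1$); hence $w^\#=0$, i.e. $u=l$. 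Then $f:=l=u$ satisfies \eqref{integrated l} with $u$ replaced by $f$ everywhere, which together with Part (ii) gives exactly conditions (i)--(iii) of Definition \ref{boltzmann mild solution}, so $f$ is a mild solution of \eqref{generalized boltzmann equation}, and the bound $|||f^\#|||_\infty\le\|u_0^\#\|_{\mathcal{M}_{\alpha,\beta}}$ is inherited from \eqref{propagation of inequalities}. Uniqueness in the class of solutions satisfying \eqref{condition on uniqueness iteration}: if $\tilde f$ is another such solution, estimate $|||(f-\tilde f)^\#|||_\infty$ the same way — subtract the integral equations, use the factorization and trilinearity to extract differences, and apply Proposition \ref{bounds on operators proposition} — to get $|||(f-\tilde f)^\#|||_\infty\le\theta\,|||(f-\tilde f)^\#|||_\infty$ with $\theta<1$, forcing $f=\tilde f$.

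\textbf{Main obstacle.} The delicate point is Part (iii) (and the parallel uniqueness estimate): one must expand the difference $u^\#-l^\#$ (respectively $f^\#-\tilde f^\#$) so that \emph{every} resulting term is a product of one ``small'' $|||\cdot|||_\infty$ factor and one quantity controlled via the time-averaged bounds of Proposition \ref{bounds on operators proposition}, and then verify that the aggregate constant — with its binary and ternary pieces, the extra $(1+|||\cdot|||_\infty)$ from the mixed bound \eqref{bound on total loss}, and the asymmetry of the ternary operator requiring the symmetrized estimates of Proposition \ref{permutation lemma} — is genuinely $<1$ under \eqref{smallness of u_0}--\eqref{lambda text}. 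Tracking constants carefully here is what makes the smallness threshold in \eqref{lambda text} (and ultimately \eqref{condition for existence}) come out as stated.
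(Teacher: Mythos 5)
Your plan follows essentially the same route as the paper's proof: inductive construction of $(l_n,u_n)$ via Proposition \ref{linear prop} and the comparison Corollary \ref{comparision corollary from prop}, monotone convergence plus Corollary \ref{L1 continuity} and dominated convergence to pass to the limiting integral equations, and a contraction estimate for $u^\#-l^\#$ (and for uniqueness) built from multilinearity and the time-averaged bounds of Proposition \ref{bounds on operators proposition}, with smallness \eqref{smallness of u_0}--\eqref{lambda text} making the constant strictly below $1$. The only part left as an expectation, the constant tracking in (iii), works out exactly as you anticipate (the paper gets $12K_\beta\alpha^{-1/2}(\|u_0^\#\|_{\mathcal{M}_{\alpha,\beta}}+\|u_0^\#\|^2_{\mathcal{M}_{\alpha,\beta}})<1$, and Proposition \ref{permutation lemma} is in fact not needed there), so the proposal is correct.
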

 \begin{remark} The uniqueness claimed above holds in the class of solutions satisfying \eqref{condition on uniqueness iteration}.
 \end{remark}
\begin{proof}
\textit{(i)}: We will construct  sequences $(l_n)_n$, $(u_n)_n$ satisfying \eqref{diffeq for loss n}-\eqref{propagation of inequalities} inductively.
\begin{itemize}
\item $n=1$: $l_1$, $u_1$ satisfy \eqref{diffeq for loss n} for $k=1$ by assumption. Moreover \eqref{propagation of inequalities} reduces for $k=1$ to the assumption \eqref{beginning condition}.
\item Assume  we have constructed $l_1,...,l_{n-1},u_1,...,u_{n-1}$ satisfying \eqref{diffeq for loss n} and
\begin{equation}\label{propagation induction k=n-1}
 l_0^\#\leq l_1^\#(t)\leq...\leq l_{n-1}^\#(t)\leq u_{n-1}^\#(t)\leq...\leq u_1^\#(t)\leq u_0^\#,\quad\forall t\in [0,T).
\end{equation}
Let $l_n,u_n$ be the mild solutions  of \eqref{diffeq for loss n}, \eqref{diffeq for gain n} for $k=n$ respectively , given by Proposition \ref{linear prop}.  Having in mind \eqref{propagation induction k=n-1}, in order to  prove \eqref{propagation of inequalities}, it suffices to show
\begin{equation}\label{sufficient condition k=n}
l_{n-1}^\#(t)\leq l_n^\#(t)\leq u_n^\#(t)\leq u_{n-1}^\#(t),\quad\forall t\in [0,T).
\end{equation}
Fix any $t\in[0,T)$. Then \eqref{propagation induction k=n-1} and Proposition \ref{monotonicity proposition sharp}, which gives monotonicity of $G^\#$, yield that for any $t\in[0,T)$, we have
\begin{equation}\label{gain inequalities}
\begin{aligned}
G^\#(l_{n-2},l_{n-2},l_{n-2})(t)&\leq G^\#(l_{n-1},l_{n-1},l_{n-1})(t)\\
&\leq G^\#(u_{n-1},u_{n-1},u_{n-1})(t)\\
&\leq G^\#(u_{n-2},u_{n-2},u_{n-2})(t).
\end{aligned}
\end{equation}
Using \eqref{propagation induction k=n-1}, \eqref{gain inequalities} and Corollary \ref{comparision corollary from prop} with
 $$g_1^\#=u^\#_{n-2},\quad g_2^\#=u^\#_{n-1}, \quad h_1^\#=G^\#(l_{n-2},  l_{n-2},l_{n-2}), h_2^\#=G^\#(l_{n-1}, l_{n-1},l_{n-1}),$$ 
we obtain $l^\#_{n-1}\leq l^\#_{n}.$
 Similarly, using Corollary \ref{comparision corollary from prop} for $g_1^\#=u^\#_{n-1}$, $g_2^\#=l^\#_{n-1}$, $h_1^\#=G^\#(l_{n-1},l_{n-1},l_{n-1})$, $h_2^\#=G^\#(u_{n-1},u_{n-1},u_{n-1})$, we obtain $l^\#_n\leq u_n^\#$, and using it for $g_1^\#=l^\#_{n-1}$, $g_2^\#=l^\#_{n-2}$, $h_1^\#=G^\#(u_{n-1},u_{n-1},u_{n-1})$, $h_2^\#=G^\#(u_{n-2},u_{n-2},u_{n-2})$, we obtain $u_n^\#\leq u_{n-1}^\#$. Condition \eqref{sufficient condition k=n} is proved and the claim follows.
\end{itemize}

\item \textit{(ii)}: To prove convergence, notice that \eqref{propagation of inequalities} implies that, for any $t\in[0,T)$,
 the sequence $(l_n^\#(t))_{n}$  is increasing  and upper bounded and the sequence $(u_n^\#(t))_{n}$ is decreasing and lower bounded, thus they are convergent. Let us define 
\begin{align*} 
 l^\#(t):=\lim_{n\to\infty} l_n^\#(t),\quad u^\#(t):=\lim_{n\to\infty} u_n^\#(t),\quad t\in[0,T).
 \end{align*}
 Since $u_0^\# \in \mathcal{M}^+_{\alpha, \beta}$,  estimate \eqref{propagation of inequalities} actually implies  that the convergence takes place in $\mathcal{M}_{\alpha,\beta}$ and that $l^\#,u^\#\in L^\infty([0,T),\mathcal{M}_{\alpha,\beta}^+)$. Thus relations \eqref{L in L infty} from Lemma \ref{bound on R tilde lemma} imply that 
\begin{equation}\label{operators in Linf}
L^\#(l,u,u), \text{ }L^\#(u,l,l), \text{ } G^\#(l,l,l),  \text{ }G^\#(u,u,u)\in L^\infty([0,T),L^{1,+}_{x,v}).
\end{equation}
  Moreover, since for any $t\in[0,T)$ we have
   $$(l^\#_n,u_{n-1}^\#,u_{n-1}^\#)(t)\overset{\mathcal{M}_{\alpha,\beta}}\longrightarrow (l^\#,u^\#,u^\#)(t),\quad (u^\#_n,l_{n-1}^\#,l_{n-1}^\#)(t)\overset{\mathcal{M}_{\alpha,\beta}}\longrightarrow (u^\#,l^\#,l^\#)(t),$$
    as $n\to\infty$,  Corollary \ref{L1 continuity} implies that for any $t\in[0,T)$, we have
\begin{equation}\label{loss convergence}
L^\#(l_{n},u_{n-1},u_{n-1})(t)\overset{L^1_{x,v}}\longrightarrow L^\#(l,u,u),\quad L^\#(u_{n},l_{n-1},l_{n-1})(t)\overset{L^1_{x,v}}\longrightarrow L^\#(u,l,l).
\end{equation}
 Similarly, for any $t\in[0,T)$, we obtain
 \begin{equation}\label{gain convergence}
G^\#(l_{n-1},l_{n-1},l_{n-1})(t)\overset{L^1_{x,v}}\longrightarrow G^\#(l,l,l),\quad G^\#(u_{n-1},u_{n-1},u_{n-1})(t)\overset{L^1_{x,v}}\longrightarrow G^\#(u,u,u).
\end{equation}
Moreover, by relation \eqref{propagation of inequalities}, monotonicity of $L^\#,G^\#$,  and the fact that $u_0^\#\in\mathcal{M}_{\alpha,\beta}^+$, Lemma \ref{bound on R tilde lemma} implies
\begin{equation}\label{L1 bound n}
\begin{aligned}
L^\#(l_n,u_{n-1},u_{n-1}),G^\#(l_{n-1},l_{n-1},l_{n-1})\in L^\infty([0,T),L^1_{x,v}),\quad\forall n\in\mathbb{N},\\
L^\#(u_n,l_{n-1},l_{n-1}),G^\#(u_{n-1},u_{n-1},u_{n-1})\in L^\infty([0,T),L^1_{x,v}),\quad\forall n\in\mathbb{N}.
\end{aligned}
\end{equation}
Recalling Definition \ref{boltzmann mild solution}, the initial value problems  \eqref{diffeq for loss n}, \eqref{diffeq for gain n} and  the Fundamental Theorem of Calculus imply that
 for all $n\in\mathbb{N}$ we have
\begin{align}
l_n^\#(t)+\int_0^tL^\#(l_n,u_{n-1},u_{n-1})(\tau)\,d\tau&=f_0+\int_0^t G^\#(l_{n-1},l_{n-1},l_{n-1})(\tau)\,d\tau,\quad\forall t\in[0,T),\label{integrated n l}\\
u_n^\#(t)+\int_0^tL^\#(u_n,l_{n-1},l_{n-1})(\tau)\,d\tau&=f_0+\int_0^t G^\#(u_{n-1},u_{n-1},u_{n-1})(\tau)\,d\tau,\quad\forall t\in[0,T).\label{integrated  n u}
\end{align}
Letting $n\to\infty$ and using   the Dominated Convergence Theorem, we obtain \eqref{integrated l}-\eqref{integrated u}. The fact that  $l^\#,u^\#\in C^0([0,T),L^1_{x,v})$ easily follows from \eqref{integrated l}-\eqref{integrated u}.

 \textit{(iii)}: Since $l_n^\#\leq u_n^\#$ by \eqref{propagation of inequalities}, letting $n\to\infty$, we obtain
 \begin{equation}\label{l leq u}
 0\leq l_0^\#\leq l^\#(t)\leq u^\#(t)\leq u_0^\#,\quad\forall t\in[0,T).
 \end{equation}
 Subtracting \eqref{integrated l} from \eqref{integrated u} and using \eqref{l leq u} and the triangle inequality,  we obtain
 \begin{align} 
  |&u^\#(t)-l^\#(t)|\leq\int_0^t|G^\#(u,u,u)(\tau)-G^\#(l,l,l)(\tau)|+|L^\#(l,u,u)(\tau)-L^\#(u,l,l)(\tau)|\,d\tau.\label{integral bound on difference}
 \end{align}
 Let us estimate the right hand side of \eqref{integral bound on difference}.
  Recalling \eqref{G} 
 triangle inequality yields
 \begin{align}
 \int_0^t&|G^\#(u,u,u)(\tau)-G^\#(l,l,l)(\tau)|\,d\tau\leq\int_0^t|G_2^\#(u,u)(\tau)-G_2^\#(l,l)(\tau)|+|G_3^\#(u,u,u)(\tau)-G_3^\#(l,l,l)(\tau)|\,d\tau.\label{first triangle on gain}
 \end{align}
 Bilinearity of $G_2^\#$,  triangle inequality, bound \eqref{bound on binary loss} from Proposition \ref{bounds on operators proposition}, and the right hand side inequality of \eqref{l leq u} yield
 \begin{align}
 \int_0^t&|G_2^\#(u,u)(\tau)-G_2^\#(l,l)(\tau)|\,d\tau\leq \int_0^t|G_2^\#(u-l,u)(\tau)|+|G_2^\#(l,u-l)(\tau)|\,d\tau\nonumber\\
 &\leq K_\beta\alpha^{-1/2}M_{\alpha,\beta}\|u^\#-l^\#\|_{L^\infty([0,T)],\mathcal{M}_{\alpha,\beta})}\left(|||u^\#|||_{\infty}+|||l^\#|||_{\infty}\right)\nonumber\\
 &\leq 2K_\beta\alpha^{-1/2}M_{\alpha,\beta}\|u_0^\#\|_{\mathcal{M}_{\alpha,\beta}}|||u^\#-l^\#|||_{\infty}.\label{triangle on binary gain}
 \end{align}

 Trilinearity of $G_3^\#$, triangle inequality, bound  \eqref{bound on ternary loss} from Proposition \ref{bounds on operators proposition}, and the right hand side of \eqref{l leq u}  yield
\begin{align} 
 \int_0^t&|G_3^\#(u,u,u)(\tau)-G_3^\#(l,l,l)(\tau)|\,d\tau\leq \int_0^t|G_3^\#(u-l,u,u)(\tau)|+|G_3^\#(l,u-l,u)(\tau)|+|G_3^\#(l,l,u-l)(\tau)|\,d\tau\nonumber\\
&\leq K_\beta\alpha^{-1/2}M_{\alpha,\beta}|||u^\#-l^\#|||_{\infty}\left(|||u^\#|||^2_{\infty}+|||u^\#|||_{\infty}|||l^\#|||_{\infty}+|||l^\#|||^2_{\infty}\right)\nonumber\\
&\leq 3K_\beta\alpha^{-1/2}M_{\alpha,\beta}\|u_0^\#\|_{\mathcal{M}_{\alpha,\beta}}^2|||u^\#-l^\#|||_{\infty}\label{triangle on ternary gain}.
 \end{align}
 Then estimates \eqref{triangle on binary gain}-\eqref{triangle on ternary gain}   yield
 \begin{equation}\label{estimate on gain integral pre}
  \int_0^t|G^\#(u,u,u)(\tau)-G^\#(l,l,l)(\tau)|\,d\tau\leq 6K_\beta\alpha^{-1/2}M_{\alpha,\beta}(\|u_0^\#\|_{\mathcal{M}_{\alpha,\beta}}+\|u_0^\#\|_{\mathcal{M}_{\alpha,\beta}}^2)|||u^\#-l^\#|||_{\infty}.
 \end{equation}
  By a similar argument, using \eqref{bound on binary loss}, \eqref{bound on ternary loss} instead, we  also have
 \begin{equation}\label{estimate on loss integral pre}
\int_0^t |L^\#(l,u,u)(\tau)-L^\#(u,l,l)(\tau)|\,d\tau\leq 6K_\beta\alpha^{-1/2}M_{\alpha,\beta}(\|u_0^\#\|_{\mathcal{M}_{\alpha,\beta}}+\|u_0^\#\|_{\mathcal{M}_{\alpha,\beta}}^2)|||u^\#-l^\#|||_{\infty}.
 \end{equation}
 Combining \eqref{integral bound on difference}, \eqref{estimate on gain integral pre}-\eqref{estimate on loss integral pre}, we obtain
 \begin{equation*}
 |u^\#(t)-l^\#(t)|\leq  12K_\beta\alpha^{-1/2}M_{\alpha,\beta}(\|u_0^\#\|_{\mathcal{M}_{\alpha,\beta}}+\|u_0^\#\|_{\mathcal{M}_{\alpha,\beta}}^2)|||u^\#-l^\#|||_{\infty},\quad\forall t\in[0,T),
 \end{equation*}
 which is equivalent to 
\begin{equation}\label{equivalent norm}
 |||u^\#-l^\#|||_{\infty}\leq 12K_\beta\alpha^{-1/2}(\|u_0^\#\|_{\mathcal{M}_{\alpha,\beta}}+\|u_0^\#\|_{\mathcal{M}_{\alpha,\beta}}^2)|||u^\#-l^\#|||_{\infty}.
\end{equation}
Notice though that \eqref{smallness of u_0}-\eqref{lambda text} yield
\begin{equation*}
12K_\beta\alpha^{-1/2}(\|u_0^\#\|_{\mathcal{M}_{\alpha,\beta}}+\|u_0^\#\|_{\mathcal{M}_{\alpha,\beta}}^2)<1,
\end{equation*}
hence \eqref{equivalent norm} yields $u=l$.

 \textit{(iv)}: To prove existence, let us define $f$ by $f^\#:=l^\#=u^\#\in C^0([0,T),L^{1,+}_{x,v})\cap L^\infty([0,T),\mathcal{M}_{\alpha,\beta}^+)$.   Then,  either \eqref{integrated l} or \eqref{integrated u} implies
\begin{equation*}
f^\#(t)+\int_0^tL^\#(f,f,f)(\tau)\,d\tau=f_0+\int_0^tG^\#(f,f,f)(\tau)\,d\tau,\quad\forall t\in[0,T),
\end{equation*}
therefore
\begin{equation*}
\begin{cases}
\displaystyle\frac{\,d f^\#}{\,dt}+L^\#(f,f,f)=G^\#(f,f,f),\\
f^\#(0)=f_0.
\end{cases}
\end{equation*}
Recalling Definition \ref{boltzmann mild solution}, we conclude that $f$ is a mild solution to the binary-ternary  Boltzmann equation \eqref{generalized boltzmann equation} with initial data $f_0$. Bound \eqref{condition on uniqueness iteration} directly follows from \eqref{l leq u}.

Uniqueness of solutions satisfying \eqref{condition on uniqueness iteration} follows similarly to the proof of $(iii)$ using a bilinearity-trilinearity argument and Proposition \ref{bounds on operators proposition}. Clearly, condition \eqref{condition on uniqueness iteration} is needed to have a contraction.
\end{proof}
\section{Global well-posedness near vacuum}\label{sec: gwp}
In this final section, we prove the main result of this paper, stated in Theorem \ref{gwp theorem}, which gives global well-posedness of \eqref{generalized boltzmann equation} near vacuum in the interval $[0,T)$, where $0<T\leq\infty$. To prove this result we will rely on the time average bound of the gain term from Proposition \ref{bounds on operators proposition}.

\textbf{ Proof of Theorem \ref{gwp theorem}}
 
 Consider $f_0\in\mathcal{M}_{\alpha,\beta}^+$ satisfying \eqref{condition for existence} and let us define $l_0^\#=0$, $u_0^\#=C_{out}M_{\alpha,\beta}$, where
 \begin{equation}\label{C_out}
 C_{out}=\frac{1-\sqrt{1-48K_\beta\alpha^{-1/2}(1+\frac{\alpha^{1/4}}{2\sqrt{6K_\beta}})\|f_0\|_{\mathcal{M}_{\alpha,\beta}}}}{24K_\beta\alpha^{-1/2}\left(1+\frac{\alpha^{1/4}}{2\sqrt{6K_\beta}}\right)},
 \end{equation}
 $K_\beta$ is given by \eqref{constant wide K intro}.
 The reasoning behind defining $C_{out}$ will become clear in \eqref{equation of C_out}.
 Notice that due to \eqref{condition for existence} $u_0^\#$ is well defined. In order to conclude the proof, we will use Theorem \ref{iteration proposition}. Recalling 
 $$\lambda_{\alpha,\beta}=\min\left\{\frac{\alpha^{1/2}}{24K_\beta},\frac{\alpha^{1/4}}{2\sqrt{6K_\beta}}\right\},$$
  from \eqref{lambda text},  \eqref{C_out} and \eqref{condition for existence}  yield
 \begin{equation}\label{u_0<lam}
 \|u_0^\#\|_{\mathcal{M}_{\alpha,\beta}}=C_{out}<\lambda_{\alpha,\beta},
 \end{equation} 
 thus the conditions of Theorem  \ref{iteration proposition} are satisfied.
 By Theorem \ref{iteration proposition}, it suffices to prove that  the beginning condition \eqref{beginning condition} for the approximating sequences  generated  by $f_0\in\mathcal{M}_{\alpha,\beta}^+$ and the pair of functions $(l_0^\#,u_0^\#)\in\mathcal{M}_{\alpha,\beta}^+\times\mathcal{M}_{\alpha,\beta}^+$ is satisfied.
Indeed, by the iteration scheme \eqref{diffeq for loss n=1}, we have
\begin{equation*}
\begin{aligned}
&\frac{\,dl_1^\#}{\,dt}+l_1^\#R^\#(u_0,u_0)=0,\\
&\frac{\,du_1^\#}{\,dt}=G^\#(u_0,u_0,u_0),\\
&u_1^\#(0)=l_1^\#(0)=f_0,
\end{aligned}
\end{equation*}
 therefore, we obtain
 \begin{align}
 l_1^\#(t)&=f_0\exp\left(-\int_0^t R^\#(u_0,u_0)(\tau)\,d\tau\right),\quad t\in[0,T),\label{formula for l1}\\
 u_1^\#(t)&=f_0+\int_0^t G^\#(u_0,u_0,u_0)(\tau)\,d\tau,\quad t\in[0,T).\label{formula fo u1}
 \end{align}
 Since $u_0\geq 0$, formulas \eqref{formula for l1}-\eqref{formula fo u1} together with Proposition \ref{monotonicity proposition sharp} imply
\begin{equation}\label{first part of the claim} 
 0=l_0^\#\leq l_1^\#(t)\leq u_1^\#(t),\quad\forall t\in[0,T).
 \end{equation}
   It remains to prove that 
 \begin{equation}\label{u1 leq u0}
 u_1^\#(t)\leq u_0^\#,\quad\forall t\in[0,T).
 \end{equation}
   By representation \eqref{formula fo u1} and \eqref{bound on total loss} from Proposition \ref{bounds on operators proposition}, we obtain
\begin{align}
u_1^\#(t)&\leq \|f_0\|_{\mathcal{M}_{\alpha,\beta}}M_{\alpha,\beta}+K_\beta\alpha^{-1/2} M_{\alpha,\beta}\|u_0^\#\|^2_{\mathcal{M}_{\alpha,\beta}}(1+\|u_0^\#\|_{\mathcal{M}_{\alpha,\beta}})\nonumber\\
&\leq M_{\alpha,\beta}\left[\|f_0\|_{\mathcal{M}_{\alpha,\beta}}+K_\beta\alpha^{-1/2}\left(1+\frac{\alpha^{1/4}}{2\sqrt{6K_\beta}}\right) C_{out}^2\right],\label{use of assumption}
\end{align}
where to obtain \eqref{use of assumption}, we use the fact that $u_0^\#=C_{out}M_{\alpha,\beta}$ and \eqref{u_0<lam}. Recalling \eqref{C_out}, we notice that $C_{out}$ satisfies the equation
\begin{equation}\label{equation of C_out}
 \|f_0\|_{\mathcal{M}_{\alpha,\beta}}+12K_\beta\alpha^{-1/2}\left(1+\frac{\alpha^{1/4}}{2\sqrt{6K_\beta}}\right)C_{out}^2=C_{out},
 \end{equation}
 thus \eqref{use of assumption} implies
 \begin{equation*}
 u_1^\#(t)\leq C_{out}M_{\alpha,\beta}=u_0^\#,\quad\forall t\in[0,T).
 \end{equation*}
Estimate \eqref{u1 leq u0} is proved and the claim of Theorem \ref{gwp theorem} follows.

\end{document}